\newtheorem{thm}{Theorem}[section]
\newtheorem{prp}[thm]{Proposition}
\newtheorem{lma}[thm]{Lemma}
\newtheorem{proposition}[thm]{Proposition}
\newtheorem{lemma}[thm]{Lemma}
\theoremstyle{definition}
\theoremstyle{remark}
\newtheorem{remark}[thm]{Remark}
\newtheorem{rmk}[thm]{Remark}
\def\d{\partial}
\newcommand{\R}{{\mathbb{R}}}
\newcommand{\C}{{\mathbb{C}}}
\newcommand{\Z}{{\mathbb{Z}}}
\newcommand{\A}{{\mathcal{A}}}
\newcommand{\M}{{\mathcal{M}}}
\newcommand{\rk}{\operatorname{rank}}
\newcommand{\ev}{\operatorname{ev}}
\newcommand{\pa}{\partial}
\newcommand{\area}{\operatorname{area}}
\newcommand{\lk}{\operatorname{lk}}
\newcommand{\slk}{\operatorname{slk}}
\def\d{\partial}
\newcommand{\Aug}{\operatorname{Aug}}
\newcommand{\qAug}{\widehat{\operatorname{Aug}}}
\begin{document}

\title[Higher genus knot contact homology\dots]
{Higher genus knot contact homology and recursion for colored HOMFLY-PT polynomials}
\author{Tobias Ekholm}
\address{Department of mathematics, Uppsala University, Box 480, 751 06 Uppsala, Sweden;
Institut Mittag-Leffler, Aurav 17, 182 60 Djursholm, Sweden}
\email{tobias.ekholm\@@math.uu.se}
\author{Lenhard Ng}
\address{Mathematics Department, Duke University, Durham, NC 27708 USA}
\email{ng@math.duke.edu}
\thanks{TE is supported by the Knut and Alice Wallenberg Foundation and by the Swedish Research Council.
Parts of the paper are based upon work supported by the National Science Foundation under Grant No. DMS-1440140 while the author was in residence at the Mathematical Sciences Research Institute in Berkeley, California, during the 2018 spring  semester.}
\thanks{LN is partially supported by NSF grants DMS-1406371 and DMS-1707652.}

\begin{abstract}
We sketch a construction of Legendrian Symplectic Field Theory (SFT) for conormal tori of knots and links. Using large $N$ duality and Witten's connection between open Gromov--Witten invariants and Chern--Simons gauge theory, we relate the SFT of a link conormal to the colored HOMFLY-PT polynomials of the link. We present an argument that the HOMFLY-PT wave function is determined from SFT by induction on Euler characteristic, and also show how to, more directly, extract its recursion relation by elimination theory applied to finitely many noncommutative equations. The latter can be viewed as the higher genus counterpart of the relation between the augmentation variety and Gromov--Witten disk potentials established in \cite{AENV} by Aganagic, Vafa, and the authors, and, from this perspective, our results can be seen as an SFT approach to quantizing the augmentation variety. 
\end{abstract}

\maketitle

\section{Introduction}
We start this introduction by reviewing background material and then, in the light of this review, discuss the new results established in the paper. Before beginning, we want to make the important disclaimer that, from a strict mathematical rather than physical viewpoint, both some of the background results and the main constructions in this paper are not rigorously proved and should be considered as conjectures. We will indicate certain points throughout the paper where it is particularly true that rigorous mathematical steps are missing.

\subsection{Background}
Let $K\subset M$ be a link in an orientable 3-manifold $M$. The cotangent bundle $T^{\ast}M$ is a symplectic manifold with symplectic form $\omega=-d(p\,dq)$, where $p\,dq$ is the Liouville $1$-form naturally associated to $M$. The conormal $L_{K}$ of $K$ is the set of all covectors in $T^{\ast}M$ along $K$ that annihilate the tangent vectors to $K$ at the corresponding point in $M$. Then $L_{K}$ is a Lagrangian submanifold with one component for each component of $K$, each diffeomorphic to $S^{1}\times\R^{2}$.  The pair $(T^{\ast}M,L_{K})$ is non-compact but has ideal contact boundary $(ST^{\ast}M,\Lambda_{K})$, where $ST^{\ast}M$ denotes the spherical cotangent bundle, which can be represented as the unit conormal bundle of $T^{\ast}M$ with respect to some Riemannian metric with the contact form equal to the restriction of $p\,dq$, and where $\Lambda_{K}=ST^{\ast}M\cap L_{K}$ is a Legendrian torus (i.e., outside a compact subset, $(T^{\ast}M,L_{K})$ looks like $([0,\infty)\times ST^{\ast}M,[0,\infty)\times\Lambda_{K})$).

Seminal work of Witten \cite{Witten:1988hf} relates $U(N)$ Chern--Simons gauge theory in $M$, with insertion of the monodromy along the link $K\subset M$, to the colored HOMFLY-PT polynomial of $K$ and further to open topological string in $T^{\ast}M$ with $N$ branes along the 0-section $M$ and one brane along $L_{K}$. 

Let $M=S^{3}$ and $K=K_{1}\cup \dots \cup K_{k}$, where $K_{j}$ are the connected components of $K$. In this case, Ooguri--Vafa \cite{OV} found that the topological string in $T^{\ast}S^{3}$ with $N$ branes on $S^{3}$ and $1$ on each $L_{K_{j}}$ corresponds to open topological string in the resolved conifold $X$ (the total space of the bundle $\mathcal{O}(-1)^{\oplus 2}\to\C P^{1}$) with $1$ brane on each $L_{K_j}$ only (i.e., closing off all boundaries in the $N$ copies of the zero section by disks, see \cite{worldsheet}) provided $t=\area(\C P^{1})=Ng_{s}$, where $g_{s}$ denotes the string coupling constant. Here we abuse notation and use $L_{K_j}$ to denote a Lagrangian in $X$ that corresponds to the conormal $L_{K_j}$ in $T^{\ast}S^{3}$. The Lagrangian in $X$ is obtained by shifting the conormal in $L_{K_{j}}\subset T^{\ast}S^{3}$ off of the zero section along a closed 1-form defined in a tubular neighborhood of $K_{j}$ and dual to its tangent vector, see \cite{koshkin}.

For the purposes of this paper the outcome of these results is a relation between the colored HOMFLY-PT polynomials and open Gromov--Witten theory of $L_{K}\subset X$, see \cite{koshkin}, that takes the following form. Let $x=(x_{1},\dots,x_{k})$ and define the colored HOMFLY-PT \emph{wave function}
\[ 
\Psi_{K}(x)=\sum_{n=(n_{1},\dots,n_{k})} H_{K;n}(q,Q) e^{n\cdot x}
\] 
where $Q=q^{N}$ and $H_{K;n}(q,Q)$ is the $n$-colored HOMFLY-PT polynomial. Here $n$-colored means that the component $K_{j}$ is colored by the $n_{j}^{\rm th}$ symmetric representation of $U(N)$. 

We next consider the open Gromov--Witten potential. The relative homology $H_{2}(X,L_{K})$ is generated by classes $t$ corresponding to the generator of $H_{2}(X)$ and $x_{j}$ (well-defined up to adding multiples of $t$) corresponding to the generator of $H_{1}(L_{K_{j}})$. Let 
\[ 
F_{K}(x,Q,g_{s})= \sum_{\chi,r,n} C_{K;\chi,r,n}\, g_{s}^{-\chi} Q^{r}e^{n\cdot x}
\]
where $C_{K;\chi,r,n}$ counts (generalized) holomorphic curves of Euler characteristic $\chi$ that represent the relative homology class $rt+\sum_{j=1}^{k}n_{j}x_{j}\in H_{2}(X,L_{K})$. Then the Ooguri--Vafa result \cite{OV} says that
\[ 
\Psi_{K}(x)= e^{F_{K}(x)},\quad\text{ for }\quad q=e^{g_{s}},\; Q=e^{t}=e^{Ng_{s}},
\]
where the $e^{F_{K}(x)}$ can be interpreted as counting all disconnected curves.

In \cite{AV, AENV} the short wave asymptotics of the wave function $\Psi_{K}$ were related to knot contact homology. Knot contact homology is the homology of the Chekanov--Eliashberg dg-algebra $\A_{K}=CE(\Lambda_{K})$ of the unit conormal $\Lambda_{K}$ of a knot or link. This is a graded algebra over $\C[H_{2}(ST^{\ast}S^{3},\Lambda_{K})]$ (in degree $0$) generated by the Reeb chords $c$ of $\Lambda_{K}$, which in this case correspond to geodesics $\gamma$ in $S^{3}$ with endpoints on $K$ that are perpendicular to $K$ at their endpoints. One can position $\Lambda_K$ so that the grading $|c|$ of each Reeb chord $c$ coincides with the Morse index of the corresponding geodesic $\gamma$. Therefore, $\A_{K}$ is supported in degrees $\ge 0$. The differential $d\colon \A_{K}\to \A_{K}$ counts holomorphic disks $u\colon (D,\partial D)\to (\R\times ST^{\ast}S^{3},\R\times\Lambda_{K})$ with one positive and several negative punctures where the disk is asymptotic to Reeb chord strips. After choosing capping disks for the Reeb chords, such disks represent relative homology classes in $H_{2}(ST^{\ast}S^{3},\Lambda_{K})$. We pick generators for this homology group: $x_{j}$ and $p_{j}$ corresponding to the longitude and meridian in $H_{1}(\Lambda_{K_{j}})$ and $t$ corresponding to a generator of $H_{2}(ST^{\ast}_{\mathrm{pt}} S^{3})$, the second homology of the fiber. This way, $\A_{K}$ can be viewed as an algebra over the ring
\[ 
\C[e^{\pm x_{j}},e^{\pm p_{j}}, Q^{\pm 1}]_{j=1}^{k},\text{ where }Q=e^{t}.
\]    
The differential in $\A_{K}$ was computed explicitly from a braid presentation of $K$ in \cite{EENS}.

To describe the relation between open topological strings and knot contact homology we think of $Q\in\C^{\ast}$ as a deformation parameter and $\A_{K}$ as a family of algebras over the family of complex tori $(\C^{\ast})^{2k}\times\C^{\ast}$, where the coordinates in $(\C^{\ast})^{2k}$ correspond to $e^{\pm x_{j}}$ and $e^{\pm p_{j}}$, $j=1,\dots,k$. We consider the locus in the coefficient space $(\C^{\ast})^{2k}\times\C^{\ast}$ where $\A_{K}$ admits an augmentation; an augmentation is a unital chain map
\begin{equation}\label{eq:augmenation} 
\epsilon\colon \A_{K}\to\C,
\end{equation}
where $\C$ lies in degree $0$ and is equipped with the zero differential.
The closure of the highest-dimensional part of this locus is a variety $V_{K}\subset(\C^{\ast})^{2n}\times\C^{\ast}$ called the \emph{augmentation variety}. 

To see how $\A_{K}$ determines $V_{K}$, note that the differentials of the degree $1$ generators give a collection of polynomials $\{P_{c_{r}}(a_{1},\dots,a_{m})\}_{|c_{r}|=1}$, with coefficients in $\C[e^{\pm x_{j}},e^{\pm p_{j}},Q^{\pm 1}]_{j=1}^{k}$, in the degree $0$ generators $a_{1},\dots,a_{m}$. The chain map equation for $\epsilon$ reduces to $\epsilon\circ d=0$, and hence $V_{K}$ is the locus where the polynomials $\{P_{c_{r}}(a_{1},\dots,a_{m})\}_{|c_{r}|=1}$ have common roots, corresponding to $\epsilon(a_{j})$, $j=1,\dots,m$. The locus $V_{K}$ is thus an algebraic variety that can be determined by elimination theory. 

Consider the semi-classical approximation of the count $\Psi_{K}(x)$ of disconnected holomorphic curves in $X$ with boundary on $L_{K}$:
\[ 
\Psi_{K}(x) = \exp\left(g_{s}^{-1} F_{K;0}(x) + F_{K;1}(x) + \mathcal{O}(g_{s})\right).
\]       
From the open string expression for $\Psi_{K}$, we identify $F_{K;0}(x)=W_{K}(x)$ as the Gromov--Witten disk potential for the Lagrangian filling $L_{K}$, $F_{K;1}(x)$ the annulus potential, and so on for lower Euler characteristic. 

In \cite[Section 6.4]{AENV} it was argued that
\[ 
p_{j}=\frac{\partial W_{K}}{\partial x_{j}}, \quad j=1,\dots,k,
\]
gives a local parameterization of a branch of $V_{K}$. Hence $V_{K}$ is a complex Lagrangian variety with respect to the standard symplectic form $\sum_{j=1}^{k} dx_{j}\wedge dp_{j}$ on $(\C^{\ast})^{2k}$. In \cite[Section 8]{AENV}, the quantization of the augmentation variety $V_{K}$ was discussed from a physical point of view. Here $V_{K}$ appears as the characteristic variety for a $D$-module. The $D$-module arises from the ``D-model'', which is the A-model topological string in $(\C^{\ast})^{2n}$ with a space filling coisotropic brane and a Lagrangian brane on $V_{K}$. This theory has a unique ground state, leading to a wave function on $V_{K}$ that then generates the $D$-module. 

In this paper we discuss the quantization of $V_{K}$ from the point of view of A-model topological string in $X$ with one Lagrangian brane on $L_{K}$, i.e., the open Gromov--Witten potential of $L_{K}$. We study 
in particular how to determine the corresponding wave function and $D$-module in terms of the Symplectic Field Theory (SFT) of $\Lambda_{K}$, i.e., the holomorphic curves with boundary on $L_{K}$ at infinity in $X$. 

\subsection{Legendrian SFT}
\label{ssec:lsft}
In \cite{EGH}, a framework for holomorphic curve theories in symplectic cobordisms called Symplectic Field Theory (SFT) was introduced. Here the holomorphic curves are asymptotic to closed Reeb orbits in the closed string case and to Reeb chords in the open string case. For closed strings the full version of SFT including curves of all genera was constructed, but in the open string case the construction stopped at the most basic level of SFT, the Chekanov--Eliashberg dg-algebra discussed above. The obstructions to a higher genus generalization of $CE$ for open strings are related to the codimension $1$ boundary in the moduli space of holomorphic curves that corresponds to so-called boundary bubbling. 

Very briefly, the algebraic structures of closed string SFT come from identifying the boundary of a 1-dimensional moduli space of holomorphic curves in a sympletic cobordism as two-level holomorphic curves with a $0$-dimensional curve in the cobordism and a 1-dimensional $\R$-invariant curve in the symplectization either above or below, see Figure \ref{fig:closedsft}. Here interior bubbling, as in Figure \ref{fig:closedbubble}, has codimension $2$ and carries no homological information and can hence be neglected. In open string SFT one considers holomorphic curves with boundary on a Lagrangian submanifold, and here two-level curves do not account for the whole codimension $1$ boundary. Boundary bubbling has codimension $1$ and cannot be disregraded, see Figure \ref{fig:boundarybubble}.

\begin{figure}[htp]
	\centering
	\includegraphics[width=.35\linewidth]{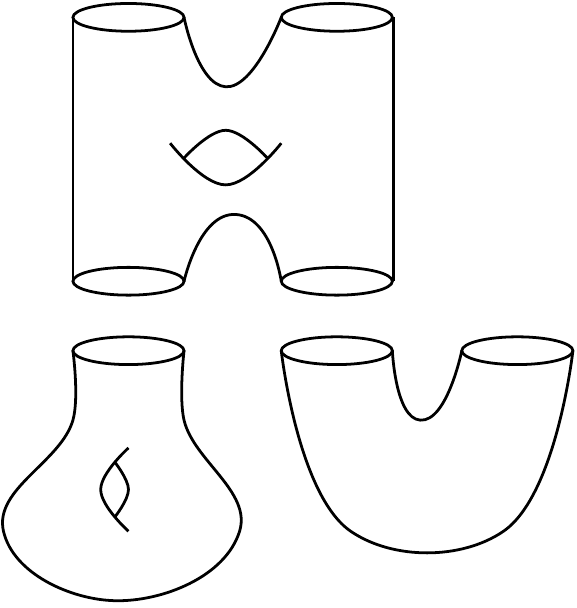}
	\caption{Two-level curves in closed string SFT. The top level is in the symplectization, the bottom in the cobordism.}
	\label{fig:closedsft}
\end{figure}

\begin{figure}[htp]
	\centering
	\includegraphics[width=.6\linewidth]{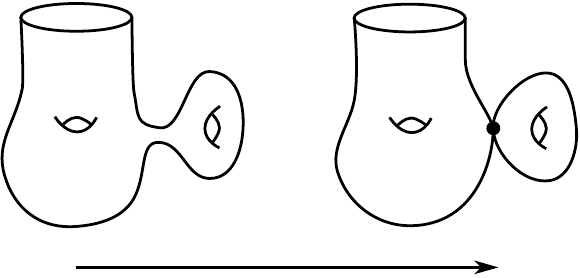}
	\caption{Interior bubbling has codimension two.}
	\label{fig:closedbubble}
\end{figure}

\begin{figure}[htp]
\centering
	\includegraphics[width=.6\linewidth]{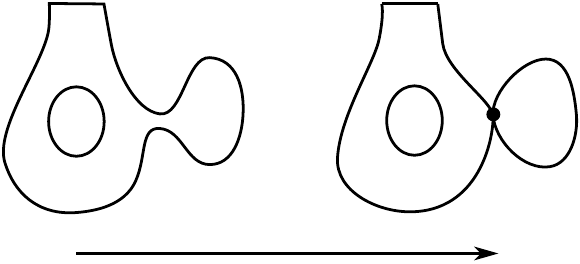}
	\caption{Boundary bubbling has codimension one.}
	\label{fig:boundarybubble}
\end{figure}

Partial generalizations of the Chekanov--Eliashberg dg-algebra are known in the open string case. More specifically, flavors of so-called rational SFT, incorporating disks with several positive punctures, were considered in \cite{Ng_rstf, Ekholm_rsft}. 

The main tool in this paper is a higher genus generalization of knot contact homology. More precisely, we sketch a construction of a holomorphic curve theory that includes curves of arbitrary genus for the conormal of a link $L_{K}\subset X$. The structure of the theory is analogous to SFT. We call it Legendrian SFT and use notation analogous to that in \cite{EGH} in the closed string case. 

Complete proofs of the main SFT equation require the use of an abstract perturbation scheme for holomorphic curves in combination with the extra structures that we introduce here (closely related to bounding cochains in Floer cohomology as introduced by Fukaya--Oh--Ohta--Ono \cite{FO3}). Perturbation schemes such as Kuransihi structures \cite{FO3}, polyfolds \cite{hofer1,hofer2}, or algebraic topologically defined virtual fundamental cycles \cite{pardon} give a framework for defining solution spaces, which then must have required properties with respect to the extra structures. Existence of suitable perturbation schemes in combination with geometric data related to the bounding cochains of \cite{FO3} and similar to that considered here was studied in the setting of Kuranishi structures in a series of works by Iacovino, see \cite{iacovino1, iacovino2, iacovino3, iacovino4}.  Here we will not discuss technical details of perturbation schemes for direct calculations. Rather, we focus on explaining how to add extra geometric data and how to define generalized holomorphic curves that allow us to remove boundary splitting from the moduli space boundary in such a way that the curve counts needed to extract information from Legendrian SFT become accessible. We study simple examples in detail computing the theory directly in a combinatorial way from a braid presentation. After the preparation of this paper, Ekholm and Shende gave an approach to curve counting in \cite{ESh} which we expect can be adapted to the SFT setting here, see Remark \ref{rmk:parallelchains}.

We next explain the structure of Legendrian SFT. The theory is defined in terms of what we call generalized holomorphic curves, defined in terms of ordinary holomorphic curves and additional geometric data. For details of the definition we refer to Section \ref{sec:gencurves}. 
Consider $\R\times\Lambda_{K}\subset \R\times ST^{\ast}S^{3}$. The main object is the \emph{Hamiltonian}, which counts rigid generalized holomorphic curves with arbitrary positive and negative punctures with boundary on $\R\times\Lambda_K$. Here we have an $\R$-invariant almost complex structure leading to an $\R$ action on the space of holomorphic curves, and ``rigid'' means that the relevant moduli space is $0$-dimensional once we mod out by the $\R$ action. 

To organize the count in the Hamiltonian, we associate to the Reeb chords $c_j$ of $\Lambda_K$ formal variables $c_{j}$ and dual operators $\partial_{c_{j}}$ such that
\[ 
\partial_{c_{j}}(c_{k})=\delta_{jk}, 
\] 
and such that the operators satisfy the usual graded sign commutative rules. For a word $\mathbf{c}=c_{1}\cdots c_{m}$ of Reeb chords we write $\ell(\mathbf{c})=m$ for its length, and  $\partial_{\mathbf{c}}=\partial_{c_{m}}\cdots\partial_{c_{1}}$ for the corresponding dual word of operators.

Write $\mathbf{H}_{K}$ for the generating function of rigid holomorphic curves with arbitrary positive and negative punctures with boundary on $\R\times\Lambda_{K}$:
\[ 
\mathbf{H}_{K}= \sum_{\chi,k,m,r,\mathbf{c}^{+},\mathbf{c}^{-}} H_{K;\chi,n,m,r,\mathbf{c}^{+},\mathbf{c}^{-}}\, g_s^{-\chi+\ell(\mathbf{c}^{+})}e^{n\cdot x}e^{m\cdot p}Q^{r}\,\mathbf{c}^{+}\partial_{\mathbf{c}^{-}}.
\]
Here the sum ranges over all words of Reeb chord asymptotics such that $|\mathbf{c}^{+}|-|\mathbf{c}^{-}|=1$ and $H_{K;\chi,k,m,r,\mathbf{c}^{+},\mathbf{c}^{-}}$ counts the number of rigid curves with positive asymptotics at $\mathbf{c}^{+}$ and negative asymptotics at $\mathbf{c}^{-}$, with Euler characteristic $\chi$, and in relative homology class $n\cdot x+m\cdot p+rt$, where $(x,p)=(x_{1},p_{1},\dots,x_{k},p_{k})$ is a basis in $H_{1}(\Lambda_{K})$ and $t$ a generator in $H_{2}(X)$, as above. 

We will use only a small piece of the algebraic structure of SFT and restrict attention to the most basic moduli spaces corresponding to a certain part of the Hamiltonian. More precisely, if $c$ is a Reeb chord of grading $|c|=1$, we write $\mathbf{H}^{c}_{K}$ for the sum of all terms in $\mathbf{H}_{K}$ with 
\[ 
\mathbf{c}^{+}= c\mathbf{a},
\]
where $\mathbf{a}$ is a word of degree 0 chords. Note that this forces $\mathbf{c}^{-}=\mathbf{a}'$, where also $\mathbf{a}'$ is a word of degree 0 generators.

We write $\mathbf{F}_{K}$ for the SFT-potential, the generating function of rigid curves with positive punctures and boundary on $L_{K}$:
\[ 
\mathbf{F}_{K}= \sum_{\chi,k,r,\mathbf{a}^{+}} F_{K;\chi,k,r,\mathbf{a}^{+}}\, g_s^{-\chi+\ell(\mathbf{a}^{+})}e^{kx}Q^{r}\,\mathbf{a}^{+},
\]
where the sum ranges over all Reeb chord words $\mathbf{a}$ with $|\mathbf{a}|=0$. The key equation of SFT that we will be using here takes the form
\begin{equation}\label{eq:masterconnected} 
e^{-\mathbf{F}_{K}} \ \mathbf{H}_{K}|_{p_{j}=g_{s}\frac{\partial}{\partial x_{j}}} \ e^{\mathbf{F}_{K}}=0, 
\end{equation}
and expresses the fact that the boundary of a compact 1-manifold has algebraically $0$ points, as follows. The exponential counts all disconnected curves and the negative exponential removes additonal 0-dimensional curves not connected to the 1-dimensional piece. The differential operators $\partial_{a_{j}}$ in $\mathbf{H}_{K}$ act on Reeb chords and the substitutions $p_{j}=g_{s}\frac{\partial}{\partial x_{j}}$, $j=1,\dots,k$ have the enumerative meaning of attaching curves along bounding chains intersecting the boundary of the 1-dimensional curves at infinity. We point out that the latter also gives a direct enumerative interpretation of the standard quantization scheme where $\hat x_{j}$ is a multiplication operator and $\hat p_{j}=g_{s}\frac{\partial}{\partial x_{j}}$. Note also that \eqref{eq:masterconnected} implies the simpler equation
\begin{equation}\label{eq:master} 
\mathbf{H}_{K}|_{p_{j}=\frac{\partial}{\partial x_{j}}} \ e^{\mathbf{F}_{K}}=0.
\end{equation}
We will use both forms.

Equation \eqref{eq:master} is the quantized version of the chain map equation that relates the knot contact homology and the Gromov--Witten potential. We show that in basic examples, applying elimination theory in this noncommutative setting, we get a quantization 
\[ 
\hat A_{K;j}(e^{\hat x},e^{\hat p},Q)=0,\; j=1,\dots, s
\]
of the augmentation variety (which corresponds to the commutative $g_s=0$ limit of the defining equations for $V_{K}$ discussed above) such that
\[ 
\hat A_{K;j} \Psi_{K}=0,\; j=1,\dots,s.
\]
In these simple examples we also verify that $\hat A_{K;j}$ agree with the recursion relation for the colored HOMFLY-PT polynomial exactly as expected from large $N$ duality. We point out that our conjectural definition of SFT includes also a definition of the open Gromov--Witten invariant of $L_{K}$, see \cite{iacovino3} for similar results.

\begin{rmk}
In our construction of Legendrian SFT, we focus on the important special case of Lagrangian conormal fillings $L_{K}\subset X$ of $\Lambda_{K}$. Our construction utilizes the topology of the conormal $L_{K}\approx S^{1}\times\R^{2}$. Similar more involved constructions likely work for Lagrangian fillings of $\Lambda_{K}$ of more complicated topology. We leave possible generalizations to future work. 	
\end{rmk}

\subsection{Recursive formulas}
We also give a direct recursive calculation of the wave function $\Psi_{K}$, showing how to compute it genus by genus from data of holomorphic curves at infinity. In fact we further show that it is possible to choose an almost complex structure so that all relevant holomorphic curves at infinity have the topology of the disk. The main underlying principle of this recursion is a calculation of the linearized Legendrian contact homology (a linearized version of the Chekanov--Eliashberg dg-algebra) at a general point of the augmentation variety (where the linearized homology can be identified with the tangent space). We point out that that our recursion takes place in the A-model only (unlike so-called topological recursion, which is a B-model calculation). In fact, the first step of our recursion gives what is called the annulus kernel for general knots, a central ingredient in topological recursion.

\subsection{Augmentation varieties for knots in more general 3-manifolds}
We also consider analogues of the relation derived for the large $N$ dual of knots in the 3-sphere for knots in more general 3-manifolds. We show in particular that the expected connection between K\"ahler classes of the large $N$ dual and free homotopy classes of loops in the 3-manifold appears in knot contact homology in the coefficient ring. Here the coefficient ring is the orbit contact homology which in degree 0 can be shown to be generated by the free homotopy classes. 

\subsection{Examples}
In the last section of the paper we illustrate our results by working out a number of examples in 
detail. More precisely, we study the unknot, the Hopf link, and the trefoil in $S^{3}$, and the line $\R P^{1}$ in $\R P^{3}$.

\section{The SFT potential for conormals of links}
In this section we outline a definition of the open Gromov--Witten potential, or in the language of this paper the SFT potential, for conormals $\Lambda_{K}\subset ST^{\ast} S^{3}$ of links, filled by Lagrangian conormals $L_{K}$ in the resolved conifold $X$. (We get $T^{\ast}S^{3}$ as a special case when the area of the sphere in $X$ is set to zero.) The construction is phrased in terms of a special Morse function on $L_{K}$ and an additonal relative 4-chain $C_{K}$ with $\partial C_{K}=2\cdot L_{K}$. 

The section is organized as follows. In Section \ref{s:adddata} we describe the additional data we use in our main construction. In Section \ref{sec:boundingchains} we describe how to use the data to construct a version of bounding chains for holomorphic curves as in \cite{FO3}. In Section \ref{sec:gencurves} we define generalized holomorphic curves that are counted in the Gromov--Witten potential and in Section \ref{sec:sftpotential} we then define the SFT potential.

\subsection{Additional data for moduli spaces}\label{s:adddata}
Consider the conormal Lagrangian $L_{K}\subset X$, of a link $K=K_{1}\cup\dots\cup K_{k}$. We view the resolved conifold as a symplectic manifold with an (asymptotic) cylindrical end, the symplectization $[0,\infty)\times ST^{\ast} S^{3}$ of the unit cotangent bundle of $S^{3}$. Likewise we view the conormal as having (asymptotic) cylindrical end $[0,\infty)\times \Lambda_{K}\subset ST^{\ast}S^{3}$.

In \cite{AENV} the main relation between the augmentation variety and the Gromov--Witten disk potential (as well as the definition of the disk potential itself) was obtained using so-called bounding chains: non-compact 2-chains in $L_{K}$ that interpolate between boundaries of holomorphic disks and a multiple of a fixed longitude curve in $\Lambda_{K}$ at infinity. Here we need similar bounding chains for curves of arbitrary Euler characteristic. The main difference from the case of disks is the following: instances in a 1-parameter family of curves when the boundary of the curve self-intersects can be disregarded for disks but for higher genus curves the holomorphic curve with self-intersection can be glued to itself creating a curve of Euler characteristic $1$ below that of the original curve. Because of this phenomenon, it is not sufficient to use bounding chains only for rigid curves, as in the disk case. We deal with this by constructing dynamical bounding chains that move continuously with holomorphic curves varying in a 1-parameter family. 

\subsubsection{An additional Morse function on conormals}
Consider a Morse function $f\colon L_K\to\R$ 
of the following form:
\begin{itemize}
\item The critical points of $f$ lie on $K_{j}$ and are: a minimum (index $0$ critical point) $\kappa_{0}^{j}$ and an index  $1$ critical point $\kappa_{1}^{j}$ (in particular, $f$ has no maxima). 
\item The flow lines of $\nabla f$ connecting $\kappa^{0}_{j}$ to $\kappa^{1}_{j}$ lie in $K_{j}$.
\item Outside a small neighborhood of $K_{j}$, the function $f$ is radial and $\nabla f$ is the radial vector field along the fiber disks in $L_{K_{j}}\approx K_{j}\times\R^{2}$. 
\end{itemize}
See Figure \ref{fig:morsefunction}. Note that the unstable manifold $W^{\rm u}(\kappa^{1}_{j})$ of $\kappa^{1}_{j}$ is a disk that intersects $\Lambda_{K_{j}}$ in the meridian cycle $p_{j}$.

\begin{figure}[htp]
\labellist
\small\hair 2pt
\pinlabel $\kappa^{1}$ at 7 53
\pinlabel $\kappa^{0}$ at 106 6
\pinlabel $W^{\rm u}(\kappa^{1})$ at -10 60
\endlabellist
\centering
\includegraphics[width=.5\linewidth]{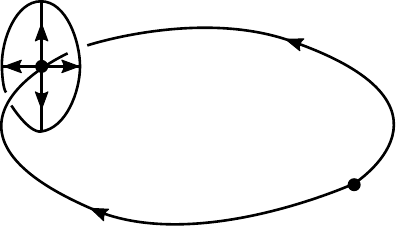}
\caption{The gradient flow of the Morse function $f\colon L_K\to\R$.}
\label{fig:morsefunction}
\end{figure}

\subsubsection{A 4-chain with boundary  
$2\cdot L_{K}$}
The Gromov--Witten potential will be defined using holomorphic curves with boundaries in general position with respect to the gradient vector field $\nabla f$. In 1-parameter families there are isolated instances when the boundaries become tangent to $\nabla f$. To keep curve counts invariant as we cross such instances we will use a certain 4-chain that we describe next. Our construction here was inspired by the study of self linking of real algebraic links (Viro's encomplexed writhe, \cite{viro}) from the point of view taken in \cite{shade}.

We first consider the topology of $X-L_{K}$. Let $N(L_{K})$ denote a small tubular neighborhood of $L_{K}$ and let $\partial N(L_{K})$ denote its boundary. Represent $K=K_{1}\cup\dots\cup K_{k}$ as a braid around the unknot and consider a point $q_{j}$, $j=1,\dots,k$ on each component of the link where the tangent line does not intersect the link except at the point of contact. Let $P_{j}\approx (0,\infty)\times\R^{2}$ denote the union of all parallel transports of the fiber of $L_{K_{j}}$ at the outer points along the half ray tangent to the knot at $q_{j}$.

\begin{lma}\label{l:homologydual}
The homology group $H_{3}(X-L_{K})$ equals $\Z^k$ and is generated by $\partial N(L_{K_{j}})|_{K_{j}}\approx K_{j}\times S^{2}$, $j=1,\dots,k$, and $P_{j}$ is a Poincar\'e dual of $K_{j}\times S^{2}$.  
\end{lma}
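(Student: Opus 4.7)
The plan is to combine a Mayer--Vietoris calculation applied to the decomposition $X = N(L_K) \cup (X\setminus L_K)$ with an explicit transverse intersection count, the latter handling the Poincar\'e duality claim.

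The first step is to identify the homotopy types of the pieces. Each component $L_{K_j}\approx S^1\times\R^2$ retracts to its core $K_j\approx S^1$, so $N(L_K)\simeq L_K\simeq \sqcup_j S^1$. The intersection $N(L_K)\setminus L_K$ deformation retracts onto the unit sphere bundle $\partial N(L_K)$ of the rank-$3$ normal bundle; since any orientable rank-$3$ bundle over $S^1$ is trivial, $\partial N(L_K)\simeq \sqcup_j(K_j\times S^2)$. Finally $X\simeq \C P^1$, so $H_\ast(X)$ is concentrated in degrees $0$ and $2$ with value $\Z$. Plugging all this into the Mayer--Vietoris sequence,
\[
H_4(X)\to H_3(\partial N(L_K))\to H_3(L_K)\oplus H_3(X\setminus L_K)\to H_3(X),
\]
and using $H_3(L_K)=H_3(X)=H_4(X)=0$ together with $H_3(\partial N(L_K))=\Z^k$ generated by the fundamental classes $[K_j\times S^2]$, immediately yields $H_3(X\setminus L_K)\cong \Z^k$ with the asserted generators.

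The second step is the duality statement. Since $X\setminus L_K$ is an oriented open $6$-manifold, Poincar\'e duality $H_3^{BM}(X\setminus L_K)\cong H^3(X\setminus L_K)$ realises the pairing with $H_3(X\setminus L_K)$ as transverse intersection number. Each $P_j\approx(0,\infty)\times\R^2$ is closed and proper in $X\setminus L_K$ and so represents a class in $H_3^{BM}(X\setminus L_K)$, and I need to compute $P_j\cdot(K_i\times S^2)$. For $i\neq j$, the genericity requirement that the tangent line at $q_j$ misses the link, combined with choosing $N(L_K)$ thin enough, makes $P_j$ disjoint from $K_i\times S^2$. For $i=j$ I would work in a local model at $q_j$: as $t>0$ moves along the tangent ray, the basepoint $q_j+tv$ drifts away from $K_j$ quadratically thanks to the curvature of $K_j$, so the parallel-transported fiber $F_t$ meets the $\e$-normal sphere bundle $\partial N(L_{K_j})|_{K_j}$ transversely at a single point with $t\sim\sqrt{\e}$, contributing $+1$ with an appropriate orientation convention.

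The main anticipated obstacle is this last transverse intersection computation: producing explicit descriptions of $T_\ast P_j$ and $T_\ast(\partial N(L_{K_j})|_{K_j})$ at the intersection point, verifying that together they span $TX$ (hence transversality), and pinning down signs. A secondary subtlety is to check that $P_j$ really lies inside $X\setminus L_K$ for all $t>0$; this combines nonvanishing curvature of $K_j$ at $q_j$ (so the tangent ray immediately peels off the link) with the global genericity assumption that the tangent ray does not re-meet $K$ elsewhere, and a dimension count showing that a $2$-plane in a $6$-manifold generically avoids a $3$-manifold. Everything else reduces to bookkeeping in the Mayer--Vietoris sequence.
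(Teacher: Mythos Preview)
Your Mayer--Vietoris computation is correct and is essentially the paper's argument: the paper phrases it as excision plus the long exact sequence of the pair $(X, N(L_K))$, but both routes amount to showing that inclusion induces $H_3(\partial N(L_K)) \xrightarrow{\sim} H_3(X \setminus L_K)$.

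Two small corrections to the subtleties you flag in the Poincar\'e-duality half (which the paper itself leaves implicit). First, the curvature hypothesis is not needed and is not assumed in the paper: even if $\kappa(q_j)=0$, the tangent ray leaves $K_j$ immediately (just to higher order), and the single transverse crossing of $\partial N(L_{K_j})|_{K_j}$ persists. Second, your ``$2$-plane versus $3$-manifold in a $6$-manifold'' dimension count is off, since $P_j$ is $3$-dimensional, and two $3$-folds in a $6$-fold generically do meet in points; fortunately the count is also unnecessary, because $P_j$ and $L_K$ sit over the tangent ray and over $K$ respectively in the base $S^3$, and these are disjoint for $t>0$ by hypothesis, so $P_j\cap L_K=\varnothing$ outright.
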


\begin{proof}
By homotopy $H_{3}(X-L_{K})\approx H_{3}(X-N(L_{K}))$. Using excision and the long exact sequence for $H_{\ast}(X,N(L_{K}))$ one finds that the map $H_{3}(\partial N(L_{K}))\to H_{3}(X-L_{K})$ induced by inclusion is an isomorphism. 
\end{proof}

We next construct an $\R$-invariant 4-chain at infinity. Outside a compact set, the gradient $\nabla f$  of the Morse function $f\colon L_{K}\to\R$ agrees with the radial vector field in $T^{\ast}S^{3}$ and hence $J\nabla f=R$, where $R$ is the Reeb vector field in $ST^{\ast}S^{3}$. We call a smooth chain $\sigma\subset ST^{\ast}S^{3}$ \emph{regular at the boundary} if the boundary $\partial\sigma$ is a smooth submanifold and if $\sigma$ in a neighborhood of $\partial\sigma$ agrees with an embedding of $\partial\sigma\times[0,\epsilon)$. For regular chains we define the inward normal vector field along $\partial\sigma$ as $\frac{\partial\sigma}{\partial t}|_{\partial\sigma\times\{0\}}$, where $t$ is the standard coordinate on $[0,\epsilon)$. 

We define two smooth 3-chains in $ST^{\ast} S^{3}$ with regular boundary $\Lambda_{K}$ and inward normals $\pm R$ as follows. Consider the Reeb flow lines parameterized by $[0,\epsilon]$ starting at $(q,p)\in\Lambda_{K}$. These flow lines project to geodesic arcs of length $\epsilon$ in $S^{3}$ starting at $q\in K$ and perpendicular to $K$ at the start point. The union of the flow lines is an embedded copy $F_{\epsilon}$ of $[0,\epsilon]\times\Lambda_{K}$ in $ST^{\ast} S^{3}$. Consider its boundary component $\partial_{\epsilon} F$ corresponding to the flow line endpoints at $\epsilon$. The union of tori $\partial_{\epsilon} F$ consists of the union of the lifts of the boundary circles of the  geodesic disks perpendicular to $K$ at $q\in K$. The lift of such a boundary circle projects to a curve in the unit cotangent fiber at $q$ which is close to the great circle perpendicular to the tangent vector of $K$. The great circle bounds two hemispheres, one containing the unit tangent of the knot and one containing its negative. Using these hemispheres over every point in $K$, we construct two unions of solid tori $G_{\pm}$ filling $\partial_{\epsilon}F$. Define 
\[ 
C_{K;\pm}^{\infty}= F_{\epsilon}\cup G_{\pm}.
\]

To further explain this construction we consider the following local model which approximates any knot up to first order. If $K$ corresponds to the $x_{1}$-axis in a neighborhood of $0\in \R^{3}$ with coordinates $(x_{1},x_{2},x_{3})$ then $F_{\epsilon}$, with coordinates $(x,y)=(x_{1},x_{2},x_{3},y_{1},y_{2},y_{3})$ in $T^{\ast}S^{3}$, is the subset
\[ 
F_{\epsilon}=\left\{(x,y)=(x_{1},s\cos\theta,s\sin\theta,0,\cos\theta,\sin\theta)\right\},\quad 0\le s\le\epsilon,\;0\le\theta\le 2\pi.
\]
The subsets $G_{\pm}$ are then
\begin{align*} 
G_{\pm}=&\left\{(x,y)=(x_{1},\rho\epsilon\cos\theta,\rho\epsilon\sin\theta,\pm\sqrt{1-\rho^{2}},\rho\cos\theta,\rho\sin\theta)\right\},\\
&\quad 0\le \rho\le 1,\;0\le \theta\le 2\pi.
\end{align*}

\begin{lma}
If $C_{K;\pm}^{\infty}$ is the 3-chain constructed above then $C_{K;\pm}^{\infty}$ is a smooth 3-chain with regular boundary $L_{K}$ and inward normal $\pm R$. Furthermore $C_{K,\pm}^{\infty}$ meets $L_{K}$ along the boundary only.
\end{lma}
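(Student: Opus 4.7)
The plan is to verify each of the claimed properties --- smoothness, regular boundary $\Lambda_K$, inward normal $\pm R$, and intersection with $L_K$ only along the boundary --- by analyzing the two pieces $F_\epsilon$ and $G_\pm$ of $C_{K;\pm}^\infty = F_\epsilon \cup G_\pm$ separately and then combining them along $\partial_\epsilon F$. (I read ``$L_K$'' in the boundary claim as $\Lambda_K$, i.e., the ideal boundary of $L_K$, since the ambient chain is in $ST^{\ast}S^3$.)

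First I would show that $F_\epsilon$ is a smoothly embedded 3-manifold with boundary. The Reeb flow map $\Phi\colon[0,\epsilon]\times\Lambda_K\to ST^{\ast}S^3$, $(s,\lambda)\mapsto\phi_R^s(\lambda)$, is a smooth embedding whenever $\epsilon$ is smaller than the length of the shortest Reeb chord of $\Lambda_K$, which is bounded below by a positive constant by compactness. So $F_\epsilon$ is a smooth embedded 3-manifold with boundary $\Lambda_K\sqcup\partial_\epsilon F$, and near $\Lambda_K$ it is the embedded collar $[0,\epsilon)\times\Lambda_K$, providing both the regular-boundary condition at $\Lambda_K$ and the inward normal $\partial_s|_{s=0}=R$. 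The $-R$ variant is obtained by running the same construction with the backward Reeb flow on $[-\epsilon,0]\times\Lambda_K$.

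Next I would verify that $G_\pm$ is a smoothly embedded solid torus whose boundary matches $\partial_\epsilon F\subset\partial F_\epsilon$ with opposite orientation, so that $\partial C_{K;\pm}^\infty=\Lambda_K$. The naive $(\rho,\theta)$ parameterization in the local model is singular at $\rho=0$ (where the $\theta$-circle collapses to the core) and at $\rho=1$ (where $y_1=\pm\sqrt{1-\rho^2}$ has infinite derivative in $\rho$), so smoothness has to be checked in better coordinates. The right move is to use intrinsic spherical coordinates $(\phi,\theta)$ on the closed $\pm$ hemisphere of the cotangent fiber $S^2$, with $\phi=0$ at the pole and $\phi=\pi/2$ at the equator, and observe that the map into $\R^2\times S^2$ given by $\epsilon$-scaling of the equatorial projection is a smooth embedding of a closed 2-disk. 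This extends globally over $K$ to a smoothly embedded solid torus provided $\epsilon$ is also smaller than the injectivity radius of $S^3$ along $K$. The resulting $C_{K;\pm}^\infty$ is then a piecewise smooth 3-chain: smooth on each simplex, with an interior ridge along $\partial_\epsilon F$ where the tangent spaces of $F_\epsilon$ and $G_\pm$ genuinely differ, but with no defect in the collar at $\Lambda_K$.

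Finally I would verify $C_{K;\pm}^\infty\cap L_K=\Lambda_K$. Because $C_{K;\pm}^\infty\subset ST^{\ast}S^3$ and $L_K\cap ST^{\ast}S^3=\Lambda_K$, this reduces to $C_{K;\pm}^\infty\cap\Lambda_K=\Lambda_K$. For $F_\epsilon$ the no-Reeb-chord condition gives $F_\epsilon\cap\Lambda_K=\Lambda_K$, since no Reeb flow starting in $\Lambda_K$ returns to $\Lambda_K$ in time at most $\epsilon$. For $G_\pm$ the local model shows that hitting $\{y_1=0\}$ forces $\rho=1$, and there the base point sits at distance $\epsilon>0$ from $K$ and is thus off $\Lambda_K$; the injectivity-radius bound promotes this to the global statement $G_\pm\cap\Lambda_K=\varnothing$. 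The main obstacle is the honest smoothness verification for $G_\pm$, since the parameterization displayed in the text is coordinate-singular at both ends of the $\rho$-interval; once it is recast in intrinsic hemisphere coordinates, the remaining assertions follow by direct inspection from the construction, with ``smooth 3-chain'' understood in the piecewise-smooth-simplex sense.
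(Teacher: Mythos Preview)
Your proposal is correct and takes the same approach as the paper, which simply states ``Immediate from the construction''; you have supplied the verification that the paper leaves to the reader, including the appropriate reading of $L_K$ as $\Lambda_K$ in this $ST^{\ast}S^{3}$ context and the observation that the displayed $(\rho,\theta)$ parameterization of $G_{\pm}$ has removable coordinate singularities at $\rho=0,1$. Your remark that the gluing along $\partial_{\epsilon}F$ yields only a piecewise-smooth chain (with a ridge there) is also accurate and compatible with the paper's notion of ``smooth chain with regular boundary,'' which only demands the collar structure near $\Lambda_K$.
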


\begin{proof}
Immediate from the construction.
\end{proof}

Let $v$ be the vector field defined outside the critical points of $f$ by $v(q)=\frac{1}{|\nabla f|}\nabla f$, $q\in L_{K}-\{\kappa_{j}^{0},\kappa_{j}^{1}\}_{j=1}^{k}$. Consider the subsets of $N(L_{K})$ given by the closures $G_{\pm v}$ of
\[ 
G_{\pm v}^{0}=\{(q,\pm tJv)\in N(L_{K})\colon 0\le t\le 1\}.
\]
Let $G=G_{v}\cup G_{-v}$. 

\begin{lma}
The union $G$ is a regular chain with boundary $2\cdot L_{K}\cup G'$ and inward normals $\pm J\nabla f$ along $2\cdot L_{K}$. Furthermore, if $G'$ denotes the boundary component of $G$ that does not lie in $L_K$, then $G'$
 is an embedding of two copies of $L_{K}$ joined by two $1$-handles for each component of $K$, and $G'\cup C_{K}^{\infty}$ is null homologous in $X-L_{K}$. 
\end{lma}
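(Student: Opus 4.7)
The plan is to verify the three assertions in turn.

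First I would parameterize $G_{\pm v}^{0}$ by the smooth embedding
\[
\phi_{\pm}\colon (L_{K}\setminus\mathrm{crit}(f))\times [0,1]\to N(L_{K}),\qquad \phi_{\pm}(q,t)=(q,\pm tJv(q)),
\]
and extend to the critical points of $f$ by taking the closure. In a Morse chart around a critical point $\kappa$, $v(q)$ is the radial unit vector $q/|q|$, so the closure of $G_{\pm v}$ above $\kappa$ fills the $3$-ball
\[
\{(\kappa,\pm tJ\sigma)\colon t\in[0,1],\;\sigma\in S^{2}\}\subset N(L_{K})|_{\kappa};
\]
this exhibits $G_{\pm v}$ as a piecewise smooth $4$-chain. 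The $t=0$ face of each piece equals $L_{K}$ with inward normal $\partial\phi_{\pm}/\partial t|_{t=0}=\pm Jv=\pm J\nabla f/|\nabla f|$, a positive rescaling of $\pm J\nabla f$. Orienting so that both pieces contribute $L_{K}$ with its natural orientation to the boundary, the union $G=G_{v}\cup G_{-v}$ has boundary $2\cdot L_{K}\cup G'$ with inward normals $\pm J\nabla f$ as claimed.

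For the topology of $G'$, the $t=1$ slices $G'_{\pm v}$ over $L_{K}\setminus\mathrm{crit}(f)$ are two disjoint graphs of $\pm Jv$, so two copies of $L_{K}\setminus\mathrm{crit}(f)$. At a critical point $\kappa$, the closures of both graphs meet the fiber $N(L_{K})|_{\kappa}$ in the same sphere $\{J\sigma\colon\sigma\in S^{2}\}=\{-J\sigma\colon\sigma\in S^{2}\}$, so the two sheets are glued along a common $S^{2}$. A local parameterization $(r,\sigma)\mapsto (r\sigma,\pm J\sigma)$ identifies a neighborhood of this sphere in $G'$ with $S^{2}\times(-\epsilon,\epsilon)$; topologically this is the same as drilling a $3$-ball from each of the two copies of $L_{K_{j}}$ near $\kappa$ and identifying the resulting $S^{2}$ boundaries, i.e.\ an interior-connect-sum $1$-handle attachment. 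Since each component $K_{j}$ carries two critical points, $G'$ restricted to the $K_{j}$-neighborhood is two copies of $L_{K_{j}}$ joined by two such $1$-handles.

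For the null-homology claim, Lemma \ref{l:homologydual} reduces the assertion to checking $(G'\cup C_{K}^{\infty})\cdot P_{j}=0$ for each $j$. The plan is to construct an explicit bounding $4$-chain: extend $C_{K}^{\infty}\subset ST^{\ast}S^{3}$ as an $\R$-invariant chain $\widetilde{C}$ into the cylindrical end of $X$, with signs chosen so that $\partial\widetilde{C}$ contributes $-2\cdot L_{K}$ at the interior end of the extension. Then $G\cup\widetilde{C}$ has boundary $G'\cup C_{K}^{\infty}$ after the $2\cdot L_{K}$-contributions cancel, and restricting to $X\setminus\mathrm{int}\,N(L_{K})$ exhibits a bounding chain in $X\setminus L_{K}$. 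Alternatively one can intersect pointwise: at $q_{j}\in K_{j}$, $\nabla f$ is tangent to $K_{j}$, so the sheets $\pm Jv$ push $L_{K}$ in the meridian/Reeb direction while $P_{j}$ extends in the tangent direction, giving $\pm$-paired intersections that cancel; the analogous $\pm R$ symmetry of $C_{K;\pm}^{\infty}$ handles the intersection at infinity.

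The main obstacle I expect is the third step. Matching the asymptotic behavior of $G$ at infinity with the appropriate signed combination of $C_{K;\pm}^{\infty}$, so that $\widetilde{C}$ really exists with the required boundary behavior and $G\cup\widetilde{C}$ really bounds $G'\cup C_{K}^{\infty}$, requires careful orientation bookkeeping; the first two steps reduce to routine local analysis in Morse coordinates.
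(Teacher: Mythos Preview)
Your treatment of the first two assertions is correct and in fact more explicit than the paper's. The paper simply observes that the outer boundary of $G_{\pm v}$ is $\{(q,\pm Jv)\}\cup\bigcup_{j,\sigma}D_{\kappa_j^\sigma}$, and that the fiber disks $D_{\kappa_j^\sigma}$ cancel in $G=G_v\cup G_{-v}$ because they appear with opposite signs (their dimension is $3$, which is odd). Your description of the handle structure at the critical points is a correct elaboration of this.

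For the null-homology assertion, the paper's proof is your ``alternative'': compute the intersection number with the Poincar\'e dual $P_j$ using Lemma~\ref{l:homologydual}. However, the cancellation mechanism is different from what you describe. The paper asserts that $G'$ and $C_K^\infty$ each intersect $P_j$ \emph{once}, and that these two single intersections cancel each other. You instead claim that the two sheets of $G'$ give $\pm$-paired intersections that cancel internally, and likewise that the $\pm R$ symmetry of $C_{K;\pm}^\infty$ forces $C_K^\infty\cdot P_j=0$ on its own. This internal-cancellation argument is suspect: the two sheets of $G'$, oriented so that both contribute $+L_K$ at $t=0$, need not have opposite intersection signs with a fixed $P_j$ (pushing off in opposite normal directions does not by itself flip the intersection sign when the pushoffs carry the same orientation); and $C_{K;+}^\infty$, $C_{K;-}^\infty$ are not related by a simple reflection symmetry, since they share the Reeb-flow piece $F_\epsilon$ and differ only in the hemisphere fillings $G_\pm$. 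So your symmetry heuristic does not obviously go through, whereas the paper's ``one intersection each, cancelling across the union'' is the intended mechanism.

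Your primary route for the third step---building an explicit bounding $4$-chain $G\cup\widetilde C$---is not what the paper does, and as you already note it requires delicate orientation matching at infinity that is avoided entirely by the Poincar\'e-dual intersection count.
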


\begin{proof}
Note that the outer boundary of $G_{\pm v}$ consists of 
\[ 
G_{\pm v}=\left\{(q,\pm Jv)\in N(L_{K})\right\}\cup \bigcup_{j=1}^{m}D_{\kappa_{j}^{0}}\cup D_{\kappa_{j}^{1}},
\] 
where $D_{\kappa_{j}^{\sigma}}$ denotes the fiber disk at $\kappa_{j}^{\sigma}$.
Here the $D_{\kappa_{j}^{\sigma}}$'s come with the orientation determined by $\pm v$ and hence cancel in the boundary of the union $G=G_{v}\cup G_{-v}$ since $\dim(D_{\kappa_{j}^{\sigma}})=3$ is odd. The statement on the boundary of $G$ follows. For the last statement, we simply note that both $G'$ and $C_{K}^{\infty}$ intersect the Poincar\'e duals $P_{j}$ of the generator of the relavant homology group, see Lemma \ref{l:homologydual}, once and that the intersections cancel.  
\end{proof}

We next define $C_{K}$ as follows. Fix a $4$-chain $C_{K}^{0}$ in $X-L_K$ with boundary 
\[ 
\partial C_{K}^{0}=G'\cup C_{K}^{\infty}
\]
and let
\[
C_{K}= C_{K}^{\infty}\times[0,\infty) \cup C_{K}^{0}\cup G.
\]
The above lemmas then show that $C_{K}$ is a $4$-chain with regular boundary along $2\cdot L_{K}$ and inward normal $\pm J\nabla f$, and, furthermore, that $C_{K}$ intersects $L_{K}$ only along its boundary and is otherwise disjoint from it. See Figure~\ref{fig:shade}.

\begin{figure}
\labellist
\small\hair 2pt
\pinlabel ${\color{red} C_K}$ at 110 66
\pinlabel $L_K$ at 110 47
\pinlabel ${\color{blue} C_K}$ at 110 28
\pinlabel ${\color{red} J \nabla_f}$ at 42 60
\pinlabel $\nabla_f$ at 63 39
\pinlabel ${\color{blue} -J \nabla_f}$ at 40 19
\endlabellist
\centering
\includegraphics[width=.4\linewidth]{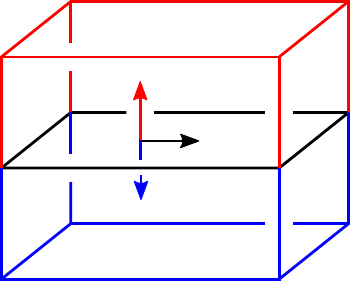}
\caption{The bounding chain $C_K$ near its boundary $L_K$.}
\label{fig:shade}
\end{figure}

\subsubsection{Capping disks and general position with respect to trivial strips}
Our constructions of bounding chains below use general position. For this reason we need to alter the function $f$ and the 4-chain $C_{K}$ above slightly. The problem is that the chains constructed are not disjoint from Reeb chord holomorphic strips. Furthermore, as we shall see we will count intersections of the holomorphic curves with the bounding chains and combine with certain linking and self-linking numbers. For this to work, our perturbation of the bounding chains needs to be connected to our choice of capping paths in $\Lambda_{K}$, which we discuss next.

Fix a base point in each component of $\Lambda_{K}$, and for each Reeb chord endpoint, fix a real analytic arc connecting it to the base point. In the case where $K$ has multiple components, we also fix a path joining all of the base points together; for any pair of base points, some subset of this path will join that pair.

Assume that the derivative of the path at the Reeb chord endpoint is distinct from the local stable and unstable manifolds (i.e., the directions corresponding to the two K\"ahler angles).  
For each Reeb chord, consider the loop consisting of the Reeb chord, the paths between Reeb chord endpoints and the base points, and the path joining the two base points (if the endpoints of the Reeb chord lie on different components); then fix a 2-chain whose boundary is this loop.
We take this chain to be holomorphic along the boundary, i.e., to agree with the appropriate half of the complexification of the real analytic arc near the boundary.

Let $I$ be an index set enumerating all the Reeb chord endpoints. For each $i\in I$, fix a function $g_i$ supported in a small ball around the Reeb chord endpoint such that $\nabla g_i= v_i$, where $v_{i}$ is a nonzero vector in the contact plane at the endpoint. Rename the function $f$ considered above $\tilde f$ and let
\[
f= \tilde f + \epsilon \sum_{I} g_i.
\]
Now let the $4$-chain $C_{K}$ instead start out along $\pm J\nabla f$.

\begin{lemma}
Let $c$ be a Reeb chord of $\Lambda_K$; then there is a uniform neighborhood $\R\times U$ of the boundary in the trivial Reeb chord strip $\R\times c$ such that the interior of $\R\times U$ is disjoint from $C_K$.
\end{lemma}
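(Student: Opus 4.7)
The plan is to exploit the fact that the flow direction $\pm J\nabla f$ defining $C_K$ has a nonzero component normal to $T\Lambda_K$ inside the contact plane $\xi$, while the trivial Reeb strip has no such component to leading order. Since all data is $\R$-invariant in the symplectization direction at infinity, it suffices to work near a fixed Reeb chord endpoint $p\in\Lambda_K$ and obtain a neighborhood of size independent of $s$.

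First, I would fix coordinates on $T_{p}ST^{\ast}S^{3}$ adapted to the splitting $\R R\oplus T_{p}\Lambda_{K}\oplus N$, where $N\subset\xi_{p}$ is the complement of $T_{p}\Lambda_{K}$ inside the contact plane. Since $\Lambda_{K}$ is Legendrian in $\xi$ and the cylindrical $J$ satisfies $J\partial_{s}=R$ and preserves $\xi$, the restriction $J\colon T_{p}\Lambda_{K}\to N$ is a linear isomorphism. In these coordinates the Reeb chord expands as $\gamma(r)=p+rR+O(r^{2})$, so the displacement of a point $(s,\gamma(r))$ in the strip from $(s,p)$ has $R$-component $r$ and $T\Lambda_{K}$- and $N$-components of size $O(r^{2})$. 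On the other hand, near $(s,p)\in L_{K}=\R_{s}\times\Lambda_{K}$ one has $\nabla f=\partial_{s}+\epsilon v_{i}+O(u)$, where $u$ denotes the distance to $(s,p)$ in $L_{K}$ and $v_{i}=\nabla g_{i}(p)\in T_{p}\Lambda_{K}\setminus\{0\}$; hence $\pm J\nabla f=\pm(R+\epsilon Jv_{i})+O(u)$, whose $N$-component equals the fixed nonzero vector $\pm\epsilon Jv_{i}$ of norm $\sim\epsilon$.

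The heart of the argument is now to match components. A strip interior point $(s,\gamma(r))$ lies on $C_{K}$ only if it equals $\phi^{t}_{\pm J\nabla f}(s,q_{\Lambda})$ for some $q_{\Lambda}\in\Lambda_{K}$ near $p$ and $t\in[0,\delta)$; the $s$-coordinate matches automatically because the flow vector has no $\partial_{s}$-component. Projecting the equation $\gamma(r)-p=(q_{\Lambda}-p)+t\,(\pm J\nabla f)+O(t^{2})$ onto the three summands of the splitting, and writing $u=|q_{\Lambda}-p|$, the $R$-component gives $t=\pm r+O(r^{2}+ru)$, the $T\Lambda_{K}$-component gives $u=O(r^{2})$, and the $N$-component gives $O(r^{2})=\pm t\epsilon Jv_{i}+O(tu)$. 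Substituting the first two into the third yields $r\epsilon|Jv_{i}|=O(r^{2})$, forcing $r\gtrsim\epsilon|Jv_{i}|$.

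Consequently, taking $U$ to be a $\delta_{0}$-neighborhood of $\{p_{+},p_{-}\}$ in $c$ with $\delta_{0}$ smaller than a fixed multiple of $\epsilon\,\min_{i}|Jv_{i}|$ produces a neighborhood of the boundary in the strip whose interior is disjoint from $C_{K}$, and $\R$-invariance at infinity makes $\delta_{0}$ uniform in $s$. The main obstacle is the bookkeeping of error terms in the Taylor expansions; the crucial structural input is that the construction forces $v_{i}$ to be a nonzero vector in the contact plane, together with $J|_{T_{p}\Lambda_{K}}$ being an isomorphism onto $N$, so that $|Jv_{i}|$ is uniformly bounded below and the $N$-component equation has the claimed dominant term $\pm\epsilon Jv_{i}$.
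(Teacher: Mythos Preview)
Your argument is correct and follows exactly the same idea as the paper's proof: the trivial strip leaves $p$ in the direction $R$, while $C_K$ leaves $p$ in the direction $R+\epsilon Jv_i$, and since $Jv_i$ is a nonzero vector in the contact hyperplane (hence independent of $R$), the two diverge linearly. The paper records this in one line, while you have carried out the Taylor bookkeeping explicitly (including allowing the base point $q_\Lambda$ to vary in $\Lambda_K$), which makes the argument more complete but not different in substance.
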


\begin{proof}
Near the Reeb chord endpoint $p$ the strip looks like $p+s R$ whereas the chain looks like $p+s(R+J\nabla g_i)$, for small $s$, and $\nabla g_i=v_i\ne 0$ which lies in the contact plane (and hence so does $J\nabla g_i$).
\end{proof}

\subsection{Bounding chains for holomorphic curves}\label{sec:boundingchains}
We next associate a bounding chain to each holomorphic curve $u\colon (\Sigma,\partial\Sigma)\to (X,L_{K})$. Note that we allow $u$ to have positive punctures at Reeb chords. A bounding chain in this setting is a non-compact 2-chain $\sigma_{u}$ in $L_{K}$ such that $\partial \sigma_{u}$ equals $u(\partial\Sigma)$ completed by capping paths and such that the ideal boundary of $\sigma_{u}$ is a curve in $\Lambda_{K}$ that is homologous to a multiple of the longitude, i.e.~homologous to $\sum_{j} n_{j}x_{j}+m_{j}p_{j}$, where $m_{j}=0$, for all $j$. We define it as follows.

Consider first the case of a holomorphic curve $u\colon(\Sigma,\partial\Sigma)\to (X,L_{K})$ without punctures. Then its boundary $u(\partial\Sigma)$ is a collection of closed curves contained in a compact subset of $L_{K}$. By general position, $u(\partial\Sigma)$ does not intersect the stable manifold of the index $1$ critical points $\kappa_{j}^{1}$ of $f$. Let $\sigma'_u$ denote the set of all flow lines of $\nabla f$ starting on $u(\partial\Sigma)$. Then since $f$ has no index $2$ critical points and since $\nabla f$ is vertical (except for small disks around the Reeb chord endpoints) outside a compact set, we find that $\sigma'_u\cap(T\times\Lambda_{K})$ is a closed curve, independent of $T$ for all sufficiently large $T>0$. Write $\partial_{\infty}\sigma'_u\subset\Lambda_{K}$ for this curve.  

View $\sigma_{u}'$ as a non-compact 2-chain with $\partial\sigma_u'=u(\partial\Sigma)$ and with $\partial_{\infty}\sigma'_u$ a curve in the homology class $n\cdot x + m\cdot p$ in $H_{1}(\Lambda_{K})$. Let $W^{\mathrm{u}}(\kappa_{j}^{1})$ denote the unstable manifold of $\kappa^{1}_{j}$. Note that $W^{\mathrm{u}}(\kappa^{1}_{j})$ is a 2-disk, with ideal boundary representing the meridian class $p_{j}$. Define
\begin{equation}\label{eq:defboundingchain1}
\sigma_{u}=\sigma_{u}'-\sum_{j=1}^{k}m_{j} W^{\mathrm{u}}(\kappa^{1}_{j}),
\end{equation}
where $m=(m_{1},\dots,m_{k})$. Then $\sigma_{u}$ has the desired properties.

Consider next the general case when $u\colon(\Sigma,\partial\Sigma)\to(X,L_{K})$ has punctures at Reeb chords $c_{1},\dots, c_{m}$. Let $\delta_{j}$ denote the capping disk of $c_{j}$. 
The main difference in this case is that the boundary $u(\partial\Sigma)$ is not a closed curve. We use the capping disks to close it up as follows. Fix a sufficiently large $T>0$ and replace $u(\partial\Sigma)$ in the construction of $\sigma_u'$ above by the boundary of the chain
\[ 
u(\Sigma)\cap \left(X-([T,\infty)\times ST^{\ast}S^{3})\right) \ \cup \ \bigcup_{j=1}^{m}\delta_{j}
\]
and then proceed as there. This means that we cap off the holomorphic curve, keeping it holomorphic along the boundary by adding capping disks, and construct a bounding chain of this capped disk. Here we take $T$ sufficiently large so that the region where the disk is altered lies in the end where the whole family of curves in which the curve under consideration lives are close to Reeb chords.

\subsection{Generalized holomorphic curves, moduli spaces, and the SFT-potential}\label{sec:gencurves}
In this subsection we define generalized holomorphic curves; counts of these generalized curves will give the Gromov--Witten and SFT potentials.
We first consider the definition of certain linking numbers that will be used in defining generalized holomorphic curves. 

\subsubsection{Linking numbers of holomorphic curves}
Let $u$ and $v$ denote two distinct  holomorphic curves with bounding chains $\sigma_u$ and $\sigma_v$ as defined above in Section~\ref{sec:boundingchains}. Then the boundaries of $u$ and $v$ are oriented curves $\partial u$ and $\partial v$ in $L_K$.
Define the \emph{linking number} $\lk(u,v)$ as the following intersection number:
\[
\lk(u,v):=[\partial u]\cdot \sigma_v = [\partial v]\cdot\sigma_u. 
\] 
To see the second equality note that $\sigma_u\cap \sigma_v$ is an oriented 1-chain interpolating between $(\partial u)\cap \sigma_v$, $(\partial v)\cap \sigma_u$, and the intersection $(\partial_{\infty} u)\cap (\partial_{\infty}v)$ in $\Lambda_{K}$. Since the intersection number at infinity is zero by construction, it follows that $[\partial u]\cdot \sigma_v = [\partial v]\cdot\sigma_u$.

In order to define a similar self-linking number $\slk(u,u)$ between $u$ and itself, we pick a normal vector field $\nu$ along $\partial u$ in general position with respect to $\nabla f$. Let $\partial u_{\nu}$ denote the curve $\partial u$ shifted slightly along $\nu$. We then shift a neighborhood of $\partial u$ in $u$ along a small extension of the vector field $J\nu$. Then the shifted version $u_{J\nu}$ of $u$ is a 2-chain transverse to the 4-chain $C_{K}$, and $C_{K}$ and $u_{J\nu}$ have disjoint boundaries. Define the self linking number as
\[
\slk(u,u) = [\partial u_{\nu}]\cdot \sigma_u+[u_{J\nu}]\cdot C_{K}.
\]
Note that $\slk(u,u)$ is independent of the choice of $\nu$: if we change $\nu$ by a full twist then first term on the right hand side changes by $\pm 1$ and the second by $\mp 1$.

\subsubsection{Moduli spaces}
We define the interior of the moduli spaces that we will use. As the curves we consider may well be multiply covered, the actual definition of the moduli spaces requires the use of abstract perturbations. As mentioned previously, we will not discuss the details of the abstract perturbation scheme used but merely give an outline.  

We build the perturbation by induction on energy and Euler characteristic. The energy concept we use is the Hofer energy. Starting at the lowest energy level and Euler characteristic, we make all holomorphic curves transversely cut out and transvere with respect to the Morse data fixed. We also fix appropriate shifting vector fields for the curves. As we increase the energy level we keep the curves transverse to the Morse data as well as to curves constructed in earlier steps of the construction. More precisely, a holomorphic curve in general position has tangent vector independent from $\nabla f$ along its boundary and we take the shifting vector field $\nu$ to be everywhere independent to the normal vector field along the boundary defined by $\nabla f$, in such a way that $\nabla f$, $\nu$, and the tangent vector of the boundary form a positively oriented frame. Finally we assume that the $J\nu$-shifted holomorphic curve is transverse to $C_{K}$.

Elements in the moduli spaces we use consist
of the following data: 
\begin{itemize}
\item Begin with a finite oriented graph $\Gamma$ with vertex set $V(\Gamma)$ and edge set $E(\Gamma)$. 
\item To each $v\in V(\Gamma)$ is associated a (generic) holomorphic curve $u^v$ with boundary on $L_{K}$ (and possibly with positive punctures). 
\item To each edge $e\in E(\Gamma)$ that has its endpoints at distinct vertices, $\partial e=v_+-v_-$, $v_+\ne v_-$, is associated an intersection point of the boundary curve $\partial u_{v_{-}}$ and the bounding chain $\sigma_{u_{v_+}}$. 
\item
To each edge $e\in E(\Gamma)$ which has its endpoints at the same vertex $v_0$, $\partial e=v_0-v_0=0$, is associated either an intersection point in $\partial(u^{v_{0}})_{\nu}\cap \sigma_{u^{v_{0}}}$  or an intersection point in $(u^{v_{0}})_{J\nu}\cap C_{K}$.  
\end{itemize}
We call such a configuration a \emph{generalized holomorphic curve over $\Gamma$} and denote it $\Gamma_{\mathbf{u}}$, where $\mathbf{u}=\{u^{v}\}_{v\in V(\Gamma)}$ lists the curves at the vertices.

\begin{rmk}
Several edges of a generalized holomorphic curve may have the same intersection point associated to them.  
\end{rmk}

\begin{rmk}
Note that the convention for edges with endpoints at distinct vertices depends on the interplay between the shifting vector field for multiple copies of a given curve and the choice of obstruction chains. Our choice guarantees that there are no contributions to the linking number close to the boundary of a curve.
\end{rmk}

We define the Euler characteristic of a generalized holomorphic curve $\Gamma_{\mathbf{u}}$ as
\[
\chi(\Gamma_{\mathbf{u}})= \sum_{v\in V(\Gamma)} \chi(u_v) -\# E(\Gamma),
\]
where $\# E(\Gamma)$ denotes the number of edges of $\Gamma$, and the dimension of the moduli space containing $\Gamma_{\mathbf{u}}$ as 
\[
\dim(\Gamma_{\mathbf{u}})=\sum_{v\in V(\Gamma)} \dim(u^v),
\]
where $\dim(u^{v})$ is the formal dimension of $u^{v}$.
In particular, if $\dim(\Gamma_{\mathbf{u}})=0$ then $u^{v}$ is rigid for all $v\in V(\Gamma)$ and if $\dim(\Gamma_{\mathbf{u}})=1$ then $\dim(u^{v})=1$ for exactly one $v\in V(\Gamma)$ and $u^v$ is rigid for all other $v\in V(\Gamma)$.

As usual our moduli spaces are branched oriented orbifolds. In fact we will consider only moduli spaces of dimension 0 and 1 and hence we can think of them as branched manifolds rather than orbifolds. The weight of the moduli space at $\Gamma_{\mathbf{u}}$ is 
\[
w(\Gamma_{\mathbf{u}})=\frac{1}{N(\Gamma)}\,2^{-|E(\Gamma)|}\,\prod_{v\in V(\Gamma)} w(u^{v}),
\] 
where $w(u^{v})$ is the weight of the usual moduli space at $u^{v}$ and where $N$ is a symmetry factor coming from exchanging identical edges and vertices. We orient the moduli space using the product of the orientations over the vertices and the intersection signs at the edges.

The relative homology class represented by $\Gamma_{\mathbf{u}}$ is the sum of the homology classes of the curves $u^{v}$ at its vertices, $v\in V(\Gamma)$. 

\subsection{The SFT-potential}\label{sec:sftpotential}
We define the SFT-potential to be the generating function of rigid generalized curves (over graphs $\Gamma$) as just described:
\[
\mathbf{F}_{K} = \sum_{n,r,\mathbf{c^{+}}} F_{K;n,r,\chi,\mathbf{c}^{+}}\, g_{s}^{-\chi+\ell(\mathbf{c}^{+})}\, e^{n\cdot x} Q^{r}\, \mathbf{c}^{+},
\]
where $F_{K;n,r,\chi,\mathbf{c}^{+}}$ counts the algebraic number of generalized curves $\Gamma_{\mathbf{u}}$ in homology class $n\cdot x+rt\in H_{2}(X,L_{K})$ with $\chi(\Gamma_{\mathbf{u}})=\chi$ and with positive punctures according to the Reeb chord word $\mathbf{c}^{+}$.

We will sometimes use the decomposition of $\mathbf{F}_{K}$ according to the number of positive punctures:
\[ 
\mathbf{F}_{K} = \mathbf{F}_{K}^{0} + \mathbf{F}_{K}^{1} + \dots,
\]
where $\mathbf{F}^{j}_{K}$ counts the curves with $j$ positive punctures. We then define the open Gromov--Witten potential of $L_K$ to be 
the constant term $\mathbf{F}^{0}_{K}$, which counts configurations without positive punctures.
In particular, the wave function that counts disconnected curves without positive punctures is
\[ 
\Psi_{K}= e^{\mathbf{F}_{K}^{0}}.
\]

For computational purposes we next note that we can rewrite the sum for $\mathbf{F}_{K}$ in the following way. Instead of the complicated oriented graphs with many edges considered above, we look at unoriented graphs with at most one edge connecting every pair of distinct vertices and no edge connecting a vertex to itself. We call such graphs \emph{simple graphs}.

As before we have rigid curves at the vertices of our graphs. Above we defined the linking number between distinct holomorphic curves, $\lk(u_0,u_1)$, and the self linking number of one holomorphic curve, $\slk(u)$. 
If $e$ is an edge in a simple graph $\Delta$ with distinct endpoints $v_{0}$ and $v_{1}$, we define
\[
\lk(e)=\lk(u^{v_{0}},u^{v_{1}}),
\]
where $u^{v_{0}}$ and $u^{v_{1}}$ are the curves at the vertices at the end points of $e$.
If $v$ is a vertex in a simple graph $\Delta$ we define
\[
\slk(v)=\slk(u^v),
\] 
where $u^{v}$ is the curve at $v$.
With these definitions, we now weight each simple graph by a $g_{s}$-dependent weight:
\[
W(\Delta)=\frac{1}{N(\Delta)}\prod_{v\in V(\Delta)} w(u^v)\, g_{s}^{-\chi(u^v)}e^{\tfrac12g_s\slk(v)}\prod_{e\in E(\Delta)}\left(e^{g_s\lk(e)}-1\right),
\] 
where $N(\Delta)$ is a symmetry factor coming from exchanging identical vertices.
Furthermore, we can assign a sign to each simple graph $\Delta$ given by the product of the orientations of the curves at its vertices.

We then get a simplified formula for the SFT-potential:
\[
\mathbf{F}_{K}= \sum_{n,k} G_{n,r}\, e^{n\cdot x} Q^{r}\mathbf{c}^{+},
\]
where $G_{n,r}$ is the algebraic sum of $g_s$-dependent weights of all simple graphs in homology class $n\cdot x+nt$ with positive punctures according to $\mathbf{c}^{+}$. This follows from counting the contributions to the potential lying over a given simple graph, where we project from a complicated graph to a simple one by identifying all edges with the same pair of distinct endpoints and by deleting all edges with endpoints at the same vertex.

\section{Compactification of 1-dimensional moduli spaces and the SFT-equation}
The generalized holomorphic curves that we defined in Section \ref{sec:gencurves} constitute the  open strata of the 1-dimensional moduli spaces that underlie the SFT-equation. Such curves correspond to graphs $\Gamma$ with a generic curve of dimension 1 at exactly one vertex. Besides the usual holomorphic degenerations in 1-parameter families, there are new boundary phenomena arising from the 1-dimensional curve becoming non-generic. In this section we study this and argue that all degenerations, except for breaking at infinity, cancel out. This means that the boundary of each 1-dimensional moduli space corresponds to two-level curves only, which then leads to the SFT-equation.   

\subsection{Boundary phenomena in moduli spaces of dimension one}
Consider a generalized holomorphic curve $\Gamma_{\mathbf{u}}$ of dimension 1. We have the following boundary phenomena that come from degenerations of the holomorphic curves $u^{v}$ at the vertices $v\in V(\Gamma)$:  
\begin{itemize}
\item[$(1)$] Splitting at Reeb chords, see Figure \ref{fig:Reebsplit}.   
\item[$(2)$] Hyperbolic boundary splitting, see Figure \ref{fig:hyperbolic}.
\item[$(3)$] Elliptic boundary splitting, see Figure \ref{fig:elliptic}.  
\end{itemize}
In addition, since we require that the 1-dimensional curve in our graph is generic with respect to the auxiliary Morse function $f$ and the $4$-chain $C_{K}$, there are also the following degenerations: 
\begin{itemize}
\item[$(4)$] Crossing the stable manifold of $\kappa_{1}$: the boundary of the curve intersects the stable manifold of $\kappa_{1}$, see Figure \ref{fig:stablecross}.
\item[$(5)$] Boundary crossing: a point in the boundary mapping to a bounding chain moves out across the boundary of a bounding chain, see Figure \ref{fig:cross}.
\item[$(6)$] Interior crossing: An interior marked point mapping to $C_K$ moves across the boundary $L_{K}$ of $C_{K}$, see Figure \ref{fig:out}.
\item[$(7)$] Boundary kink: The boundary of a curve becomes tangent to $\nabla f$ at one point, see Figure \ref{fig:encomplex}.
\item[$(8)$] Interior kink: A marked point mapping to $C_K$ moves to the boundary in the holomorphic curve, see Figure \ref{fig:encomplex}.
\end{itemize}

\begin{figure}
\centering
\includegraphics[width=.4\linewidth]{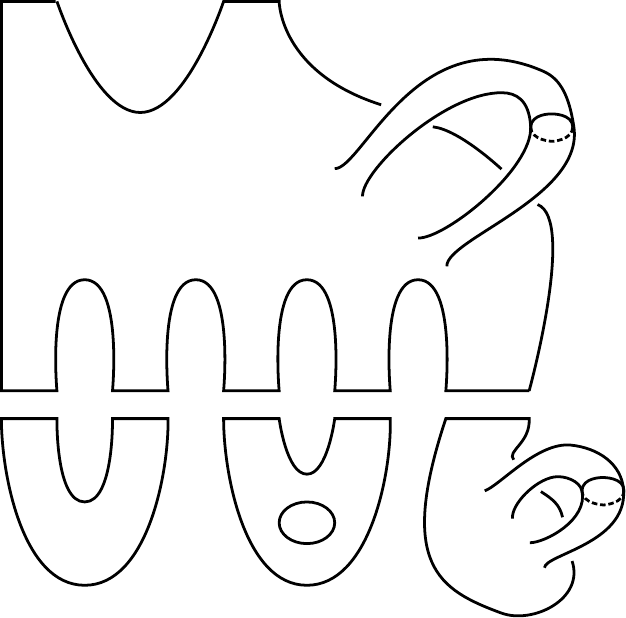}
\caption{Splitting at Reeb chords. The top part of the diagram is in the symplectization, the bottom in $X$.}
\label{fig:Reebsplit}
\end{figure}

\begin{figure}
\centering
\includegraphics[width=.5\linewidth]{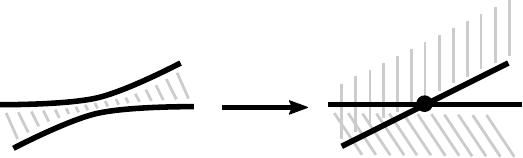}
\caption{Hyperbolic boundary splitting.}
\label{fig:hyperbolic}
\end{figure}

\begin{figure}
\labellist
\small\hair 2pt
\pinlabel ${\color{blue} L_K}$ at 128 33
\pinlabel ${\color{blue} L_K}$ at 336 33
\endlabellist
\centering
\includegraphics[width=.7\linewidth]{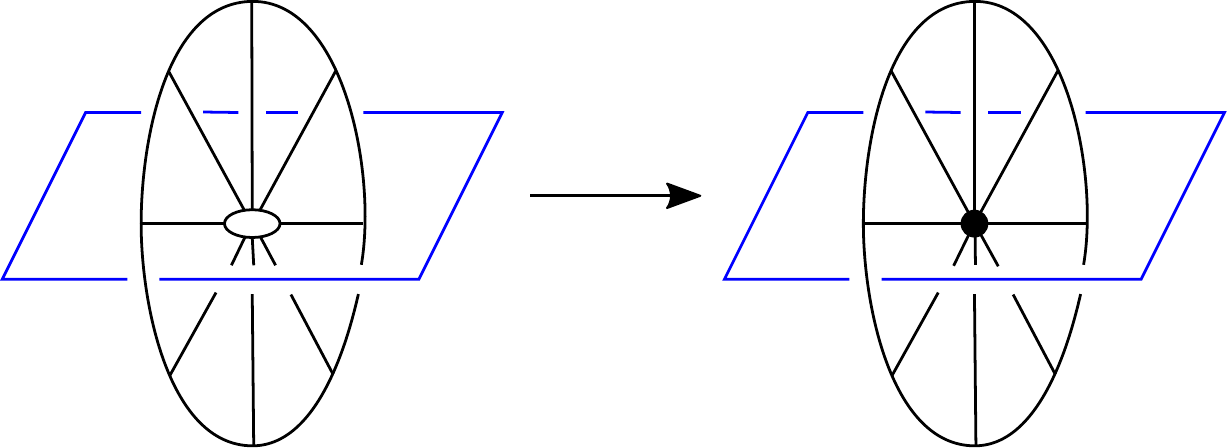}
\caption{Elliptic boundary splitting.}
\label{fig:elliptic}
\end{figure}

\begin{figure}
\labellist
\small\hair 2pt
\pinlabel ${\color{blue} \kappa^0}$ at 0 5
\pinlabel ${\color{blue} \kappa^1}$ at 93 60
\endlabellist
	\centering
	\includegraphics[width=.3\linewidth]{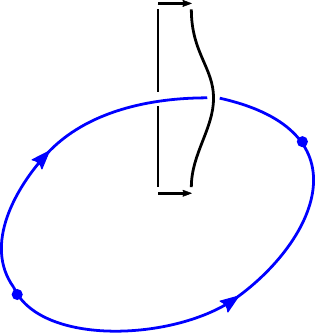}
	\caption{Crossing the stable manifold of $\kappa^{1}$.}
	\label{fig:stablecross}
\end{figure}

\begin{figure}
\centering
\includegraphics[width=.7\linewidth]{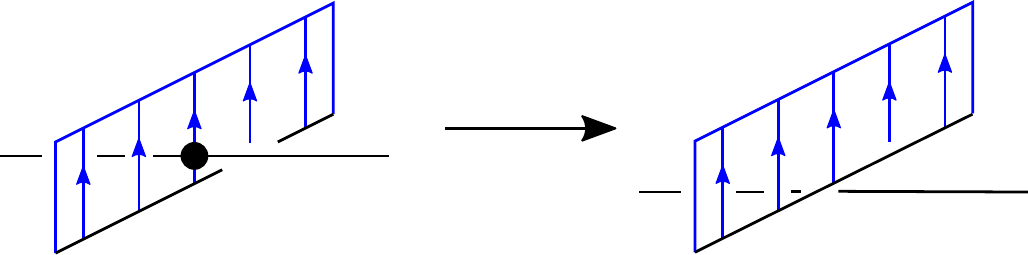}
\caption{Boundary crossing.}
\label{fig:cross}
\end{figure}

\begin{figure}
\labellist
\small\hair 2pt
\pinlabel ${\color{blue} C_K}$ at 124 53
\pinlabel ${\color{blue} C_K}$ at 284 53
\pinlabel ${\color{red} u}$ at 58 33
\pinlabel ${\color{red} u}$ at 219 1
\pinlabel $L_K$ at 109 9
\pinlabel $L_K$ at 265 9
\endlabellist
\centering
\includegraphics[width=.8\linewidth]{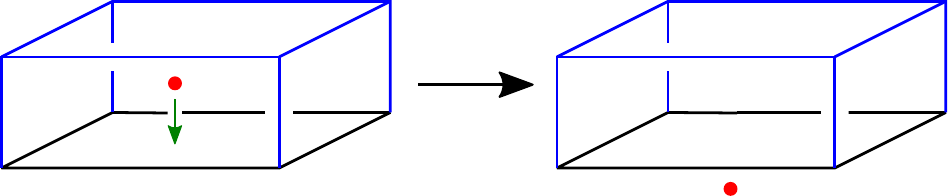}
\caption{Interior crossing.}
\label{fig:out}
\end{figure}

\begin{figure}
\labellist
\small\hair 2pt
\pinlabel ${\color{blue} \nabla f}$ at -2 30
\pinlabel ${\color{blue} u \cap C_K}$ at 178 60
\pinlabel $\partial u$ at 142 60
\pinlabel $\partial u$ at 65 49
\endlabellist
\centering
\includegraphics[width=.5\linewidth,]{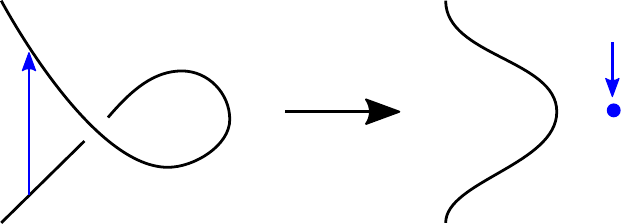}
\caption{From left to right: boundary kink; from right to left: interior kink.}
\label{fig:encomplex}
\end{figure}

\noindent
We must also consider boundary phenomena near the Reeb chord endpoints where we have fixed capping paths:
\begin{itemize}
	\item[$(9)$] The leading Fourier coefficient at a positive puncture vanishes.
\end{itemize}

\begin{lma}\label{l:codim1walls}
For generic data, $(1)-(9)$ is the complete list of degenerations in 1-parameter families of generalized curves.
\end{lma}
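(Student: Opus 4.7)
The plan is to split the codimension one boundary of the moduli space of one-dimensional generalized holomorphic curves $\Gamma_{\mathbf{u}}$ into three disjoint sources: (i) intrinsic degenerations of the unique one-dimensional vertex curve $u^{v_{0}}$, (ii) failures of transversality of the edge incidence conditions, and (iii) failures of genericity of the bounding chain construction for $u^{v_{0}}$. Since all vertices $v\ne v_{0}$ carry rigid curves $u^{v}$ which are isolated and therefore unchanged along the family, only these three mechanisms can produce codimension one strata.

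For (i) I apply SFT compactness in the form suitable for curves with Lagrangian boundary on $L_{K}$, restricted to dimension one moduli. The codimension one breakings consist of two-level Reeb chord splitting, which is item (1), and boundary bubbling. Boundary bubbling occurs when two boundary arcs meet on $L_{K}$: a transverse crossing yields hyperbolic boundary splitting (2), while a tangential crossing yields elliptic boundary splitting (3). Separately, at each positive Reeb chord puncture the asymptotic expansion is controlled by a leading Fourier coefficient, and its vanishing is a codimension one phenomenon that produces (9). For a generic almost complex structure no further codimension one degeneration of $u^{v_{0}}$ persists.

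For (ii), each edge of $\Gamma$ records a transverse intersection point of the boundary of one curve with a bounding chain, or of the shifted curve $(u^{v_{0}})_{J\nu}$ with the 4-chain $C_{K}$. In a one-parameter family such a transverse intersection can disappear in exactly two ways: by escaping through the boundary of the chain that carries it, or by colliding tangentially with a second intersection. Escape through $\partial\sigma_{u^{v}}\subset L_{K}$ yields boundary crossing (5); escape through $\partial C_{K}=2\cdot L_{K}$ yields interior crossing (6); a tangential collision of boundary intersections corresponds to $\partial u^{v_{0}}$ becoming tangent to $\nabla f$, which is the boundary kink (7); a tangential collision of interior intersections with $C_{K}$ is the interior kink (8). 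For (iii), the chain $\sigma_{u^{v_{0}}}$ is constructed as the flow-out of $\partial u^{v_{0}}$ under $\nabla f$ together with meridional corrections by the unstable disks $W^{\mathrm{u}}(\kappa^{1}_{j})$; its smooth structure degenerates precisely when $\partial u^{v_{0}}$ crosses a stable manifold $W^{\mathrm{s}}(\kappa^{1}_{j})$, producing (4). The absence of index $2$ critical points of $f$, together with the radial behaviour of $\nabla f$ outside a compact set, excludes any other Morse-theoretic codimension one stratum.

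The hard part will be to set up the inductive abstract perturbation scheme described in Section \ref{sec:gencurves}, proceeding by Hofer energy and Euler characteristic, so that all three types of transversality hold simultaneously and are compatible with the geometric data of Sections \ref{s:adddata}--\ref{sec:boundingchains}. At each inductive step one must ensure that newly added curves are transverse to all previously chosen Morse data, bounding chains, and the 4-chain $C_{K}$; that the shifting vector fields $\nu$ and $J\nu$ used to define self-linking and interior linking contributions do not generate accidental additional strata; and that rigid curves at vertices $v\ne v_{0}$ truly persist unchanged throughout the family. Granted such a perturbation scheme, a Sard--Smale argument combined with the classification above shows that (1)--(9) exhausts the codimension one phenomena.
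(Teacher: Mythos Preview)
Your three-way split into intrinsic SFT degenerations, edge-incidence failures, and bounding-chain failures is essentially the paper's organization repackaged; the paper groups $(1)$--$(3)$ as the well-known holomorphic-curve-with-boundary degenerations, then $(4),(7)$ as the boundary relative to $\nabla f$, $(5)$ as boundary versus bounding chains in $L_K$, $(6),(8)$ as the interior relative to $C_K$ and its boundary $L_K$, and finally $(9)$ via the Fourier asymptotics at punctures.

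Two of your geometric descriptions are off, however, and these matter for what comes next. Elliptic boundary splitting $(3)$ is not a ``tangential crossing'' of two boundary arcs; it is the degeneration in which an \emph{interior} point of the curve maps to $L_K$ (equivalently a boundary circle collapses to a point), see Lemma~\ref{l:elliptic} and Figure~\ref{fig:elliptic}. And the interior kink $(8)$ is not a tangential collision of two interior intersection points with $C_K$; it is a single intersection point in $u_{J\nu}\cap C_K$ migrating to the boundary of the domain, where it trades off against a boundary self-linking intersection in $\partial u_\nu\cap\sigma_u$. Figure~\ref{fig:encomplex} shows $(7)$ and $(8)$ as the same local move read in opposite directions. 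These distinctions are not cosmetic: the subsequent cancellation lemmas pair $(3)$ with $(6)$ (Lemma~\ref{l:elliptic}) and $(7)$ with $(8)$ (Lemma~\ref{l:encomplex}), and with the wrong geometric pictures those pairings would not make sense.
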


\begin{proof}
Codimension one degenerations of holomorphic curves with boundary in a Lagrangian are well-known and correspond to $(1)-(3)$. 

Consider the boundary of the curve. For generic data this is a 1-parameter family in general position with respect to the gradient vector field of the Morse function $f$. The corresponding degenerations are $(4)$ and $(7)$. Also, the family of boundary curves is generic as a family of smooth curves in $L_{K}$. The corresponding degeneration is $(5)$.

Next consider the interior of the curve. We have a family of surfaces with boundary in general position with respect to $C_{K}$ and its boundary $L_{K}$. The corresponding degenerations are then $(6)$ and $(8)$.

Finally, we must consider general position with respect to capping paths. Near the capping path endpoint the curve admits a Fourier expansion and in a generic 1-parameter family, degenerations correspond to transverse vanishing of the Fourier coefficient in the direction of leading asymptotics. The corresponding degeneration is $(9)$. 
\end{proof}

\subsection{Invariance in 1-parameter families}
In this subsection we argue that the generating function for generalized holomorphic curves at generic instances is independent of the particular instant. This also leads to invariance of the open Gromov--Witten potential. More precisely, we aim to justify the following result (see however Remark~\ref{rmk:parallelchains} for a discussion of a missing piece of the argument).

\begin{thm}\label{t:mainmodulispace}
$\quad$
\begin{itemize}
\item[$(a)$]
Let $\mathbf{c}^{+}$ be a word of Reeb chords of total grading 1. Let $\mathcal{M}(\mathbf{c}^{+})$ denote the moduli space of generalized holomorphic curves in $X$ with boundary on $L_{K}$, with positive punctures at $\mathbf{c}^{+}$. Then $\mathcal{M}(\mathbf{c}^{+})$ is a weighted branched oriented 1-manifold with boundary given by the moduli space of two-level generalized holomorphic curves of the following form: one $\R$-invariant family of generalized curves in the symplectization $\R\times ST^{\ast}S^{3}$, along with rigid generalized holomorphic curves in $X$ attached at Reeb chords and at bounding chains.
\item[$(b)$]
Let $s\in I$ be a generic 1-parameter family of perturbations for holomorphic curves in $(X,L_{K})$. Let 
\[ 
\mathbf{F}_{K}(s)=\sum_{j=0}^{\infty} \mathbf{F}^{j}_{K}(s)
\]
denote the generating function for generalized holomorphic curves in $(X,L_{K})$. Then $\mathbf{F}_{K}(s)$ is independent of $s\in I$.
\end{itemize}
\end{thm}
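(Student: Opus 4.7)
The plan is to prove part (a) by going through the nine codimension-one degeneration types listed in Lemma~\ref{l:codim1walls} and showing that everything except the Reeb chord splittings (1) either is invisible in the moduli space or cancels in oriented pairs; part (b) will then follow from (a) by a standard parametrized cobordism argument. The organizing principle is that the extra geometric data---the Morse function $f$, the correction disks $W^{\mathrm{u}}(\kappa^1_j)$ subtracted in \eqref{eq:defboundingchain1}, the 4-chain $C_K$ with $\partial C_K = 2L_K$ and inward normal $\pm J\nabla f$, and the capping paths with their prescribed transversality to the local K\"ahler directions---has been designed precisely so that every non-Reeb-chord degeneration type admits an exact cancellation partner.

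I would first dispense with the ``automatic'' walls. Phenomenon (4), where $\partial u$ sweeps across $W^{\mathrm{s}}(\kappa^1_j)$, contributes no boundary: the definition $\sigma_u = \sigma_u' - \sum_j m_j W^{\mathrm{u}}(\kappa^1_j)$ in \eqref{eq:defboundingchain1} is arranged so that the discontinuity of $\sigma_u'$ across the wall (a meridian disk gained or lost) is exactly compensated by the jump of the integer $m_j$ by $\pm 1$, leaving the bounding chain $\sigma_u$ continuous through the wall. The Fourier-coefficient vanishings (9) at positive punctures are similarly absorbed by the choice of capping paths transverse to the local stable and unstable manifolds: the leading-order asymptotic data varies continuously across (9) once the capping chain is incorporated.

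The principal geometric cancellation is between the kink phenomena (7) and (8), powered by the identity $\partial C_K = 2 L_K$ and the prescribed inward normal $\pm J\nabla f$. When $\partial u$ becomes tangent to $\nabla f$, the term $[\partial u_\nu]\cdot\sigma_u$ in $\slk(u)$ jumps by $\pm 1$; at exactly the same instant, the shifted interior $u_{J\nu}$ sweeps a marked point across $L_K \subset \partial C_K$, so the complementary term $[u_{J\nu}]\cdot C_K$ jumps by $\mp 1$. This is the Viro encomplexed-writhe mechanism of \cite{shade}: the sum $\slk(u)$ is globally continuous, and so the factor $e^{\tfrac12 g_s\slk(v)}$ in the simple-graph weight $W(\Delta)$ is smooth across these walls. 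For the remaining degenerations (2), (3), (5), (6), which create or destroy edges of the underlying graph, the key input is the Leibniz-type expansion of the edge weight $e^{g_s\lk(e)} - 1$ order-by-order in $g_s$: a jump in an intersection edge at a type-(5) wall, where the two boundaries $\partial u^{v_-}$ and $\partial u^{v_+}$ touch, is matched by a hyperbolic boundary splitting (2) in which the same pair of boundaries undergoes a saddle transition, while an interior marked point exiting $C_K$ through $L_K$ at a type-(6) wall matches the data of an elliptic disk bubble (3) by the same mechanism. Careful bookkeeping of orientations and of the symmetry factors $N(\Delta)$ then shows that each of (2)--(9) is cancelled in signed pairs, leaving the two-level boundary of (1) as the only surviving contribution.

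For part (b), I would introduce the parametrized moduli space $\bigcup_{s\in I}\mathcal{M}(\mathbf{c}^+;s)$, a weighted branched oriented $1$-manifold whose boundary equals $\mathbf{F}_K(1) - \mathbf{F}_K(0)$ together with the interior codimension-one walls at generic $s\in I$. The walls of types (2)--(9) cancel by the same arguments as in part (a), and the Reeb-chord splitting walls (1) at interior $s$ cancel against each other via the exponential identity underlying \eqref{eq:masterconnected} applied to the parametrized family. Hence $\mathbf{F}_K(s)$ is independent of $s$. The main obstacle in carrying out this program rigorously is the construction of an abstract perturbation scheme (Kuranishi, polyfold, or algebraic VFC) that simultaneously achieves transversality for all multiply-covered curves and is compatible with the continuous deformation of the bounding chains $\sigma_u$ and the 4-chain $C_K$; as the authors indicate, this is the content of Remark~\ref{rmk:parallelchains} and is expected to be supplied by the framework of \cite{ESh}.
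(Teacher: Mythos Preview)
Your proposal follows the paper's approach: invoke Lemma~\ref{l:codim1walls}, then cancel degenerations (2)--(9) in the pairings you name, leaving only the Reeb splitting (1) as boundary. The paper packages exactly these cancellations as Lemmas~\ref{l:crossunstable}, \ref{l:boundarysplitandcross}, \ref{l:elliptic}, \ref{l:encomplex}, and \ref{l:capping}.

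Two points where your sketch is thinner or slightly off. First, for the (2)--(5) and (3)--(6) cancellations, the paper's mechanism is more specific than a ``Leibniz-type expansion of $e^{g_s\lk(e)}-1$'': Lemmas~\ref{l:boundarysplitandcross} and~\ref{l:elliptic} use an abstract perturbation that \emph{time-orders} the coincident crossings so that ordinary gluing applies at each separately; the weights $\epsilon^m/(2^m m!)$ from the two resolutions then combine (cancelling for even $m$) to give the factor $e^{\frac12 g_s}-e^{-\frac12 g_s}$. This time-ordering hypothesis is exactly what Remark~\ref{rmk:parallelchains} flags as the missing rigorous ingredient. Similarly, your treatment of (9) is vaguer than the paper's Lemma~\ref{l:capping}, which is an explicit local computation in the Fourier expansion showing that the change in real intersections with the capping path is compensated by the change in interior intersections with the $4$-chain.

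Second, your part (b) contains an unnecessary and slightly incorrect step. The curves contributing to $\mathbf{F}_K$ have positive punctures of total grading $0$; a Reeb splitting (1) in the parametrized family would require a symplectization level that is rigid modulo $\R$, hence of index $1$, forcing $|\mathbf{c}^-|=-1$, which is impossible since all Reeb chords of $\Lambda_K$ have non-negative grading. So no appeal to the exponential identity \eqref{eq:masterconnected} is needed for (b); Reeb splitting simply does not arise, and the cancellation of (2)--(9) alone gives invariance. The paper accordingly treats (a) and (b) by the same sequence of lemmas without separating them.
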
 

\begin{proof}
Lemma \ref{l:codim1walls} implies that it suffices to show that the boundary degenerations $(2)-(9)$ cancel out. We show this below in a sequence of lemmas that together then establish the theorem. 
\end{proof}

We next consider the lemmas needed to demonstrate the invariance result in Theorem \ref{t:mainmodulispace}. We first consider the degeneration $(4)$:

\begin{lma}\label{l:crossunstable}
The moduli space of generalized holomorphic curves does not change under degeneration $(4)$, i.e., when the boundary of a holomorphic curve crosses the unstable manifold of $\kappa^{1}_{j}$.
\end{lma}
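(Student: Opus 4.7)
The plan is to reduce the lemma to verifying that the bounding chain $\sigma_{u(s)}$ appearing in the definitions of $\lk$, $\slk$, and the edge data of generalized holomorphic curves extends continuously across the parameter value $s_0$ at which $\partial u(s_0)$ meets the one-dimensional gradient arc through $\kappa^{1}_{j}$ transversely in a single point. Once this continuity is established, every intersection point contributing to an edge persists unchanged across the crossing, no boundary stratum appears in the 1-dimensional moduli space, and the weighted count of generalized curves, hence $\mathbf{F}_{K}(s)$, is unaffected by this degeneration.

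First, I would work in a Morse chart around $\kappa^{1}_{j}$ adapted to $f$, with the one-dimensional arc of points flowing into $\kappa^{1}_{j}$ and the two-dimensional disk $W^{\mathrm{u}}(\kappa^{1}_{j})$ as coordinate axes. For $s<s_0$, the flow line of $\nabla f$ starting at the point $q(s)\in\partial u(s)$ nearest the arc escapes to one side of $W^{\mathrm{u}}(\kappa^{1}_{j})$; for $s>s_0$, the analogous flow line escapes to the opposite side; at $s=s_0$, it limits onto $\kappa^{1}_{j}$. Comparing the resulting flow 2-chains $\sigma'_{u(s)}$ on either side of $s_0$, their difference, up to chain contributions supported in an arbitrarily small neighborhood of $q(s_0)$, equals $\pm W^{\mathrm{u}}(\kappa^{1}_{j})$, with sign governed by the transverse intersection sign of $\partial u(s)$ with the arc.

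Next, I would track the asymptotic boundary $\partial_{\infty}\sigma'_{u(s)}\subset\Lambda_{K}$: the extra wrap around $\kappa^{1}_{j}$ produced by the switch in escape side is asymptotic at infinity to a full meridian of $K_{j}$, so the coefficient $m_{j}$ of $p_{j}$ in the class $[\partial_{\infty}\sigma'_{u(s)}]$ jumps by exactly the same $\pm 1$. Substituting into the defining formula \eqref{eq:defboundingchain1}, $\sigma_{u}=\sigma'_{u}-\sum_{j}m_{j}\,W^{\mathrm{u}}(\kappa^{1}_{j})$, the two jumps cancel as chains, and not merely in homology, so $\sigma_{u(s)}$ extends continuously across $s_0$. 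Since every edge of a generalized holomorphic curve $\Gamma_{\mathbf{u}}$ records either an intersection with some $\sigma_{u^{v}}$ or with the chain $C_{K}$, and $C_{K}$ is untouched by this degeneration, this continuity preserves all combinatorial data, signs, and weights $W(\Delta)$.

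The step I expect to be the main obstacle is the sign bookkeeping in the two jumps: the orientation of $W^{\mathrm{u}}(\kappa^{1}_{j})$ induced by the transverse crossing of $\partial u(s)$ with the gradient arc must be identified with the orientation of the meridian $p_{j}=\partial_{\infty}W^{\mathrm{u}}(\kappa^{1}_{j})$ controlling the sign of the $m_{j}$-jump. This amounts to fixing once and for all a coherent orientation on $W^{\mathrm{u}}(\kappa^{1}_{j})$ together with that on its ideal meridian boundary; once this is done, the cancellation is automatic and the remainder of the argument does not depend on any finer geometric data of the family $u(s)$ beyond the transversality of $\partial u(s_0)$ with the arc.
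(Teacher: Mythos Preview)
Your argument is correct and follows essentially the same route as the paper: both observe that as $\partial u$ crosses the one-dimensional stable arc of $\kappa^{1}_{j}$, the flow-chain $\sigma'_{u}$ jumps by $\pm W^{\mathrm{u}}(\kappa^{1}_{j})$ while the meridian coefficient $m_{j}$ jumps by the same sign, so the two changes cancel in $\sigma_{u}=\sigma'_{u}-\sum_{j}m_{j}W^{\mathrm{u}}(\kappa^{1}_{j})$ and the bounding chain, hence all edge data, is unchanged. Your version is simply more explicit about the local Morse chart and the sign coherence, points the paper leaves implicit.
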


\begin{proof}
Recall the definition \eqref{eq:defboundingchain1} of the bounding chain of a generic holomorphic curve $u$:
\[ 
\sigma_{u}=\sigma'_{u} - m\cdot W^{\mathrm{u}}(\kappa^{1}),
\]
where $\sigma'_{u}$ is the chain of flow lines of $\nabla f$ starting on $\pa u$, and where this chain intersects the torus at infinity in a curve $\gamma$ of homology class $n\cdot x + m\cdot p$. Note that as $\pa u$ crosses $W^{\mathrm{u}}(\kappa^{1}_{j})$, $\sigma'_{u}$ changes by $\pm W^{\mathrm{u}}(\kappa^{1}_{j})$ and $m_{j}$ changes by $\pm 1$. These two changes cancel out in $\sigma_{u}$, leaving the bounding chain and hence the moduli space of generalized holomorphic curves unchanged. 
\end{proof}

We next consider the case of hyperbolic boundary splitting $(2)$, which cancels with boundary crossing $(5)$. The next two results, Lemmas~\ref{l:boundarysplitandcross} and~\ref{l:elliptic}, depend on the fine points of the perturbation scheme we use; accordingly, the arguments given here should be considered as outlines rather than complete proofs, see Remark \ref{rmk:parallelchains}.  

\begin{lma}\label{l:boundarysplitandcross}
A curve with a boundary node in a generic 1-parameter family appears both as a boundary splitting $(2)$ and as a boundary crossing $(5)$. The moduli space of generalized holomorphic curves gives a cobordism between the moduli spaces before and after the instant with the singular curve. 
\end{lma}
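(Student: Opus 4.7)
The plan is to analyze a local model near the critical time $t_0 \in I$ at which a boundary node appears, and to show that the two a priori distinct codimension-one wall contributions labelled (2) and (5) in Lemma~\ref{l:codim1walls} arise simultaneously from this single event and fit together into a smooth cobordism of generalized-curve moduli spaces across $t_0$. First I would set up the local picture. Near $t_0$ the family $u_t$ of holomorphic curves degenerates so that two smooth arcs of $\partial u_{t_0}$ meet transversally at a single boundary-node point $p \in L_K$. On one side of $t_0$ the resolution is a single smooth curve whose boundary has a transverse self-crossing at $p_t \to p$; on the other side the resolution is the hyperbolic boundary splitting, yielding two smooth holomorphic pieces $u_t^{(1)}, u_t^{(2)}$ joined at a node $q_t \to p$. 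This is the standard local picture of a boundary-nodal degeneration.

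Next I would identify the generalized-curve decorations on either side. On the split side, the two vertices $u_t^{(1)}, u_t^{(2)}$ of the underlying graph $\Gamma$ are joined by an edge whose natural decoration is the intersection point of $\partial u_t^{(1)}$ with $\sigma_{u_t^{(2)}}$ that limits to $q_t$; here I use that the node $q_t$ lies on $\partial u_t^{(2)} \subset \partial \sigma_{u_t^{(2)}}$, so an interior intersection in $\partial u_t^{(1)} \cap \sigma_{u_t^{(2)}}$ near $q_t$ appears or disappears as $t$ crosses $t_0$. On the non-split side, the self-crossing of $\partial u_t$ at $p_t$ is an intersection of $(\partial u_t)_\nu$ with $\sigma_{u_t}$ that runs out through $\partial u_t$ as $t$ crosses $t_0$; this is precisely a degeneration of type (5) attached to a self-loop edge at the single vertex $u_t$. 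The two generalized curves, the two-vertex one on the split side and the single-vertex-with-loop on the non-split side, both carry the same edge weight $\tfrac12$ from the prefactor $2^{-|E(\Gamma)|}$ in the weight formula of Section~\ref{sec:sftpotential}, and both edge decorations limit to $p$ as $t \to t_0$.

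The remaining step is the sign and weight check, which I would do by direct computation in the local model. The sign of the boundary self-crossing at $p_t$ is determined by the cyclic order of the two branches of $\partial u_t$ meeting at $p_t$ together with the choice of the normal $\nu$, while the sign of the split-edge intersection $\partial u_t^{(1)} \cdot \sigma_{u_t^{(2)}}$ is determined by the same local data via the standard boundary-node gluing. Comparing the two from the local model would show that the signed counts agree in the limit, so the two generalized curves assemble into a single oriented cobordism across $t_0$ in the moduli space $\mathcal{M}(\mathbf{c}^+)$.

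The main obstacle, as flagged by the authors in Remark~\ref{rmk:parallelchains}, is coherence with the abstract perturbation scheme and with the inductively chosen bounding chains and shifting vector fields $\nu$. One needs these choices to extend smoothly across the node so that no spurious additional interior or boundary incidences are created near $t_0$, and for multiply covered or non-injective curves the parallel-copy bookkeeping must propagate through the node consistently. Granted this coherence, the local identification above produces the desired cobordism, and the two wall contributions (2) and (5) cancel in the boundary of the 1-dimensional moduli space.
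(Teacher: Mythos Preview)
Your local model is off, and as a result you miss the actual mechanism. A boundary-nodal curve has two smoothings $u_+$ and $u_-$, obtained by resolving the crossing of the two boundary arcs in the two possible ways; in the $1$-parameter family one sees $u_+$ for $t<t_0$ and $u_-$ for $t>t_0$. Neither resolution has a residual transverse self-crossing of its boundary near $p$ (that is exactly what resolving removes), and the node need not be separating, so there is no reason for one side to consist of two pieces. The ``boundary crossing'' of type~(5) that accompanies~(2) is not a self-crossing of $\partial u_t$, but the appearance or disappearance, as $t$ passes $t_0$, of the near-nodal intersection point between one branch of $\partial u_\pm$ and the bounding chain $\sigma_{u_\pm}$ swept out from the other branch. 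So on both sides the relevant generalized curves sit over a single vertex carrying $u_\pm$, with self-loop edges decorated by that near-nodal intersection; your two-vertex picture is at best the special separating sub-case, and even there your description of the other side is not correct.

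More importantly, you are missing the gluing analysis, which is the substance of the argument. The nodal curve can be glued to itself at $m$ copies of the node for every $m\ge 1$, producing actual holomorphic curves of Euler characteristic $\chi(u)-m$; to achieve transversality at arbitrary Euler characteristic one needs a perturbation that time-orders these crossings. The paper computes that the $m$-fold gluing on the $u_\pm$-side contributes with weight $(\pm\epsilon)^{m}\frac{1}{2^{m}m!}g_s^{m}$, the sign flipping between the two resolutions because the near-nodal intersection sign does. Summed over $m$ this gives the factor $e^{\frac12 g_s}-e^{-\frac12 g_s}$, which is precisely what is needed to match the change in the generalized-curve count across $t_0$ at every Euler characteristic. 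Your single-edge matching is only the $m=1$ term of this and by itself does not produce a cobordism for the lower-$\chi$ strata.
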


\begin{proof}
At the hyperbolic boundary splitting we find a holomorphic curve with a double point that can be resolved in two ways, $u_+$ and $u_-$. Consider the two moduli spaces corresponding to $m$ insertions at the corresponding intersection points between $\pa u_+$ and $\sigma_{u_-}$ and between $\pa u_-$ and $\sigma_{u_{+}}$. 

To obtain transversality at this singular curve for any Euler characteristic we must separate the intersection points. To this end, we use an abstract perturbation scheme that time-orders the crossings. We then employ usual gluing at the now distinct crossings. 

Consider gluing at $m$ intersection points as $\pa u_{-}$ crosses $\sigma_{u_+}$. This gives a curve of Euler characteristic decreased by $m$ and orientation sign $\epsilon^{m}$, $\epsilon=\pm 1$. Furthermore, at the gluing, the ordering permutation acts on the gluing strips and each intersection point is weighted by $\frac12$ since we count pairs of intersections between boundaries and bounding chains twice (i.e., both $\partial u\cap \sigma_v$ and $\partial v\cap \sigma_u$ contribute). This gives a moduli space of additional weight
\[
\epsilon^{m}\frac{1}{2^{m}m!} g_{s}^{m}.
\]  
The only difference between these configurations and those associated with the opposite crossing is the orientation sign. Hence the other gluing when $\pa u_{+}$ crosses $\sigma_{u_{-}}$ gives the weight
\[
(-1)^{m}\epsilon^{m}\frac{1}{2^{m}m!} g_{s}^{m}.
\]

Noting that the original moduli space is oriented towards the crossing for one configuration and away from it for the other we find that the two gluings cancel if $m$ is even and give a new curve of Euler characteristic decreased by $m$ and of weight $\frac{2}{2^{m}m!}$ if $m$ is odd. The two resulting moduli spaces without boundary are depicted in Figure \ref{fig:multiplegluing}. Counting ends of moduli spaces we find that the curves resulting from gluing at the crossing count with a factor
$e^{\frac12 g_{s}}-e^{-\frac12 g_{s}}$.
\end{proof}

\begin{figure}
\labellist
\small\hair 2pt
\pinlabel $1$ at 39 87
\pinlabel $\frac{1}{2}$ at 18 34
\pinlabel $\frac{1}{2}$ at 43 34
\pinlabel {$m$ odd} at 31 6
\pinlabel {$m$ even} at 117 6
\endlabellist
	\centering
	\includegraphics[width=.6\linewidth]{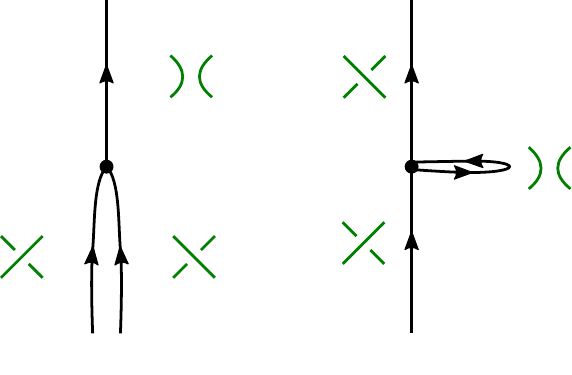}
	\caption{1-dimensional moduli spaces near boundary crossings/boundary splittings.}
	\label{fig:multiplegluing}
\end{figure}

Next we consider the case of elliptic boundary splitting $(3)$, which cancels with boundary crossing $(6)$.   

\begin{lma}\label{l:elliptic}
	A curve which intersects $L_{K}$ in an interior point in a generic 1-parameter family appears both as an elliptic splitting $(3)$ and as a interior crossing $(6)$. The moduli space of generalized holomorphic curves gives a cobordism between the moduli spaces before and after the instant with the curve that intersects $L_{K}$. 
\end{lma}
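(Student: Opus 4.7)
The plan is to follow the template of Lemma~\ref{l:boundarysplitandcross}, with an interior self-intersection of the curve on $L_K$ replacing the boundary node of the hyperbolic case, and with the 4-chain $C_K$ replacing the role of the bounding chains $\sigma_{u_\pm}$. The key observation is that a generic 1-parameter family producing a curve with a single interior point on $L_K$ can be interpreted in two ways at once: as an elliptic splitting (3), in which two sheets of the holomorphic curve touch at an interior point of $L_K$ and resolve into a pair of branches $u^+, u^-$ meeting at that point; and as an interior crossing (6), in which an interior marked point of the shifted curve $u_{J\nu}$ moves across $L_K = \tfrac12\partial C_K$. I want to show that the contributions from these two viewpoints assemble into a cobordism across the singular instant.

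First, I would set up the local model near the singular curve: on the split side of the parameter the curve decomposes into two branches glued at an interior point on $L_K$, each carrying its own bounding chain and each contributing nearby intersections with $C_K$, with itself, and with the other branch. Next, following the time-ordered gluing procedure of Lemma~\ref{l:boundarysplitandcross}, I would enumerate the configurations obtained by gluing at $m$ of these extra intersection points. Each interior crossing near $L_K$ feeds into the self-linking through the $[u_{J\nu}]\cdot C_K$ term; because $\partial C_K = 2\cdot L_K$, the doubling combines with the $\tfrac12$ in $e^{\tfrac12 g_s\slk(v)}$ to produce the same net $\tfrac12$ per edge as in the hyperbolic case. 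Summing over $m$ on the two sides with the appropriate alternating signs, the even-$m$ contributions cancel while the odd-$m$ contributions add, yielding a net factor of $e^{g_s/2}-e^{-g_s/2}$ type analogous to that obtained in the proof of Lemma~\ref{l:boundarysplitandcross}.

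The smooth-side contribution from degeneration (6) should then balance this factor. As the interior point of $u_{t,J\nu}$ crosses $L_K$, the intersection number $[u_{J\nu}]\cdot C_K$ jumps by $\pm 2$, so $\slk(u_t)$ jumps by $\pm 1$, multiplying the weight of the corresponding generalized curve by $e^{\pm g_s/2}$. I would then verify that these smooth-side jumps precisely cancel the odd-$m$ residue produced by the split-side gluings, leaving only the intended cobordism between the moduli spaces before and after the singular instant.

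The main obstacle will be keeping track of orientations and combinatorial factors: the interplay between the factor $2$ from $\partial C_K = 2\cdot L_K$, the factor $\tfrac12$ in the self-linking exponential, the symmetrization and permutation factors from time-ordered gluings, and the intersection signs must line up exactly for the cancellation to go through. As flagged in Remark~\ref{rmk:parallelchains}, a fully rigorous version of this combinatorial calculation depends on the fine structure of the abstract perturbation scheme and on the construction of parallel chains in the sense of~\cite{ESh}, so the argument here should be regarded as a formal outline in the spirit of the preceding lemma rather than a complete proof.
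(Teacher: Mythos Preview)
Your overall strategy---mirror the hyperbolic argument, with $C_K$ playing the role of the bounding chains and the factor $\tfrac12$ now arising from $\partial C_K = 2\cdot L_K$---is exactly the paper's approach. But your local picture of the degeneration is off in a way that matters.

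You describe the singular curve as ``decomposing into two branches glued at an interior point on $L_K$'', with $u^+$ and $u^-$ the two branches. That is the hyperbolic picture transplanted verbatim, and it is not what happens here. In the elliptic case the curve does not split into two pieces: the two resolutions $u_+$ and $u_-$ are each the \emph{entire} curve, pushed slightly off $L_K$ in the $+J\nabla f$ and $-J\nabla f$ directions respectively, so that $u_+$ picks up a nearby intersection with one sheet of $C_K$ and $u_-$ with the other. The gluing problem is not ``gluing the two branches at $m$ intersection points''; it is gluing a \emph{constant disk} at the interior intersection point with $L_K$ to the one-parameter family of curves (repeatedly, time-ordered, to reach any Euler characteristic). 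This is the elliptic bubble: an interior point touching $L_K$ opens into a new small boundary circle bounding a constant disk. Once you set up the gluing this way, the weight computation runs exactly as in the hyperbolic case and produces the factor $e^{\frac12 g_s}-e^{-\frac12 g_s}$.

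There is also an arithmetic slip on the smooth side: you write that $[u_{J\nu}]\cdot C_K$ jumps by $\pm 2$ and conclude that $\slk$ jumps by $\pm 1$. But $\slk(u,u)=[\partial u_\nu]\cdot\sigma_u + [u_{J\nu}]\cdot C_K$, so a jump of $2$ in the second summand is a jump of $2$ in $\slk$. The correct bookkeeping is that $u_+\cdot C_K$ and $u_-\cdot C_K$ differ by one unit each from the baseline (one intersection with sign $+1$, one with sign $-1$), so the two resolved weights carry factors $e^{+\frac12 g_s}$ and $e^{-\frac12 g_s}$ respectively; their difference is again $e^{\frac12 g_s}-e^{-\frac12 g_s}$, matching the gluing side. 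Your final caveat about the dependence on the perturbation scheme is well placed and matches the paper's own disclaimer.
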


\begin{proof}
The proof is similar to that of Lemma \ref{l:boundarysplitandcross}. The curve with an interior point mapping to $L_K$ can be resolved in two ways, one curve $u_+$ that intersects $C_{K}$ at a point in the direction $+J\nabla f$ and one $u_-$ that intersects $C_{K}$ at a point in the direction $-J\nabla f$. 

We also have a gluing problem: a constant disk at the intersection point can be glued to the family of curves at the intersection. As in the hyperbolic case, in order to get transversality at any Euler characteristic we must allow for this to happen many times. To that end we use an abstract perturbation that time orders $C_{K}$ and intersection points. We then apply usual gluing.
Since the intersection sign is part of the orientation data for the gluing problem, the calculation of weights is exactly as in the hyperbolic case above, where this time the $\frac12$-factors come from the boundary of $C_K$ being twice $L_K$, $\pa C_{K}=2[L_K]$. As there, we conclude that gluing corresponds to multiplication by
$e^{\frac12 g_{s}}-e^{-\frac12 g_{s}}$.
The same factor appears in the difference of counts when $u_{+}\cdot C_{K}$ is replaced with $u_{-}\cdot C_{K}$. The lemma follows.
\end{proof}

\begin{rmk}\label{rmk:parallelchains}
Lemmas \ref{l:boundarysplitandcross} and \ref{l:elliptic} use certain properties of the perturbation scheme for holomorphic curves. Except for usual general position properties we use time ordering of intersections to derive the contribution at the gluing. To complete the argument one would need to show that there actually exists such a perturbation scheme that also satisfies all the usual general position properties. 

From the point of view of \cite{ESh}, the above treatment can be understood as follows. In \cite{ESh} curve components of symplectic area zero are left unperturbed and only so called bare curves (curves with no components of symplectic area zero) are counted, but in a way that takes into account contributions from constant curves attached. The arguments above correspond to keeping constants unperturbed, turning a perturbation on near the degenerate instance, and then turning them back off.   
\end{rmk}

Next we consider tangencies $(7)$ of the boundary to the gradient vector field that cancel with an interior intersection with $C_{K}$ moving to the boundary $(8)$.
\begin{lma}\label{l:encomplex}
A curve with boundary tangent to $\nabla f$ has a degenerate intersection with $C_{K}$. The moduli space of generalized holomorphic curves gives a cobordism between the moduli spaces before and after the tangency instant. 
\end{lma}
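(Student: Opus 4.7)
The plan is to establish a local normal form near the tangency point, show that the boundary tangency of $\pa u$ to $\nabla f$ forces a simultaneous interior kink of $u$ with $C_K$, and then check that the contributions to the self-linking number $\slk(u,u) = [\pa u_\nu]\cdot \sigma_u + [u_{J\nu}]\cdot C_K$ from the boundary side and the interior side change by equal and opposite amounts as one crosses the tangency instant, so that the exponential weight $e^{\frac12 g_s \slk(v)}$ attached to every simple graph at the vertex $v$ is preserved.

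First I would establish the content of the first sentence. At a point $p\in \pa u$ where the tangent $\tau$ of $\pa u$ is parallel to $\nabla f$, the curve $u$ extends into $X$ along $J\tau = \pm J\nabla f$ by $J$-holomorphicity. Since $C_K$ is constructed near $\pa C_K = 2\cdot L_K$ by translating $L_K$ along $\pm J\nabla f$, the tangent plane of $u$ at $p$ is contained in the tangent space of $C_K$ at $p$, so $u$ has a non-transverse intersection with $C_K$ located exactly on the boundary. Under a generic perturbation of the shift, this intersection is resolved into interior intersections of $u_{J\nu}$ with $C_K$ near $p$ that appear or disappear across $L_K$, which is precisely the degeneration $(8)$.

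Second I would compute the local jumps. Choose coordinates near $p$ in which $L_K$ is a $3$-plane, $\nabla f = \pa_{x_1}$ at $p$, and $\pa u$ in a generic $1$-parameter family has a quadratic tangency to $\pa_{x_1}$ at $t = t_0$. The chain $\sigma_u$, consisting of flow lines of $\nabla f$ off $\pa u$, then has a fold singularity at $t_0$, and the count $[\pa u_\nu] \cdot \sigma_u$ near $p$ jumps by $\pm 2$; the sign is determined by the choice of shifting vector field $\nu$ and by the type of tangency. Analogously, $C_K$ is locally foliated by flow lines of $\pm J\nabla f$ off $L_K$, and the count $[u_{J\nu}]\cdot C_K$ near $p$ jumps by $\mp 2$, because the two interior intersection points of $u_{J\nu}$ with $C_K$ closest to $L_K$ either cross into or come out of $L_K$ at the tangency instant. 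The factor $2$ on both sides is forced by $\pa C_K = 2\cdot L_K$: both resolutions $\pm J\nu$ of the interior shift must be counted.

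Third, a direct orientation check shows that the two jumps cancel. The signs of the boundary contributions are read off from the oriented frame $(\nabla f,\nu,\tau)$ along $\pa u$, while the signs of the interior contributions are read off from the relative orientation of $u_{J\nu}$ and $C_K$. The link between them is provided by the relation $J\tau$ equals the inward normal to $u$ at $\pa u$, which, combined with the construction of $C_K$ along $\pm J\nabla f$, yields $\Delta\,[\pa u_\nu]\cdot \sigma_u = -\Delta\,[u_{J\nu}]\cdot C_K$. Therefore $\slk(u,u)$ is locally constant across the tangency, the weight $e^{\frac12 g_s \slk(v)}$ is unchanged, and the $1$-parameter family of generalized curves produces the desired cobordism between the moduli spaces before and after the kink, with edges of the two self-linking types being reshuffled but keeping the total count. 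The main obstacle will be the careful sign and orientation analysis in this step, which parallels the bookkeeping in Lemmas \ref{l:boundarysplitandcross} and \ref{l:elliptic} and likewise rests on properties of the abstract perturbation scheme, in particular on time-ordering intersections so that multiply-edged configurations can be tracked edge by edge across the tangency, cf.\ Remark \ref{rmk:parallelchains}. Granted such a perturbation scheme, the argument reduces to the local normal-form calculation sketched above.
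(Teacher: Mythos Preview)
Your overall strategy matches the paper's: no holomorphic gluing is involved, and the point is simply that a boundary self-linking contribution $[\pa u_\nu]\cdot\sigma_u$ is traded for an interior contribution $[u_{J\nu}]\cdot C_K$ of the same sign, so that $\slk(u,u)$ and hence the weight $e^{\frac12 g_s\,\slk}$ are unchanged across the tangency.

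However, your numerics are off. The jump on each side is $\pm 1$, not $\pm 2$. The paper's explicit cubic model (Remark~\ref{r:qubicmodel}) makes this clear: for $u_\pm(z)=(z^2,z(z^2+s),\pm z)$ with $\nabla f=\partial_{x_3}$, the projection of $\pa u$ to the $(x_1,x_2)$-plane has exactly one double point at $z=\pm\sqrt{-s}$ when $s<0$, and this disappears at $s=0$; conversely for $s>0$ there is exactly one interior intersection with $C_K$, at $z=i\sqrt{s}$, and it lies on $C_K^+$ or $C_K^-$ according to the sign of $u_\pm$, not on both. Your argument that $\pa C_K=2\cdot L_K$ forces a factor of $2$ on the interior side is incorrect: the shift $J\nu$ is a single direction, so $u_{J\nu}$ meets only one of the two sheets of $C_K$ near $L_K$. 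Likewise, the ``fold'' in $\sigma_u$ does not by itself give a jump of $2$ in $[\pa u_\nu]\cdot\sigma_u$; the ordering along $\nabla f$ selects one of the two preimages of the double point. Since you make the error symmetrically, your conclusion survives, but the mechanism you describe is not the right one.

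Also, you over-complicate the last step. Unlike Lemmas~\ref{l:boundarysplitandcross} and~\ref{l:elliptic}, this lemma does \emph{not} rely on time-ordering or subtleties of the abstract perturbation scheme; the paper says so explicitly. There is no gluing here, only a pointwise swap of one intersection type for another, and the sign check is the elementary local computation in Remark~\ref{r:qubicmodel}. Your appeal to Remark~\ref{rmk:parallelchains} is unnecessary.
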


\begin{proof}
Here the change does not involve gluing of holomorphic curves. It is simply exchanging an intersection in $\partial u_{\nu}\cap \sigma_u$ with one in $u_{J\nu}\cap C_K$. More precisely, pick orientation so that the curve right before the tangency moment has an intersection between $\partial u_{\nu}$ and $\sigma_{u}$ that disappears after the tangency. Then there is a corresponding intersection between $u_{J\nu}$ and $C_{K}$ born at the tangency moment, see Remark \ref{r:qubicmodel}. The contribution of both these configurations corresponds to multiplication by $e^{\epsilon \frac12 g_{s}}$, where $\epsilon=\pm1$. The lemma follows.  
\end{proof}

\begin{rmk}\label{r:qubicmodel}
To get a local model for Lemma \ref{l:encomplex} consider local coordinates 
\[ 
(z_{1},z_{2},z_{3})=(x_{1}+iy_{1},x_{2}+iy_{2},x_{3}+iy_{3})\in\C^{3}
\]
on $X$ with $L_{K}$ corresponding to $\R^{3}$. Assume that the gradient of $f$ is $\nabla f=\partial_{x_{3}}$. Then $C_{K}$ is locally given by $C_{K}=C_{K}^{+}+C_{K}^{-}$, where 
\[ 
C_{K}^{\pm}=\pm\{y_{2}=y_{3}=0\}.
\]
A generic family of holomorphic curves with a tangency with $\partial_{x_{3}}$ is given by the map $u_{\pm}\colon H\to\C^{3}$ ($H$ is the upper half plane),
\[ 
u_{\pm}(z)=(z^{2},z(z^{2}+s),\pm z).
\]
For $s<0$ the projection of $u|_{\partial H}$ to the $(x_{1},x_{2})$ has a double point at $z=\pm\sqrt{-s}$ that contributes to linking according to the sign of $u_{\pm}$. At $s=0$ the boundary has a tangency with $\partial_{x_{3}}$ and at $s>0$, $u_{\pm}(H)$ intersects $C_{K}^{\pm}$ at $u(i\sqrt{s})$ with the sign that agrees with the liking sign before the tangency.

\end{rmk}

We finally consider the degeneration $(9)$ when the holomorphic curve becomes tangent to the capping path. 

\begin{lma}\label{l:capping}
The moduli space of generalized holomorphic curves gives a cobordism between the moduli spaces before and after an instant where the Fourier coefficient of the leading asymptotic vanishes (corresponding to a tangency with a capping path). 
\end{lma}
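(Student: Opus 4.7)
The plan is to analyze the degeneration locally at the positive puncture and to show that all the combinatorial data entering the weighted moduli space count (the bounding chain $\sigma_u$, the intersection numbers with other bounding chains, and the intersections with $C_K$) extend continuously across the instant where the leading Fourier coefficient vanishes.

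First, I would set up local holomorphic strip coordinates $(s,t)\in[0,\infty)\times[0,1]$ on the domain near the puncture and a suitable trivialization near the Reeb chord endpoint in $ST^{\ast}S^{3}$. In these coordinates the holomorphic curve has the standard asymptotic expansion
\[
u(s,t) = c(t) + e^{-\lambda s}\bigl(a\,\phi(t) + O(e^{-\mu s})\bigr),
\]
where $c(t)$ parameterizes the Reeb chord, $\phi$ is the leading eigenfunction of the asymptotic operator with smallest positive eigenvalue $\lambda$, and $a$ is the leading Fourier coefficient. The degeneration $(9)$ is the instant when $a$ transversely vanishes in the one-parameter family, at which moment the asymptotic tangent direction of $u(\partial\Sigma)$ at the Reeb chord endpoint rotates through the subleading eigendirection.

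Second, I would check continuity of the truncation-and-capping that defines $\sigma_u$. The capping chain $\delta_j$ was arranged in Section \ref{s:adddata} to be holomorphic along its boundary, agreeing with the complexification of a real analytic capping arc whose tangent at the Reeb chord endpoint is distinct from both local eigendirections. Using this genericity, the only possible discontinuity in the closed-off boundary $u(\partial\Sigma)\cup\bigcup_{j}\delta_{j}$ in $L_K$ as $a$ crosses zero is by a small loop encircling the puncture point, and such a loop is null-homotopic in $L_K\approx S^{1}\times\R^{2}$. Hence the chain $\sigma_u'$, its ideal boundary $\partial_{\infty}\sigma_u'\subset \Lambda_K$, the integers $m_j$ appearing in \eqref{eq:defboundingchain1}, and therefore $\sigma_u$ itself, all extend continuously across $a=0$.

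Third, I would verify that no other contribution to the count jumps. Intersections $(\partial u)\cdot \sigma_v$ with bounding chains for other curves $v$ are concentrated in a compact region away from the Reeb chord endpoint and are therefore continuous across the degeneration. Any putative intersection $u_{J\nu}\cdot C_K$ close to the puncture is excluded by the lemma in Section \ref{s:adddata} asserting that a uniform neighborhood of the boundary in the trivial Reeb chord strip is disjoint from $C_K$; by exponential convergence of $u$ to that strip at the puncture, the disjointness persists for all sufficiently large $s$ uniformly as $a$ varies. Morse-theoretic crossings with $W^{\mathrm{u}}(\kappa_j^{1})$ take place near $K$ and are likewise unaffected. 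Combining these observations, the weighted count of generalized holomorphic curves is continuous across the degeneration, and the 1-dimensional moduli space provides the required cobordism.

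The main obstacle is the local asymptotic analysis: one must show that, even though the leading Fourier coefficient $a$ vanishes at the degenerate instant, the next nonvanishing term gives enough control to produce a continuously varying closed loop in $L_K$ under truncation-and-capping, with any discrete jump being contractible in $L_K$. This is the analogue for positive punctures of the local model in Remark \ref{r:qubicmodel}, and would be established by writing an explicit family $u_a$ with $a$ crossing zero and comparing the resulting closed curves in $L_K$ on either side of $a=0$ up to homotopy.
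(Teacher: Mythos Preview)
Your argument has a genuine gap: you claim that nothing changes near the puncture, but in fact both the capping-path term and the $C_K$ term in the self-linking number $\slk(u)=[\partial u_{\nu}]\cdot\sigma_u+[u_{J\nu}]\cdot C_K$ can jump at this instant; the content of the lemma is that these two jumps cancel.

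Concretely, as the leading Fourier coefficient passes through zero, the boundary of $u$ can cross the capping path (depending on which quadrant the capping path occupies relative to the leading and subleading eigendirections), so $[\partial u_{\nu}]\cdot\sigma_u$ may change by $\pm 1$. Your observation that any jump in the closed-off boundary is a null-homotopic loop in $L_K$ controls the homology class of $\sigma_u$, but not this intersection number. At the same time, your third step is wrong: the lemma you cite says a neighborhood of the boundary of the \emph{trivial} strip is disjoint from $C_K$, but $u$ is not the trivial strip---its subleading asymptotic term gives the imaginary part a nonzero component in the contact-plane direction, and the paper's local model
\[
u(s+it)=\bigl(b e^{\beta(s+it)},\ \sigma e^{\alpha(s+it)},\ c(s+it)\bigr),\qquad C_K=\R^3+i\lambda(\epsilon_1,\epsilon_2,1),
\]
shows that $u$ does intersect $C_K$ near the puncture, with the intersection pattern governed by the signs of the Fourier coefficients in exactly the same way as the real crossing with the capping path. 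The two changes compensate, which is why the count is invariant; your argument, by asserting that neither change occurs, misses this mechanism entirely.
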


\begin{proof}
As in the proof of Lemma \ref{l:encomplex} there is no gluing of holomorphic disks involved. We show that the count remains invariant by a local calculation.

At moments of type $(9)$ there are two scenarios: either an intersection with the capping path disappears or not, depending on which quadrant the capping path lies in, see Figure \ref{fig:realcapcross}.

\begin{figure}
\labellist
\small\hair 2pt
\pinlabel {Leading asymptotic} at 96 182
\pinlabel {direction} at 96 175
\pinlabel {\color{red} Capping path} at 108 160
\pinlabel {\color{blue} Boundary of} at 87 132
\pinlabel {\color{blue} holomorphic curve} at 87 123
\pinlabel {\color{black} Subleading asymptotic} at 100 95
\pinlabel {direction} at 100 86
\endlabellist
	\centering
	\includegraphics[width=.7\linewidth]{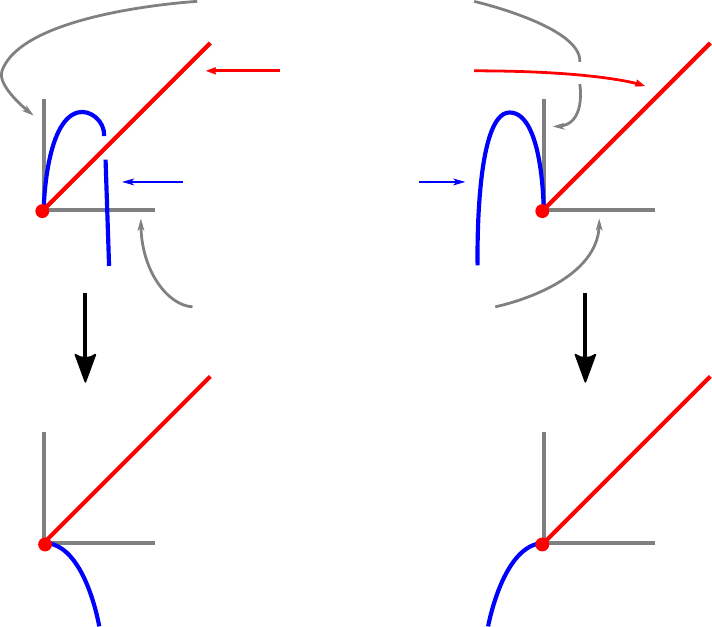}
	\caption{The crossings at a Reeb chord endpoint. The change or invariance of the real crossing are compensated by the change or invariance of imaginary crossings.}
	\label{fig:realcapcross}
\end{figure}

We have a similar boundary phenomenon for interior intersections that cross the boundary. To see this we carry out the calculation in coordinates adapted to the leading and sub-leading directions in Figure \ref{fig:realcapcross}. Up to exponentially small error we can write the holomorphic curve near the puncture as follows:
\[ 
s+it \mapsto \left(\begin{matrix}
b e^{\beta(s+it)} \\
\sigma e^{\alpha(s+it)} \\
c(s+it)
\end{matrix}\right), 
\]
where $\sigma\in (-\epsilon,\epsilon)$.
The imaginary part of the curve is thus given by
\[ 
\left(\begin{matrix}
b e^{\beta s}\sin t \\
\sigma e^{\alpha s}\sin t \\
ct
\end{matrix}\right).
\]
The $4$-chain filling $\R\times\Lambda_{K}$ is locally given by
\[ 
\R^{3}+i\lambda(\epsilon_{1},\epsilon_{2},1),
\]
where $\lambda$ is real and $\epsilon_{j}$ are small. Dividing the imaginary part by $t$ we find that the intersection pattern between the 4-chain and the curve is exactly as in Figure \ref{fig:realcapcross}.  
\end{proof}

Lemmas \ref{l:crossunstable}--\ref{l:capping} show that splitting into a two-level curve is effectively the only codimension one boundary for a 1-dimensional moduli space of generalized holomorphic curves. Combined, these establish Theorem \ref{t:mainmodulispace}.

\subsection{The SFT equation}
The above description of the boundary of 1-dimensional moduli spaces of generalized holomorphic curves leads to the SFT-equation \eqref{eq:master}. 

We let $\mathbf{H}_{K}$ denote the count of generalized rigid holomorphic curves $\Gamma_{\mathbf{u}}$ that appear in the upper level of a two-level curve in the boundary. Such a generalized curve lies over a graph that has a main vertex corresponding to a curve of dimension 1, which in this case is a curve that is rigid up to $\R$-translation; at all other vertices there are trivial Reeb chord strips.

Consider such a generalized holomorphic curve $\Gamma_{\mathbf{u}}$, rigid up to translation in the symplectization. We write $\mathbf{c}^{+}(\mathbf{u})$ and $\mathbf{c}^{-}(\mathbf{u})$ for the monomials of positive and negative punctures of $\Gamma_{\mathbf{u}}$, write $w(\mathbf{u})$ for the weight of $\Gamma_{\mathbf{u}}$, $n(\mathbf{u})\cdot x+m(\mathbf{u})\cdot p+r(\mathbf{u})t$ for its homology class, $\chi(\mathbf{u})$ for the Euler characteristic of the generalized curve of $\Gamma_{u}$. Define
\[
\mathbf{H}_{K}=\sum_{\dim(\Gamma_{\mathbf{u}})=1} w(u)\;  g_{s}^{-\chi(\mathbf{u})+\ell(\mathbf{c}^{+}(\mathbf{u}))}\; e^{n(\mathbf{u})\cdot x+m(\mathbf{u})\cdot p+r(\mathbf{u})t}\;\partial_{\mathbf{c}^{-}(\mathbf{u})}\mathbf{c}^{+}(\mathbf{u}),  
\]
where the sum ranges over all generalized holomorphic curves. As above this formula can be simplified to a sum over simpler graphs with more elaborate weights on edges. For example we can rewrite it as a sum over graphs $\Gamma_{\mathbf{u}}'$ without edges connecting the main vertex, corresponding to the 1-dimensional curve $u$ that contains the positive puncture of degree 1, as follows: 
\[
\mathbf{H}_{K}=\sum_{\dim(\Gamma_{\mathbf{u}}')=1} w(u)  g_{s}^{-\chi(\mathbf{u})+\ell(\mathbf{c}^{+}(\mathbf{u}))}\;e^{\frac12\slk(u)g_s}\; e^{n(\mathbf{u})\cdot x+m(\mathbf{u})\cdot p+r(\mathbf{u})t}\;\partial_{\mathbf{c}^{-}(\mathbf{u})}\mathbf{c}^{+}(\mathbf{u}),  
\]
where $\slk(u)$ for the self-linking number of the curve $u$ at the main vertex.

\begin{rmk}
If we require special properties of the perturbation scheme related to avoiding self-linking between trivial Reeb chord strips, this formula can likely be further simplified. We leave such matters to future studies and work out the relevant contributions here only in the examples we study. This problem is related to the algebraic problem of finding out how detailed a knowledge of the Hamiltionian is needed to extract the recursion relation. In the examples of the trefoil knot and the Hopf link only a small piece of the Hamiltonian is used.   
\end{rmk}

\begin{lma}\label{l:quantization}
Consider a curve $C$ at infinity in class $n\cdot x+m\cdot p+rt$. The count of the corresponding generalized curves with insertion of bounding cochains along $C$ equals
\[ 
e^{-\mathbf{F}_{K}} e^{n\cdot x}Q^{r} e^{m\cdot g_{s}\frac{\partial}{\partial x}}e^{\mathbf{F}_{K}},
\]
where $m\cdot\frac{\partial}{\partial x}=\sum_{j=1}^{k} m_{j}\frac{\partial}{\partial x_{j}}$.
\end{lma}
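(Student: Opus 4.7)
The plan is to enumerate the generalized curves extending the prescribed boundary-at-infinity curve $C$ by attached rigid curves and to identify this connected count with the operator expression by recognizing $e^{m\cdot g_{s}\pa/\pa x}$ as a Taylor shift acting on $\mathbf{F}_{K}$. The proof combines a geometric computation of linking numbers between $C$ and rigid curves with the standard combinatorial relation between connected simple graphs and their exponential.

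First I would unpack the insertion geometrically. By the bounding chain construction of Section~\ref{sec:boundingchains}, the chain $\sigma_{C}$ associated to $C$ has the form $\sigma_{C}'-\sum_{j}m_{j}W^{\mathrm{u}}(\kappa^{1}_{j})$, so that $\pa_{\infty}\sigma_{C}$ is purely longitudinal. A rigid curve $u\in\mathbf{F}_{K}$ of class $n(u)\cdot x+r(u)t$ attaches to $C$ through intersections $\pa u\cap\sigma_{C}$, and by the reciprocity $[\pa u]\cdot\sigma_{C}=[\pa C]\cdot\sigma_{u}$, together with the vanishing of the intersection of the two longitudinal curves $\pa_{\infty}\sigma_{C}$ and $\pa_{\infty}\sigma_{u}$ in $\Lambda_{K}$, the signed intersection count is determined by the meridian class $m\cdot p$ of $[\pa C]$ paired with the longitude class $n(u)\cdot x$ of $\pa_{\infty}\sigma_{u}$, yielding
\[
\lk(C,u)=m\cdot n(u).
\]

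Next, I would organize the enumeration using the simple-graph formulation at the end of Section~\ref{sec:sftpotential}. Treating $C$ as a distinguished vertex with vertex weight $e^{n\cdot x}Q^{r}$ and with edges to rigid curves weighted by $e^{g_{s}\lk(e)}-1$, the connected count around $C$ equals $e^{n\cdot x}Q^{r}\cdot\exp[\delta\mathbf{F}_{K}(C)]$ by the standard exponential formula for labeled simple graphs with a marked vertex, where
\[
\delta\mathbf{F}_{K}(C)=\sum_{u}w(u)\,g_{s}^{-\chi(u)}\,Q^{r(u)}\,e^{n(u)\cdot x}\bigl(e^{g_{s}\lk(C,u)}-1\bigr)
\]
and the sum runs over the connected simple-graph monomials making up $\mathbf{F}_{K}$. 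Substituting $\lk(C,u)=m\cdot n(u)$ from Step~1, this exponent equals $\mathbf{F}_{K}(x+mg_{s})-\mathbf{F}_{K}(x)$.

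Finally, using the Taylor shift identity $e^{m\cdot g_{s}\pa/\pa x}f(x)=f(x+mg_{s})$, the operator expression on the right-hand side computes to
\[
e^{-\mathbf{F}_{K}}\,e^{n\cdot x}Q^{r}\,e^{m\cdot g_{s}\pa/\pa x}\,e^{\mathbf{F}_{K}}=e^{n\cdot x}Q^{r}\,e^{\mathbf{F}_{K}(x+mg_{s})-\mathbf{F}_{K}(x)}=e^{n\cdot x}Q^{r}\,e^{\delta\mathbf{F}_{K}(C)},
\]
which matches the enumeration. The main obstacle is the geometric content of Step~1: showing that the signed count $\pa u\cap\sigma_{C}$ evaluates cleanly to $m\cdot n(u)$ requires using the genericity of the auxiliary Morse function, the careful choice of capping paths, and the orientation conventions fixed earlier, especially with the $-\sum_{j}m_{j}W^{\mathrm{u}}(\kappa^{1}_{j})$ correction to the naive $\sigma_{C}'$. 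Once this identity is in place, the combinatorial matching with the exponential operator is routine.
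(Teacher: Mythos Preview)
Your argument is correct and reaches the same conclusion as the paper, but the organization differs. The paper's proof is a direct three-line expansion: it identifies the order-$k$ term of $e^{m\cdot g_s\,\partial/\partial x}$ acting on $e^{\mathbf{F}_K}$ with the count of configurations having exactly $k$ bounding-chain insertions along the $m_j p_j$ part of $\partial C$, noting that a factor $\partial^{s}\mathbf{F}_K/\partial x_j^{s}$ records attaching one curve $s$ times. You instead pass through the simple-graph reformulation, compute the linking number $\lk(C,u)=m\cdot n(u)$ once and for all, and then invoke the Taylor-shift identity $e^{m\cdot g_s\,\partial/\partial x}e^{\mathbf{F}_K(x)}=e^{\mathbf{F}_K(x+mg_s)}$ to collapse the whole sum. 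Both routes encode the same combinatorics; yours makes the exponential structure and the role of the homology class $m\cdot p$ more transparent, while the paper's is quicker once one accepts the insertion interpretation of each derivative.

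One small point: your Step~1 invokes a bounding chain $\sigma_C$ for $C$ and the reciprocity $[\partial u]\cdot\sigma_C=[\partial C]\cdot\sigma_u$, but $C$ lives in the symplectization rather than in $(X,L_K)$, so the construction of Section~\ref{sec:boundingchains} does not literally apply to it. What is actually being counted is the intersection in $\Lambda_K$ of the curve $\partial C$ (in class $n\cdot x+m\cdot p$) with $\partial_\infty\sigma_u$ (in class $n(u)\cdot x$), which immediately gives $m\cdot n(u)$. This is the content the paper uses implicitly, and your worry that this step is the ``main obstacle'' is somewhat overstated: once the bounding chains $\sigma_u$ are set up with purely longitudinal $\partial_\infty\sigma_u$, the pairing is just torus intersection of homology classes.
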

\begin{proof}
To see this note that contributions from bounding chains of curves inserted $k$ times along $m_{j}p_{j}$ corresponds to multiplication by
\[ 
m_{j}^{k}\frac{1}{k!}g_{s}^{-k}\sum_{k_{1}+\dots+k_{j}=k}
\frac{\partial^{k_{1}} \mathbf{F}_{K}}{\partial x_{j}^{k_{1}}}
\dots
\frac{\partial^{k_{j}} \mathbf{F}_{K}}{\partial x_{j}^{k_{j}}}.
\]	
Here a factor
$\frac{\partial^{s} \mathbf{F}_{K}}{\partial x_{j}^{s}}$ corresponds to attaching the bounding chain of a curve $s$ times. The lemma follows. 	
\end{proof}	

With this lemma established we obtain the SFT equation for conormals $L_{K}$ in the resolved conifold $X$. More precisely we have the following.

\begin{thm}\label{t:sftmastereq}
If $K$ is a link and $L_{K}\subset X$ its conormal Lagrangian then the SFT equation
\begin{equation}\label{eq:sft}
e^{-\mathbf{F}_{K}} \ \mathbf{H}_{K}|_{p_{j}=g_s\frac{\partial}{\partial x_{j}}} \ e^{\mathbf{F}_{K}} =0
\end{equation}
holds.
\end{thm}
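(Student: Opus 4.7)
The plan is to deduce the master equation \eqref{eq:sft} as the algebraic expression of the statement that the oriented boundary of every compact $1$-dimensional moduli space of generalized holomorphic curves in $(X,L_K)$ counts to zero. All of the heavy geometric lifting has already been done: Theorem \ref{t:mainmodulispace} says that, after the cancellations of degenerations $(2)$--$(9)$, the only codimension-one boundary of $\mathcal{M}(\mathbf{c}^+)$ is SFT splitting at Reeb chords into a two-level configuration, and Lemma \ref{l:quantization} translates bounding-chain insertions into the differential operators $g_s \partial/\partial x_j$. So the proof is essentially a bookkeeping computation that reads off the boundary.

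First I would fix a word $\mathbf{c}^+$ with $|\mathbf{c}^+|=1$ and describe a boundary point of $\mathcal{M}(\mathbf{c}^+)$. By Theorem \ref{t:mainmodulispace}(a) it consists of: an upper level $\Gamma_{\mathbf{u}}$ in $\R\times ST^\ast S^3$, rigid modulo $\R$-translation, with positive punctures $\mathbf{c}^+$ and some negative punctures $\mathbf{c}^-$; and, glued below at the Reeb chords in $\mathbf{c}^-$ and along the bounding-chain data of $\Gamma_{\mathbf{u}}$ (specifically, along the meridian components of $\partial_\infty \sigma_{\mathbf{u}}$), a collection of rigid generalized curves in $(X,L_K)$. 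Writing the homology class of the upper level as $n(\mathbf{u})\cdot x + m(\mathbf{u})\cdot p + r(\mathbf{u})t$, the possible attachments at Reeb chords are enumerated by the differential operator $\partial_{\mathbf{c}^-(\mathbf{u})}$ acting on the generating function of rigid curves, and the possible attachments along bounding chains at the $m_j$-fold meridian are enumerated, by Lemma \ref{l:quantization}, by the action of $e^{m(\mathbf{u})\cdot g_s \partial/\partial x}$.

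Next I would assemble these pieces into a generating function identity. Since rigid curves in $(X,L_K)$ are counted (with signs and $g_s$-weights) by $\mathbf{F}_K$, the count of all disconnected configurations of rigid curves attached at the negative Reeb chords and bounding chains of a given upper level $\Gamma_{\mathbf{u}}$ is obtained by applying $\partial_{\mathbf{c}^-(\mathbf{u})}$ and $e^{m(\mathbf{u})\cdot g_s \partial/\partial x}$ to $e^{\mathbf{F}_K}$; the exponential accounts precisely for the disconnected multi-curve pieces attached to the single connected 1-parameter family. Summing over upper levels gives exactly $\mathbf{H}_K|_{p_j = g_s \partial/\partial x_j}\, e^{\mathbf{F}_K}$, with the Reeb-chord monomial $\mathbf{c}^+(\mathbf{u})$ and exponential factors $e^{n(\mathbf{u})\cdot x} Q^{r(\mathbf{u})}$ coming from the $\mathbf{H}_K$ itself. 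Finally, one must remove the contributions of rigid configurations that are not attached to the main $1$-dimensional component, i.e.\ floating disconnected rigid curves; these are subtracted by left multiplication by $e^{-\mathbf{F}_K}$, yielding the connected form
\[
e^{-\mathbf{F}_K}\, \mathbf{H}_K|_{p_j = g_s \partial/\partial x_j}\, e^{\mathbf{F}_K}.
\]
Because this is the algebraic count of the boundary $\partial \mathcal{M}(\mathbf{c}^+)$ and Theorem \ref{t:mainmodulispace} guarantees this is a compact oriented $1$-manifold with no other codimension-one boundary, this count is zero for each $\mathbf{c}^+$, which is \eqref{eq:sft}.

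The main obstacle, and what I expect to require the most care, is the combinatorial bookkeeping of signs, symmetry factors, and $g_s$-powers when many rigid curves attach simultaneously at the same Reeb chord or along the same meridian cycle. In particular, the transition from the two-level description of $\partial \mathcal{M}(\mathbf{c}^+)$ to the exponential generating function requires that (i)~multiple parallel attachments at a Reeb chord produce precisely the operator $\partial_{\mathbf{c}^-}$ applied to $e^{\mathbf{F}_K}$ with the right $1/k!$ and $g_s^{-\chi+\ell}$ weights, as is the case because the Hamiltonian was normalized to include a factor $g_s^{\ell(\mathbf{c}^+)}$; (ii)~multiple attachments along meridians reproduce the Taylor expansion of $e^{m\cdot g_s \partial/\partial x}$ acting on $e^{\mathbf{F}_K}$, which is exactly the content of Lemma \ref{l:quantization}; and (iii)~the subtraction of floating disconnected components is achieved by the prefactor $e^{-\mathbf{F}_K}$, which is the standard passage from disconnected to connected generating functions. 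Once these three matches are verified term by term, the master equation is forced by $\partial\partial = 0$ on moduli spaces.
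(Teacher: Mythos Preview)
Your proposal is correct and follows essentially the same approach as the paper's own proof: invoke Theorem~\ref{t:mainmodulispace}(a) to identify the boundary of the one-dimensional moduli spaces with two-level curves, invoke Lemma~\ref{l:quantization} to translate bounding-chain insertions into the operators $g_s\,\partial/\partial x_j$, and conclude by the vanishing of the algebraic count of ends of an oriented branched 1-manifold. The paper's proof is in fact a two-sentence compression of exactly this argument; your write-up simply spells out the combinatorial bookkeeping (disconnected-to-connected passage via the exponentials, matching of symmetry factors and $g_s$-powers) that the paper leaves implicit.
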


\begin{proof}
Theorem \ref{t:mainmodulispace}(a) and Lemma \ref{l:quantization} show that the terms in the left hand side of \eqref{eq:sft} count the ends of an oriented branched 1-manifold. The theorem follows.
\end{proof}    
  
\begin{rmk}
We point out that counting insertions of bounding cochains gives an enumerative geometric meaning to the standard quantization scheme $p_{j}=g_{s} \frac{\partial}{\partial x_{j}}$. See \cite[Section 3.3]{Ekholmoverview} for a related path integral argument.
\end{rmk}

\subsection{Framing and Gromov--Witten invariants}
Theorem \ref{t:mainmodulispace} implies that the Gromov--Witten potential is independent of the data used to define the moduli space of generalized holomorphic curves up to homotopy (e.g., the potential does not depend on the specific choice of almost complex structure or perturbation). As mentioned previously, large $N$ duality predicts that if $K$ is a link $K=K_{1}\cup\dots\cup K_{k}$ and $F=\mathbf{F}_{K}^{0}(x,Q)$, $x=(x_{1},\dots, x_{k})$ denotes its Gromov--Witten potential then
\[ 
\Psi_{K}(x,Q)=e^{F(x,Q)}=\sum_{n=(n_{1},\dots,n_{k})} H_{K;n}(e^{g_{s}},Q)e^{n\cdot x},
\]
where $H_{K;n}$ is the (unnormalized) HOMFLY-PT polynomial with the component $K_{j}$ colored by the $m_{j}^{\rm th}$ symmetric representation. It is well-known that the colored HOMFLY-PT polynomial depends on framing. We derive this dependence here using our definition of generalized holomorphic curves. 

Assume that $\Psi_{K}$ above is defined for a framing $(x,p)=(x_{1},p_{1},\dots,x_{k},p_{k})$ of $\Lambda_{K}$. Then other framings are given by
\[ 
(x',p')=\bigl(x_{1}+r_{1}p_{1},p_{1}, \ \dots \ , x_{k}+r_{k}p_{k},p_{k}\bigr),
\] 
where $r=(r_{1},\dots,r_{k})$ is a vector of integers. Let $\Psi_{K}^{r}(x',Q)$ denote the wave function defined using the framing $(x',p')$.  
\begin{thm}\label{t:framing}
If $\Psi_{K}(x,Q)$ is as above then
\[ 
\Psi_{K}^{r}(x',Q)=\sum_{n=(n_{1},\dots,n_{k})} H_{K;m}(e^{g_{s}},Q)\,  e^{(\sum_{j=1}^{k} n_{j}^{2}r_{j})g_{s}} e^{n\cdot x'}.
\]
\end{thm}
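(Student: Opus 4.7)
The framing change $(x,p)\mapsto (x',p)=(x+r\cdot p,p)$ is a change of basis in $H_1(\Lambda_K)$ that preserves the meridian and alters only the longitude. My plan is to trace how this reparametrization propagates through the generalized holomorphic curve theory by isolating the unique framing-dependent piece of the construction and recomputing the relevant intersection numbers. All other ingredients — the almost complex structure, the Morse function $f$, the shifting vector fields $\nu$, the $4$-chain $C_K$, the capping paths and Fourier data at Reeb chord endpoints, and the actual moduli spaces of rigid generalized curves — are built intrinsically and make no reference to a choice of longitude at infinity, so they remain unchanged.

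The one place where the framing enters is the bounding chain $\sigma_u$ of a holomorphic curve $u$: in \eqref{eq:defboundingchain1} one subtracts $m\cdot W^{\mathrm u}(\kappa^1)$ so that the ideal boundary $\partial_\infty\sigma_u$ lies in the purely longitudinal class. If $u$ has boundary of class $n_u\cdot x+m_u\cdot p$, then rewriting in the new basis gives class $n_u\cdot x'+(m_u-n_u\cdot r)\cdot p$, whence
\[
\sigma_u^{\mathrm{new}}-\sigma_u = (n_u\cdot r)\cdot W^{\mathrm u}(\kappa^1).
\]
Since $W^{\mathrm u}(\kappa^1_j)$ is a meridional disk and therefore intersects the longitude exactly once, $\partial u\cdot W^{\mathrm u}(\kappa^1_j)=n_{u,j}$, and I obtain the pointwise transformation laws
\[
\lk^{\mathrm{new}}(u,v)=\lk(u,v)+\sum_j n_{u,j}n_{v,j}r_j,\qquad
\slk^{\mathrm{new}}(u)=\slk(u)+\sum_j n_{u,j}^2 r_j,
\]
with the second term $[u_{J\nu}]\cdot C_K$ in $\slk(u)$ unaffected because $C_K$ is framing-independent.

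Next I would aggregate these pointwise changes using the simple-graph formula for the SFT potential. A rigid disconnected configuration contributing to the Fourier coefficient of $e^{n\cdot x'}$ in $\Psi_K^{\,r}=e^{\mathbf{F}_K^{0,\,\mathrm{new}}}$ has connected components of classes $n^{(1)},\dots,n^{(N)}$ with $\sum_i n^{(i)}=n$. Summing $W(\Delta)$ over all simple graphs with this vertex set collapses the edge factors $e^{g_s\lk(e)}-1$ together with the empty-edge contribution into the telescoped product $\prod_{i<j}e^{g_s\lk(u_i,u_j)}$, giving a total weight proportional to $\prod_i e^{\frac12 g_s\slk(u_i)}\prod_{i<j}e^{g_s\lk(u_i,u_j)}$. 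The framing-induced multiplier is then
\[
\prod_i e^{\frac12 g_s\sum_j n_{i,j}^2 r_j}\ \prod_{i<j} e^{g_s\sum_j n_{i,j}n_{j,j}r_j},
\]
which by the identity $\sum_i a_i^2+2\sum_{i<j}a_ia_j=(\sum_i a_i)^2$ depends only on the total class $n$ and not on the partition, extracting a common factor from the Fourier coefficient and identifying $H_{K;n}^r$ with $H_{K;n}$ up to a quadratic exponential in $n$.

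The main obstacle is the careful book-keeping of the $\frac12$ conventions: I need to check that the combination of the $\tfrac12 g_s\slk(v)$ vertex weights with the pairwise linking contributions indeed reproduces the claimed exponent $g_s\sum_j n_j^2 r_j$ and not its half, and in particular that no additional framing dependence is hidden in the second term $[u_{J\nu}]\cdot C_K$ of $\slk$ or in the interaction between bounding chains and trivial Reeb chord strips at the Reeb chord endpoints (where the alteration of $f$ in Section~\ref{s:adddata} could conceivably introduce a further framing-sensitive intersection). Once this factor is pinned down the theorem follows immediately by independence of the total factor from the partition of $n$ into components, so that the only effect of framing on the wave function is multiplication of each Fourier coefficient by the scalar $e^{g_s\sum_j n_j^2 r_j}$.
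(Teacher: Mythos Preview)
Your approach is essentially the same as the paper's: both arguments isolate the framing dependence in the correction term of the bounding chain, compute that $\sigma_u$ changes by $n_{u,j}r_j\,W^{\mathrm u}(\kappa_1^j)$, and read off the change in linking numbers via $\partial u\cdot W^{\mathrm u}(\kappa_1^j)=n_{u,j}$. The paper's proof is four terse sentences that jump directly from ``the linking number in $L_{K_j}$ in this class changes by $n_j^2 r_j$'' to the claimed factor, whereas you spell out the simple-graph aggregation and the quadratic identity $\sum_i a_i^2+2\sum_{i<j}a_ia_j=(\sum_i a_i)^2$ to show that the multiplier depends only on the total class~$n$; your honest flag about the $\tfrac12$ bookkeeping is a point the paper does not address explicitly either.
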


\begin{proof}
Note first that the actual holomorphic curves are independent of the framing. In the perturbation scheme used, the change comes from correcting the boundaries at infinity $\partial_{\infty}\sigma_{u}$ to lie in the correct class. Following the perturbation scheme, this means that for a curve that goes $n_{j}$ times around the generator of $H_{1}(L_{K_{j}})$ we must correct the bounding chain by adding $n_{j}r_{j} W^{\rm u}(\kappa_{1}^{j})$. This means that the linking number in $L_{K_{j}}$ in this class changes by $n_{j}^{2}r_{j}$ which explains the factor $e^{(\sum_{j=1}^{k} n_{j}^{2}r_{j})g_{s}}$.
\end{proof}

\section{Recursive calculation of the open Gromov--Witten potential of the Lagrangian conormal}

In this section we show how to use Theorem \ref{t:mainmodulispace} to determine $\Psi_{K}$ by induction on the Euler characteristic. The inductive step is closely related to the tangent space of the augmentation variety expressed in terms of linearized contact homology. Although this induction is not useful in practice for computing the wave function, individual steps are interesting in themselves. For example, the first step in the recursion gives the annulus amplitude along the augmentation curve, which is the central ingredient in Eynard--Orantin topological recursion, see \cite{remodelB, eynardorantin}.    

\subsection{Regularity properties of the disk potential of the conormal}
Let $K$ be a link, $\Lambda_{K}\subset ST^{\ast}S^{3}$ its conormal Legendrian, and $L_{K}$ its conormal Lagrangian. We will think of $L_K$ either as a Lagrangian submanifold in $T^{\ast} S^{3}$ (when $Q=1$) or in the resolved conifold $X$ (when $Q\ne 1$).

Write $W_K=W_K(e^{x_{1}},\dots,e^{x_{k}},Q)$ for the disk potential of $L_K\subset X$. Our first result states that $W_K$ is an analytic function. (It is a priori not clear that the generating function for holomorphic disks has any convergence properties.)

\begin{lemma}\label{l:analytic}
	The potential $W_K$ is analytic as a function of $x=(x_{1},\dots,x_{k},Q)$.
\end{lemma}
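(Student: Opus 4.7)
The plan is to establish absolute convergence of the formal series
\[
W_K(e^{x_1}, \dots, e^{x_k}, Q) = \sum_{n, r} N_{n, r}\, e^{n \cdot x}\, Q^r
\]
on a suitable polydomain, where $N_{n, r}$ denotes the algebraic count of rigid generalized holomorphic disks in the relative homology class $n \cdot x + r t \in H_2(X, L_K)$. Once absolute convergence on a polyannulus is in hand, analyticity follows by Weierstrass.

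First, I would control the support of the series using symplectic area. The symplectic area of a class $n \cdot x + r t$ is a linear functional $\omega(n, r) = \sum_j a_j n_j + r t_0$. After the Ooguri--Vafa shift used in Section~1 to produce $L_K \subset X$, each $a_j$ is strictly positive (for the orientation of $x_j$ for which $L_K$ can bound holomorphic curves) and $t_0$ equals the K\"ahler parameter of $X$. Gromov compactness then implies that for each fixed area bound $A > 0$ only finitely many classes $(n, r)$ admit a nonempty moduli space of holomorphic disks, and for each such class the signed count $N_{n, r}$ is a finite integer. The contributions from generalized disks (graphs with additional edges decorated by linking data) are bounded in terms of the underlying holomorphic disk counts together with $\slk$- and $\lk$-factors, and so do not spoil this finiteness.

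Second, I would upgrade finiteness to a growth bound of the form $|N_{n, r}| \leq C^{|n| + r}$ for some constant $C > 0$. One route is a direct estimate: the formal dimension of the moduli space of disks of class $\beta$ grows at most linearly in $\omega(\beta)$, and the number of topological types of disks with this area bound is controlled by a classical counting argument. A second, cleaner route is to invoke the AENV result that $p_j = \partial W_K/\partial x_j$ parameterizes a branch of the algebraic augmentation variety $V_K$, so the partial derivatives of $W_K$ must agree with algebraic (hence analytic) functions on the relevant domain. Analyticity of $\partial W_K/\partial x_j$ then forces the required exponential bound on $N_{n, r}$ near any generic point.

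Combining the area functional with the coefficient bound, the series converges absolutely on the polyannulus where $|e^{x_j}|$ and $|Q|$ are sufficiently small to dominate the exponential growth of $N_{n, r}$, and uniformly on compact subsets. This gives analyticity of $W_K$. The main obstacle is the second step: establishing the exponential bound on $N_{n, r}$ in a self-contained way, independent of large $N$ duality or of the full rigorous setup of the SFT potential. I expect this to be the technical heart of the argument, following from a careful index/energy estimate for the generalized curves defined in Section~\ref{sec:gencurves}.
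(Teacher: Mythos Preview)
Your ``second, cleaner route'' is exactly the paper's proof, and it is complete on its own: since $p_j = \partial W_K/\partial x_j$ parameterizes a branch of the algebraic augmentation variety $V_K$, each $e^{p_j}$ is an algebraic (hence analytic) function of $(e^{x_1},\dots,e^{x_k},Q)$, and integrating these analytic functions gives analyticity of $W_K$. That is the entire argument in the paper --- three sentences, no estimates on moduli spaces.

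The rest of your proposal is unnecessary scaffolding. The Gromov compactness and area-functional discussion establishes only local finiteness, not the growth bound you need, and your ``first route'' (a direct exponential bound $|N_{n,r}|\le C^{|n|+r}$ from index/energy considerations) would be genuinely hard to carry out and is not attempted in the paper. You also mis-identify the ``technical heart'': once the derivatives are known to be algebraic, the exponential coefficient bound is automatic, not an obstacle. Strip everything except the second route and you have the paper's proof.
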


\begin{proof}
	The relation between the disk potential and the augmentation variety implies that $p_{j}=\frac{\partial W_K}{\partial x_{j}}$ gives a local branch of the augmentation variety. On the other hand the augmentation variety is an algebraic variety and determines $e^{p_{j}}$ as an algebraic function of $(e^{x_{1}},\dots,e^{x_{k}},Q)$. The lemma follows. 
\end{proof}

\subsection{Properties of linearized contact homology for $2$-component links}\label{sec:linlink}
If $K=K_{1}\cup\dots\cup K_{k}$ is a link, $\Lambda_{K}$ its conormal Legendrian, and 
\[ 
\epsilon\colon CE(\Lambda_{K})\to \C[e^{\pm x_{j}},Q]_{j=1,\dots,k}
\]
an augmentation, then we define the linearized contact homology complex at $\epsilon$:
\[ 
CE^{\rm lin}_{\epsilon}(K)=\ker(\epsilon)/\ker(\epsilon)^{2},
\]
with differential $d^{\rm lin}_{\epsilon}$ induced by the differential $d$ on $CE(\Lambda_{K})$. If the augmentation $\epsilon$ takes all mixed Reeb chords (i.e., Reeb chords with endpoints on distinct components of $\Lambda_{K}$) to $0$, then the linearized complex $CE^{\rm lin}_{\epsilon}(K)$ decomposes as
\[ 
CE^{\rm lin}_{\epsilon}(K)=\bigoplus_{i,j} CE^{\rm lin}_{\epsilon}(K_{i},K_{j}),
\]
where the summand $CE^{\rm lin}_{\epsilon}(K_{i},K_{j})$ is generated by Reeb chords starting on $\Lambda_{i}$ and ending on $\Lambda_{j}$.

For the remainder of this subsection, we specialize to the 2-component case. Let $K_1$ and $K_2$ be disjoint knots and $L_{K_{1}}$ and $L_{K_{2}}$ their conormal Lagrangian submanifolds in $T^{\ast} S^{3}$. Let $\epsilon_{0}$ be the augmentation induced by the exact Lagrangian filling $L_{K_{1}}\cup L_{K_{2}}$. Then $\epsilon_{0}$ acts trivially on mixed chords. Consider $CE^{\rm lin}_{\epsilon_{0}}(K_{1},K_{2})$. Note that on coefficients, $\epsilon_{0}(e^{p_{j}})=1$ and $\epsilon_{0}(e^{x_{j}})=e^{x_{j}}$; also, $Q=1$ since we work in $T^{\ast}S^{3}$.

Write $P(K_1,K_2)$ for the space of paths starting on $K_1$ and ending on $K_2$ and let $C(K_1,K_2)$ denote the singular chain complex $C_{\ast}(P(K_1,K_2))$. Note that $P(K_1,K_2)$ fibers over the torus $K_1\times K_2$ with fiber at $(q_1,q_2)$ equal to $\Omega(q_1,q_2)$, the space of paths connecting $q_1$ to $q_2$. Using this fibration, we consider the singular chain complex $C_{\ast}(P(K_1,K_2))$ with local coefficients in $\pi_1(K_1)\times\pi_{1}(K_2)$. We write the group ring variables as $e^{x_1}$ and $e^{x_2}$ since the generators can be identified with the generators of the first homology of $L_{K_1}$ and $L_{K_{2}}$.

There is a natural chain map 
\[  
\Theta\colon CE^{\rm lin}_{\epsilon_{0}}(K_1,K_2)\to C(K_1,K_2)
\]
which maps a mixed chord $a$ to the singular chain $\Theta(a)$ defined as follows. Let $\mathcal{M}(a;K_1,K_2)$ denote the moduli space of holomorphic disks $u\colon (D,\partial D)\to (T^{\ast}S^{3},L_{K})$, with one positive puncture mapping to $a$ and two Lagrangian intersection punctures mapping to $K_1$ and $K_2$. Then evaluation along the boundary segment between the two Lagrangian intersection punctures gives a path connecting $K_1$ to $K_2$ and we let $\Theta(a)$ be the chain of paths carried by the moduli space, see Figure \ref{fig:pssmap}.

\begin{figure}
\labellist
\small\hair 2pt
\pinlabel $+$ at 23 73
\pinlabel $L_{K_1}$ at 101 60
\pinlabel $L_{K_2}$ at 177 60
\pinlabel $K_1$ at 100 23
\pinlabel $K_2$ at 177 23
\pinlabel ${\color{blue} S^3}$ at 134 2
\pinlabel ${\color{green} a}$ at 139 93
\endlabellist
	\centering
	\includegraphics[width=.6\linewidth]{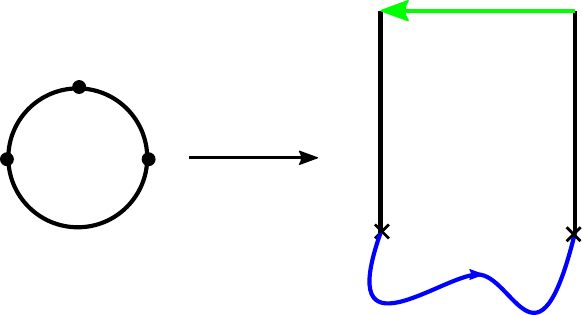}
	\caption{Holomorphic curve for $\Theta(a)$.}
	\label{fig:pssmap}
\end{figure}

\begin{lemma}\label{l:pssmix}
	The map $\Theta$ is a chain map that induces an isomorphism on homology.
\end{lemma}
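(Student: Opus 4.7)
The plan is to establish Lemma \ref{l:pssmix} in two stages: first verify that $\Theta$ is a chain map, then argue that it is a quasi-isomorphism by identifying both sides with a common Morse-theoretic model on the path space $P(K_1,K_2)$.

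For the chain map property, I would analyze the codimension-one boundary of the moduli space $\mathcal{M}(a;K_1,K_2)$ that defines $\Theta(a)$. In a generic one-parameter family of such disks, two kinds of degenerations occur: (i) SFT splitting at the positive puncture, in which a disk in the symplectization $\R\times ST^{\ast}S^3$ with positive puncture $a$ breaks off, connected through negative punctures to one or more disks with boundary on $L_{K_1}\cup L_{K_2}$; and (ii) degeneration of the boundary arc joining the two Lagrangian intersection punctures, which is precisely the standard singular boundary operator on $C(K_1,K_2)$. Because $\epsilon_0$ vanishes on mixed chords, linearizing collapses the trees in (i) to a single negative mixed puncture filled by a mixed disk, giving $\Theta\circ d^{\rm lin}_{\epsilon_0}$, while (ii) contributes the path-space boundary applied to $\Theta(a)$. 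Combining these yields the chain map identity.

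For the quasi-isomorphism, the strategy is to degenerate the geometry so that holomorphic disks limit onto gradient-flow-theoretic objects. Concretely, one can either stretch the neck around $ST^{\ast}S^3$ or rescale the Riemannian metric on $S^3$ so that the cotangent directions become very narrow. Under this adiabatic limit, standard techniques (in the spirit of Floer's original theorem, Abbondandolo--Schwarz, and their SFT analogues) identify holomorphic disks of the type defining $\Theta$ with broken geodesic trees: the positive asymptotic becomes a geodesic chord from $K_1$ to $K_2$ perpendicular to both at its endpoints, while the boundary arc between the Lagrangian intersection punctures converges to a piecewise geodesic path from $K_1$ to $K_2$. Critical points of the energy functional on $P(K_1,K_2)$ with these boundary conditions are exactly the perpendicular geodesic chords, and by Milnor's broken-geodesic model the associated Morse complex computes $H_{\ast}(P(K_1,K_2))$. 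The adiabatic limit thus turns $\Theta$ into a chain-level identification between the contact-homological generators and the Morse generators of the path space, from which the quasi-isomorphism for the original complex structure follows by a homotopy argument.

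The main obstacle is the second step, namely controlling the degeneration of holomorphic curves to broken gradient-flow configurations while ruling out bubbling in the limit. This requires SFT compactness and gluing in the presence of two moving Lagrangian intersection punctures on different components, combined with transversality for the broken geodesic model. A parallel but technically simpler alternative, which I would pursue if the adiabatic limit proves too delicate, is to place an action filtration by Reeb chord length (respectively geodesic energy) on both sides and show that $\Theta$ induces an isomorphism on each graded piece by a direct index and transversality computation, closely modeled on the corresponding statement for the single-component conormal that underlies the string-topology interpretation of linearized knot contact homology.
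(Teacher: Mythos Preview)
Your chain map argument matches the paper's: the boundary of the one-dimensional moduli space $\mathcal{M}(a;K_1,K_2)$ decomposes into SFT two-level breakings (giving $\Theta\circ d^{\rm lin}_{\epsilon_0}$) and the singular boundary of the chain carried by the moduli space (giving $\partial\circ\Theta$), and this is exactly what the paper says.

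For the quasi-isomorphism, your primary route via an adiabatic limit is correct in spirit but more elaborate than what the paper does. The paper avoids any degeneration of the almost complex structure or metric: it simply observes that for each binormal geodesic from $K_1$ to $K_2$ there is already a canonical holomorphic strip, namely the $\R$-invariant trivial strip over the corresponding Reeb chord, and this strip realizes the minimal action in its class. Filtering both complexes by action (Reeb chord length on the SFT side, geodesic energy on the path-space side in a Morse model of $P(K_1,K_2)$), one then checks that $\Theta$ induces the identity on the associated graded, hence an isomorphism on the $E^1$-page of the action spectral sequence, and the quasi-isomorphism follows. This is precisely the ``alternative'' you sketch at the end, and it is both simpler and more robust than the adiabatic limit: no compactness or gluing for degenerating families is needed, only the existence and uniqueness (by an action/index argument) of the trivial strip as the minimal-action contribution. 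Your instinct to fall back on the filtration argument is the right one; in fact it is the main argument, not a backup.
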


\begin{proof}
	The proof is similar to the proof of \cite[Theorem 1.1]{CELN}. By SFT compactness, the terms of the chain map equation,
	\[ 
	\partial \circ \Theta -\Theta\circ d^{\rm lin}_{\epsilon_{0}}=0
	\]
	can be identified with the endpoints of the 1-dimensional moduli space of curves with one positive puncture at a mixed Reeb chord and two Lagrangian intersection punctures at $K_{1}$ and $K_{2}$, see Figure \ref{fig:psstwolevel}.
	
	For each binormal geodesic connecting $K_1$ to $K_2$, we have a corresponding Reeb chord from $\Lambda_{K_{1}}$ to $\Lambda_{K_{2}}$ and the $\R$-invariant trivial strip over this chord is a minimal action holomorphic strip. Using a Morse theoretic model of the space $P(K_{1},K_{2})$ and the action filtration, we find that the above chain map is an isomorphism on the first page of the corresponding spectral sequence and hence a quasi-isomorphism. 
\end{proof}

\begin{figure}
\labellist
\small\hair 2pt
\pinlabel $\R \times \Lambda_{K_1}$ at -15 128
\pinlabel $\R \times \Lambda_{K_2}$ at 81 128
\pinlabel $L_{K_1}$ at -8 58
\pinlabel $L_{K_2}$ at 72 58
\pinlabel ${\color{blue} S^3}$ at 27 1
\pinlabel ${\color{green} a}$ at 32 160
\pinlabel ${\color{green} b}$ at 32 94
\endlabellist
	\centering
	\includegraphics[width=.2\linewidth]{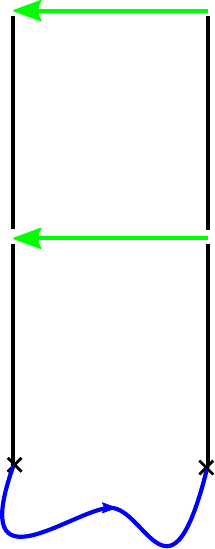}
	\caption{The singular boundary $\partial \Theta(a)$ of $\Theta(a)$ corresponds to two-level curves.}
	\label{fig:psstwolevel}
\end{figure}

We next compute the homology of $C(K_{1},K_{2})$.
\begin{lma}\label{l:acyclic}
If $x_{1}\ne 0$ or $x_{2}\ne 0$ then the homology of $C(K_{1},K_{2})$ vanishes.
\end{lma}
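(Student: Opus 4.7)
The plan is to exploit the fact that $P(K_{1},K_{2})$ fibers over $T^{2}=K_{1}\times K_{2}$ with simply connected fiber, which will reduce the computation to a calculation of homology of $T^{2}$ with a rank-1 local system. Explicitly, the evaluation-at-endpoints map $\pi\colon P(K_{1},K_{2})\to K_{1}\times K_{2}$ is a fibration whose fiber over $(q_{1},q_{2})$ is the space of paths from $q_{1}$ to $q_{2}$ in $S^{3}$, which is homotopy equivalent to $\Omega S^{3}$. Since $S^{3}$ is simply connected, $\pi_{1}(\Omega S^{3})=\pi_{2}(S^{3})=0$, and the homotopy long exact sequence of the fibration gives $\pi_{1}(P(K_{1},K_{2}))\cong\pi_{1}(T^{2})=\Z^{2}$, generated by loops in $K_{1}$ and $K_{2}$. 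Hence the rank-1 local system $\mathcal{L}$ defining the twisted chain complex $C(K_{1},K_{2})$ is pulled back via $\pi$ from a local system $\mathcal{L}_{\alpha,\beta}$ on $T^{2}$ with monodromies $\alpha=e^{x_{1}}$ and $\beta=e^{x_{2}}$ around the two generators.

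Next I would feed this into the Serre spectral sequence of the fibration with the local coefficients $\mathcal{L}$:
\[
E_{2}^{p,q}=H_{p}\bigl(T^{2};\,H_{q}(\Omega S^{3})\otimes\mathcal{L}_{\alpha,\beta}\bigr)\;\Longrightarrow\;H_{p+q}\bigl(P(K_{1},K_{2});\mathcal{L}\bigr).
\]
Before reading off $E_{2}$, I would check that the monodromy of $\pi$ acts trivially on $H_{\ast}(\Omega S^{3})$: pulling the fibration back along the universal cover $\R^{2}\to T^{2}$ yields a fibration over a contractible base, which is fiber-homotopically trivial, so the deck group $\Z^{2}$ acts trivially on the fiber homology. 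Consequently the $E_{2}$ page splits as $E_{2}^{p,q}\cong H_{p}(T^{2};\mathcal{L}_{\alpha,\beta})\otimes H_{q}(\Omega S^{3})$.

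The core computation then becomes showing that $H_{\ast}(T^{2};\mathcal{L}_{\alpha,\beta})=0$ whenever $\alpha\ne 1$ or $\beta\ne 1$. Using the product CW structure on $T^{2}=S^{1}\times S^{1}$ and Künneth for cellular homology with local coefficients,
\[
H_{\ast}(T^{2};\mathcal{L}_{\alpha,\beta})\cong H_{\ast}(S^{1};\mathcal{L}_{\alpha})\otimes H_{\ast}(S^{1};\mathcal{L}_{\beta}).
\]
Each factor is computed from the two-term complex $\C\xrightarrow{\alpha-1}\C$ (respectively with $\beta$), which is acyclic exactly when $\alpha\ne 1$ (respectively $\beta\ne 1$). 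Under the hypothesis $x_{1}\ne 0$ or $x_{2}\ne 0$, interpreted as $\alpha\ne 1$ or $\beta\ne 1$, one tensor factor vanishes, so the product is zero. Therefore $E_{2}=0$, the spectral sequence collapses trivially, and $H_{\ast}(P(K_{1},K_{2});\mathcal{L})=0$ as claimed.

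The main potentially subtle step is the triviality of the monodromy action on $H_{\ast}(\Omega S^{3})$, and relatedly the precise interpretation of "$x_{j}\ne 0$" (I read this as $e^{x_{j}}\ne 1$, or equivalently as a statement about vanishing after inverting $e^{x_{j}}-1$ in $\C[e^{\pm x_{1}},e^{\pm x_{2}}]$). Both points are standard once the fibration structure is unpacked; the remainder of the argument is a routine calculation of homology with local coefficients on $S^{1}$.
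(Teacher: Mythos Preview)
Your proof is correct and follows essentially the same route as the paper: both use the Serre spectral sequence of the fibration $P(K_{1},K_{2})\to K_{1}\times K_{2}$ with fiber $\Omega S^{3}$, and both reduce the problem to showing $H_{\ast}(T^{2};\mathcal{L}_{\alpha,\beta})=0$. The only cosmetic difference is that the paper writes out the four-term cellular chain complex of $T^{2}$ with twisted coefficients explicitly (generators $M,s_{1},s_{2},m$ with the evident differentials), whereas you factor via K\"unneth into two copies of the two-term complex on $S^{1}$; your presentation is slightly more streamlined in that once $E_{2}=0$ the collapse is automatic, making the paper's separate remark about higher differentials unnecessary.
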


\begin{proof}
Using the fibration $P(K_1,K_2)\to K_1\times K_2$ we compute the homology $H(C(K_{1},K_{2}))$ via the Leray-Serre spectral sequence  with second page
\[ 
E^{2}_{p,q}= H_{p+q}(K_1\times K_{2};H_{q}(\Omega(k_{1},k_{2}))).
\]
The homology of the fiber is the homology of the based loop space of $S^{3}$, which has rank $1$ in even degrees $0,2,4,\dots$ and rank $0$ otherwise. Thus there can be no higher differentials and we find that the homology is computed on this page. The chain complex is then generated by $M$ of degree 2, $s_1,s_2$ of degree 1, and $m$ of degree $0$, with
\begin{equation}
\begin{aligned}
d M &= (1-e^{x_1})s_1 + (1-e^{x_2})s_2\\
ds_1 &= (1-e^{x_2})m\\
ds_2 &= -(1-e^{x_1})m\\
d m &= 0.
\end{aligned}
\label{eq:morsedistinct}
\end{equation}
This complex is acyclic if $x_1\ne 0$ or $x_2\ne 0$.
\end{proof}

Consider now the transition to the resolved conifold $X$ and the augmentation variety $p_{j}=\frac{\pa W_{K}}{\pa x_{j}}$, $j=1,2$. Let $\epsilon$ denote the augmentation induced by the non-exact Lagrangian filling $L_{K}\subset X$. 

\begin{lemma}\label{l:mixedlinvanish}
	The mixed linearized contact homology $H_*(CE^{\rm lin}_{\epsilon}(K_{1},K_{2}))$ vanishes for $\epsilon$ in a Zariski open subset of the branch of the augmentation variety corresponding to the parameterization $p_{j}=\frac{\partial W_{K}}{\partial x_{j}}$, $j=1,2$.
\end{lemma}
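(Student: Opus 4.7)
The plan is to combine the exact-case computation from Lemmas \ref{l:pssmix} and \ref{l:acyclic} with an algebraic deformation (semi-continuity) argument on the augmentation variety. The branch parameterized by $p_{j}=\partial W_{K}/\partial x_{j}$ specializes, at $Q=1$, to the branch $e^{p_{j}}=1$ inside the augmentation variety of the link in $T^{\ast}S^{3}$, since $L_{K}\subset T^{\ast}S^{3}$ is then exact and $W_{K}\equiv 0$ at $Q=1$ by Stokes' theorem. In particular, the augmentation $\epsilon_{0}$ of Section \ref{sec:linlink} induced by the exact filling $L_{K_{1}}\cup L_{K_{2}}$ lies on (the closure of) this branch.

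First I would fix a $\Z$-grading (or graded piece) and view the mixed linearized differential $d^{\rm lin}_{\epsilon}$ as a finite matrix whose entries are Laurent polynomials in $e^{x_{j}}$, $Q$, and the augmentation values $\epsilon(a_{i})$ of the degree $0$ generators. Since these augmentation values are themselves regular functions on the branch of $V_{K}$ under discussion, each entry of $d^{\rm lin}_{\epsilon}$ is a regular function on that branch. Acyclicity in a given degree is equivalent to the ranks of $d^{\rm lin}_{\epsilon}$ on the relevant two graded pieces being maximal, and rank-$\ge r$ loci are Zariski open; taking the intersection over the finitely many nonzero degrees shows that the set of $\epsilon$ on the branch at which $H_{\ast}(CE^{\rm lin}_{\epsilon}(K_{1},K_{2}))=0$ is Zariski open. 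It therefore suffices to exhibit a single point of the branch at which acyclicity holds.

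For this I would specialize to $Q=1$ and $\epsilon=\epsilon_{0}$, where the Lagrangian filling is exact and all mixed Reeb chords are augmented to zero. By Lemma \ref{l:pssmix} the map $\Theta$ is a quasi-isomorphism from $CE^{\rm lin}_{\epsilon_{0}}(K_{1},K_{2})$ to the path-space chain complex $C(K_{1},K_{2})$, and by Lemma \ref{l:acyclic} the latter is acyclic whenever $x_{1}\ne 0$ or $x_{2}\ne 0$. The condition $(x_{1},x_{2})\ne (0,0)$ is itself Zariski open in the coefficient torus, so the acyclic locus of the branch is nonempty. Combining this with the openness argument of the previous paragraph, acyclicity extends to a Zariski open subset of the branch parameterized by $p_{j}=\partial W_{K}/\partial x_{j}$.

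The main obstacle I anticipate is the limit $Q\to 1$: one must argue that the relevant ``non-exact'' branch of $V_{K}$ truly has $\epsilon_{0}$ as a smooth point, so that regular functions and ranks behave semi-continuously across the specialization, and that the decomposition $CE^{\rm lin}=\bigoplus CE^{\rm lin}(K_{i},K_{j})$ (which in the $X$ setting requires $\epsilon$ to vanish on mixed chords) can be replaced, for augmentations on the branch where mixed chords may be nonzero, by a filtration whose associated graded agrees with the mixed subcomplex used here. Granting this, the rest of the argument is purely a Zariski-openness statement on an algebraic variety and is routine.
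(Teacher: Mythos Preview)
Your proposal is correct and follows essentially the same route as the paper: use Lemmas \ref{l:pssmix} and \ref{l:acyclic} to get acyclicity at $\epsilon_{0}$, then note that acyclicity is a (Zariski) open condition because it is equivalent to certain rank conditions on the matrix of $d^{\rm lin}_{\epsilon}$, whose entries are regular functions on the branch. The paper's proof compresses this into a single sentence (``surjectivity of the differentials is stable under small perturbations''), whereas you spell out the semi-continuity argument more carefully; conceptually there is no difference.

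One remark on your stated obstacle: your worry about the decomposition $CE^{\rm lin}_{\epsilon}=\bigoplus CE^{\rm lin}_{\epsilon}(K_{i},K_{j})$ is unnecessary on this particular branch. The augmentation $\epsilon$ in the lemma is the one induced by the conormal filling $L_{K}=L_{K_{1}}\cup L_{K_{2}}\subset X$, and since $L_{K_{1}}\cap L_{K_{2}}=\varnothing$ there are no rigid disks with a single positive puncture at a mixed chord and boundary on $L_{K}$; hence $\epsilon$ automatically vanishes on mixed chords for every point of this branch, not just at $\epsilon_{0}$. So no filtration argument is needed.
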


\begin{proof}
%
	The condition of the differentials in the complex $CE^{\rm lin}_{\epsilon}(K_1,K_2)$ being surjective is stable under small perturbations and hence we find that the homology is as claimed in an open subset. The lemma follows.
\end{proof}

\subsection{Properties of linearized contact homology for knots}\label{sec:linknot}
We next consider the counterpart of Lemma \ref{l:mixedlinvanish} for a single knot. The discussion from Section \ref{sec:linlink} needs only small modifications.

The space $P=P(K,K)$ of paths starting and ending on $K$ is the analogue of $P(K_{1},K_{2})$.  For $K_{1}\ne K_{2}$ we had local coefficients in $\pi_{1}(K_{1})\times\pi_{1}(K_{2})$. Here $K_{1}=K_{2}$; the coefficients still sit at the endpoints of the paths and now give a total coefficient in $\pi_{1}(K)$.
Write $C_{\ast}(P)$ for singular chains with these coefficients. Repeating the argument in Lemma \ref{l:acyclic}, we find that the homology $H(C_{\ast}(P))$ equals $0$ for $x\ne 0$. 

Write $P_0\subset P$ for the subspace of constant paths from $K$ to $K$. 
The exact sequence for relative homology (with coefficients in $\pi_{1}(K)$) then gives 
\[
\cdots \longrightarrow H_*(P) \longrightarrow H_*(P,P_0) \longrightarrow H_*(P_0) \longrightarrow \cdots.
\]
Since $H_{\ast}(P)=0$, the quotient complex is isomorphic via the connecting homomorphism to the subcomplex with degree shifted by $1$.

Let $\epsilon_{0}\colon CE(\Lambda_{K})\to\C[e^{\pm x}]$ be the augmentation induced by the exact filling $L_{K}\subset T^{\ast}S^{3}$, $\epsilon_{0}(e^{p})=1$. Denote the corresponding linearized chain complex $CE^{\rm lin}_{\epsilon_{0}}(K)$. As in the two component case, consider the map 
\[ 
\Theta\colon CE^{\rm lin}_{\epsilon_{0}}(K)\to C_{\ast}(P,P_0),
\]
where $C_{\ast}(P,P_0)$ denotes the quotient complex of singular chains $C_{\ast}(P)/C_{\ast}(P_{0})$ and
where, in direct analogy with the two component case, $\Theta(a)$ is the chain of paths carried by the moduli space $\mathcal{M}(a;K,K)$ of holomorphic disks with positive puncture at $a$, and two Lagrangian intersection punctures of $K$. 
\begin{lma}\label{l:qisoknot}
The map $\Theta$ is a chain map and a quasi-isomorphism, inducing an isomorphism
\[ 
H_{\ast}(CE^{\rm lin}_{\epsilon_{0}}(K))\to H_{\ast}(P,P_0)\cong H_{\ast+1}(P_{0}).
\]
\end{lma}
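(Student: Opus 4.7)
The plan is to mirror the argument of Lemma~\ref{l:pssmix}, with the main modifications needed to handle the presence of constant paths $P_0 \subset P$. First I would check that $\Theta$ is a chain map into $C_\ast(P,P_0)$. By SFT compactness applied to the one-dimensional moduli space of holomorphic disks with one positive puncture at a Reeb chord $a$ and two Lagrangian intersection punctures on $K$, the boundary of the moduli space decomposes into (i) Reeb-chord splittings at the positive puncture, which contribute $\Theta(d^{\rm lin}_{\epsilon_0} a)$, and (ii) degenerations of the boundary arc connecting the two intersection punctures, which give the singular boundary of $\Theta(a)$ in $C_\ast(P)$. The new feature for a single knot, compared with Lemma~\ref{l:pssmix}, is that the two boundary punctures can collide at a single point of $K$, producing a constant path lying in $P_0$. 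These collision configurations are exactly what is killed by passing to the quotient $C_\ast(P,P_0)$, so the identity $\partial \circ \Theta = \Theta \circ d^{\rm lin}_{\epsilon_0}$ holds modulo $C_\ast(P_0)$.

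Next I would establish that $\Theta$ is a quasi-isomorphism by an action-filtration argument parallel to that of Lemma~\ref{l:pssmix}. Filter $CE^{\rm lin}_{\epsilon_0}(K)$ by Reeb-chord length and $C_\ast(P,P_0)$ by path length. The non-constant critical submanifolds of the length functional on $P$ are precisely the binormal geodesics from $K$ to itself, and these are in bijection, via the geodesic flow, with the Reeb chords of $\Lambda_K$. The $\R$-invariant trivial strip over each such chord realizes the corresponding Morse--Bott critical set in $P$ as the minimal-action contribution to $\Theta$, and taking the quotient by $P_0$ on the target eliminates exactly the constant-path critical submanifold, which has no source counterpart. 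Hence $\Theta$ induces an isomorphism on the $E_1$ page of the spectral sequence, and therefore a quasi-isomorphism of the total complexes.

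For the final identification, I would observe that the Leray--Serre argument of Lemma~\ref{l:acyclic} applies verbatim to the fibration $P \to K \times K$, whose fiber is the based loop space $\Omega S^3$: with local coefficients governed by $e^x$ on the $\pi_1(K)$ at each endpoint, the $E_2$-page computation gives $H_\ast(P) = 0$ for $x \neq 0$. The long exact sequence of the pair $(P,P_0)$, with these coefficients, then collapses to the degree-shift isomorphism $H_\ast(P,P_0) \cong H_{\ast+1}(P_0)$ via the connecting homomorphism. The main obstacle I expect is the transversality analysis required to correctly treat the coalescence of the two boundary punctures at $K$: one must argue that the constant-path limits form codimension-one faces of the moduli space which cancel against other degenerations precisely after taking the quotient by $C_\ast(P_0)$. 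In the two-component setting this issue does not arise because the two punctures land on disjoint components, which is what makes the single-knot case genuinely more delicate.
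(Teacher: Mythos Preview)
Your proposal is correct and follows essentially the same approach as the paper: establish the chain map equation by noting that the extra codimension-one boundary stratum (where the arc in $S^3$ collapses to a constant path) lands in $C_\ast(P_0)$ and hence vanishes in the quotient, then invoke the action-filtration/trivial-strip argument for the quasi-isomorphism, with the degree-shift identification coming from the long exact sequence of the pair and the acyclicity of $C_\ast(P)$. One small imprecision: your item (ii) does not by itself give $\partial\Theta(a)$---all boundary strata of the compactified moduli space together constitute $\partial\Theta(a)$, with the Reeb splittings (i) accounting for $\Theta(d^{\rm lin}_{\epsilon_0}a)$ and the constant-path collisions contributing the residual term that the quotient by $C_\ast(P_0)$ kills.
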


\begin{proof}
To see that the chain map equation holds we note that the codimension one boundary of the chain carried by the moduli space $\mathcal{M}(a;K,K)$ has two parts. The first part, exactly as in the 2-component case, consists of two-level curves with a curve of dimension $1$ in the symplectization. The second part is the locus where the component in the boundary of a map $u\in\M(a;K,K)$ that maps to $S^{3}$ shrinks to a constant. The second degeneration thus gives a chain of constant paths, and since we divide out by chains of constant paths the desired chain map equation follows. 

The quasi-isomorphism statement then follows from existence and uniqueness of trivial Reeb chord strips and an action filtration argument exactly as in the 2-component case.  
\end{proof}

As in the 2-component case we will transfer Lemma \ref{l:qisoknot} to the linearized contact homology for other augmentations $\epsilon$ that can be viewed as small perturbations of $\epsilon_{0}$ induced by the exact filling $L_{K}$. To that end we need a chain complex which is stable under small perturbation. We define it as follows.

Add the Morse complex of $K$, i.e., introduce two additional generators $\xi_{0}$ of degree 0 and $\xi_{1}$ of degree 1. We define the differential $d^{\rm tot}$ on 
\[ 
C^{\rm tot}_{\epsilon_{0}}(K)=C^{\rm lin}_{\epsilon_{0}}(K)\oplus C_{\ast}(K)
\]
as follows:  $d^{\rm tot}\xi_{j}=0$, $j=0,1$, and for Reeb chords $c$, $d^{\rm tot}c=d^{\rm lin}_{\epsilon_{0}} c+ d'_{\epsilon_{0}}c$ where $d^{\rm lin}_{\epsilon_{0}}$ is the differential on $CE^{\rm lin}_{\epsilon_{0}}(K)$ and where 
\[ 
d'_{\epsilon_{0}}\colon CE^{\rm lin}_{\epsilon_{0}}(K)\to C_{\ast}(K)
\] 
is the map that counts holomorphic disks with boundary in $L_K$ as follows. The coefficient of $\xi_{0}$ is the count of curves that pass through any point in $K\subset L_K$ and the coefficient of $\xi_{1}$ the count of curves passes through a specific point in $K$. 
This allows us to define a new chain map
\[ 
\Theta^{\rm tot}\colon C^{\rm tot}_{\epsilon_{0}}(K)\to C_{\ast}(P),
\]
which is defined as before on Reeb chords and takes chains on $K$ to the corresponding chains of constant paths.

\begin{lma}\label{l:qisoknot2}
	The map $\Theta^{\rm tot}$ is a chain map and a quasi-isomorphism. It follows in particular that if $x\ne 0$ and if $b$ is a generator of $H_{1}(CE^{\rm lin}_{\epsilon_{0}}(K))$ then the count of holomorphic disks with positive puncture at $b$ that pass through $K$ is nonzero.
\end{lma}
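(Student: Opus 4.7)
The plan is in three stages: verify the chain-map property by analyzing the boundary of the same moduli spaces used in Lemma~\ref{l:qisoknot}, upgrade to a quasi-isomorphism by a five-lemma argument on short exact sequences, and then deduce the disk-count statement from acyclicity of $C_{\ast}(P)$. For the first stage I would revisit the SFT-compactness argument from the proof of Lemma~\ref{l:qisoknot}. The 1-dimensional moduli space $\M(a;K,K)$ of disks with one positive puncture at a Reeb chord $a$ and two boundary punctures on $K$ has three codimension-one boundary strata: usual two-level SFT breaking at an intermediate Reeb chord, producing $\Theta(d^{\rm lin}_{\epsilon_{0}}a)$; collision of the two boundary punctures, where the $S^{3}$-projection of the disk component shrinks to a constant and the associated path degenerates to a chain of constant paths; and the standard singular boundary of the chain carried by the moduli space. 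In Lemma~\ref{l:qisoknot} the second stratum was killed by passing to the relative complex $C_{\ast}(P,P_{0})$; here it is recorded at the algebraic level as $d'_{\epsilon_{0}}a\in C_{\ast}(K)$, whose image under $\Theta^{\rm tot}$ is exactly the limiting chain of constant paths in $P_{0}\subset P$. On the Morse generators $\xi_{j}$ one has $d^{\rm tot}\xi_{j}=0$ by definition, while $\Theta^{\rm tot}(\xi_{j})$ is a cycle in $C_{\ast}(P_{0})$ because, in the trivialization of the local coefficient system along $P_{0}$ by constant paths, the fundamental cycles of a point and of $K$ carry no boundary. The identity $\partial\circ\Theta^{\rm tot}=\Theta^{\rm tot}\circ d^{\rm tot}$ then holds on all generators.

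For the quasi-isomorphism I would fit $\Theta^{\rm tot}$ into the commutative diagram of short exact sequences
\begin{equation*}
\begin{CD}
0 @>>> C_{\ast}(K) @>>> C^{\rm tot}_{\epsilon_{0}}(K) @>>> CE^{\rm lin}_{\epsilon_{0}}(K) @>>> 0 \\
@. @VVV @V{\Theta^{\rm tot}}VV @V{\Theta}VV @. \\
0 @>>> C_{\ast}(P_{0}) @>>> C_{\ast}(P) @>>> C_{\ast}(P,P_{0}) @>>> 0
\end{CD}
\end{equation*}
in which the leftmost vertical arrow is the inclusion of $C_{\ast}(K)$ as constant-path chains (a quasi-isomorphism since evaluation at an endpoint is a homotopy equivalence $P_{0}\simeq K$) and the rightmost vertical arrow is the quasi-isomorphism of Lemma~\ref{l:qisoknot}. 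Applying the five lemma in the associated long exact sequences in homology then yields that $\Theta^{\rm tot}$ is a quasi-isomorphism.

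For the consequence, I would first argue that $H_{\ast}(C_{\ast}(P))=0$ when $x\ne 0$, by the same method used in Lemma~\ref{l:acyclic}: the Leray--Serre spectral sequence of $P\to K\times K$ has fiber $\Omega S^{3}$ with homology supported in even degrees, so for dimensional reasons no higher differential is nonzero and $E^{\infty}$ agrees with the twisted cellular homology of $S^{1}\times S^{1}$, which is acyclic for $x\ne 0$. Combined with the quasi-isomorphism $\Theta^{\rm tot}$ this gives $H_{\ast}(C^{\rm tot}_{\epsilon_{0}}(K))=0$, and the long exact sequence of the top row of the diagram forces the connecting homomorphism $\delta\colon H_{n}(CE^{\rm lin}_{\epsilon_{0}}(K))\to H_{n-1}(C_{\ast}(K))$ to be an isomorphism for every $n$. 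At the chain level $\delta([b])=[d'_{\epsilon_{0}}b]$; taking $n=1$ and $b$ a generator of $H_{1}$, the class $\delta([b])$ is a generator of $H_{0}(C_{\ast}(K))=\mathbb{C}[e^{\pm x}]\,\xi_{0}$, so its $\xi_{0}$-coefficient is nonzero, and that coefficient is by construction the count of holomorphic disks with positive puncture at $b$ whose boundary passes through a point of $K$.

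The main difficulty I anticipate is the identification of the second moduli-space boundary stratum above as the chain of constant paths in $P_{0}$ carrying the multiplicity recorded by $d'_{\epsilon_{0}}$, together with arranging the local-coefficient conventions so that both $\xi_{0}$ and $\xi_{1}$ are genuine cycles in $C_{\ast}(P_{0})$ and the inclusion $C_{\ast}(K)\hookrightarrow C_{\ast}(P_{0})$ is an honest quasi-isomorphism. Once this geometric input is clean the remainder of the argument is pure homological algebra, as the five lemma and long-exact-sequence mechanism are robust.
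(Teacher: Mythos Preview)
Your argument is correct and closely parallels the paper's. The chain-map verification and the deduction of the nonzero disk count from acyclicity of $C_\ast(P)$ match the paper's reasoning almost verbatim: the paper is simply terser, writing that acyclicity of $C^{\rm tot}_{\epsilon_0}(K)$ forces $d'_{\epsilon_0}b=\xi_0$, which is exactly your connecting-homomorphism computation unpacked.

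The one genuine difference is in the quasi-isomorphism step. The paper says only that it ``follows as before,'' meaning the action-filtration spectral-sequence argument from Lemmas~\ref{l:pssmix} and~\ref{l:qisoknot}: filter both sides by geodesic length (placing $\xi_0,\xi_1$ at length zero, matching the constant geodesics in a Morse model of $P$) and observe that $\Theta^{\rm tot}$ is an isomorphism on the associated graded. Your route instead fits $\Theta^{\rm tot}$ into a map of short exact sequences and applies the five lemma, using the already-established quasi-isomorphism $\Theta$ on the quotient and the identification $P_0\cong K$ on the subcomplex. This is a clean alternative that leverages Lemma~\ref{l:qisoknot} directly rather than re-running its proof; the price is the local-coefficient check you correctly flag, namely that the system restricted to $P_0$ is trivial (the start- and end-point monodromies cancel on constant paths), so that the two-generator complex $C_\ast(K)$ with zero differential really models $C_\ast(P_0)$. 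Both approaches are sound.
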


\begin{proof}
	To see that the chain map equation holds, we note that the map $d'_{\epsilon_{0}}$ followed by the inclusion exactly accounts for the locus in the boundary of $\mathcal{M}(a;K,K)$ where the part of the boundary mapping to $S^{3}$ shrinks to a constant. The chain map equation follows, and the quasi-isomorphism statement then follows as before.   
	
	To see the last statement, note that since $C_{\ast}(P)$ is acyclic, so is $C^{\rm tot}_{\epsilon_{0}}(K)$, which implies that 
	$d'_{\epsilon_{0}}b=\xi_{0}$.
\end{proof}

Lemma \ref{l:qisoknot2} is stable under small perturbations. We use it to prove the following, which is the main result underlying our recursion. 

\begin{lemma}\label{l:generic}
	For augmentations $\epsilon$ in a Zariski open subset of the branch of the augmentation variety corresponding to the conormal filling $L_{K}$ of a knot $K$, we have
	\[ 
	\rk(H_1(CE^{\rm lin}_{\epsilon}(K)))=1\quad \text{and} \quad \rk(H_2(CE^{\rm lin}_{\epsilon}(K)))=1.
	\]
	Furthermore, if we consider disks with positive puncture at a linear combination of Reeb chords that represents the generator of $H_1(CE^{\rm lin}_{\epsilon}(K))$, and if $\xi$ is a parallel of $x$, then the count of these disks that pass through $\xi$ is generically nonzero.   
\end{lemma}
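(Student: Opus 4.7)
The plan is to reduce to the base augmentation $\epsilon_{0}$ induced by the exact Lagrangian filling $L_{K} \subset T^{\ast}S^{3}$ (corresponding to $Q = 1$ on the branch), where Lemma \ref{l:qisoknot2} identifies $C^{\rm tot}_{\epsilon_{0}}(K)$ with the acyclic chain complex $C_{\ast}(P)$, and then propagate the conclusion across the branch by openness of maximal-rank conditions on the differentials.

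First I would compute $H_{\ast}(CE^{\rm lin}_{\epsilon_{0}}(K))$ via the short exact sequence of complexes
\[
0 \to C_{\ast}(K) \to C^{\rm tot}_{\epsilon_{0}}(K) \to CE^{\rm lin}_{\epsilon_{0}}(K) \to 0.
\]
By Lemma \ref{l:qisoknot2} the middle term is acyclic, so the connecting homomorphism furnishes isomorphisms $H_{n}(CE^{\rm lin}_{\epsilon_{0}}(K)) \cong H_{n-1}(K)$. Since $K \cong S^{1}$, this gives rank one in degrees $1$ and $2$ and zero in all other positive degrees, establishing the rank statements at $\epsilon_{0}$. Unpacking the connecting map, if $b$ represents the $H_{1}$-generator then $\partial [b] = [d'_{\epsilon_{0}}(b)]$, which must generate $H_{0}(K)$; hence $d'_{\epsilon_{0}}(b)$ is a nonzero multiple of $\xi_{0}$, giving the nonvanishing disk count at $\epsilon_{0}$.

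Next I would transfer these conclusions to a generic augmentation $\epsilon$ on the branch. The graded generating set of $CE^{\rm lin}_{\epsilon}(K)$ is independent of $\epsilon$, and both $d^{\rm lin}_{\epsilon}$ and $d'_{\epsilon}$ have matrix entries that are regular functions on the branch. Acyclicity of $C^{\rm tot}_{\epsilon}(K)$ in the finite range of degrees needed for computing $H_{1}$ and $H_{2}$ is equivalent to certain finite submatrices attaining maximal rank --- an open condition that holds at $\epsilon_{0}$ and therefore on a Zariski dense open subset of the branch by irreducibility. On this locus the long exact sequence argument repeats verbatim, yielding the rank-one statements for $H_{1}(CE^{\rm lin}_{\epsilon}(K))$ and $H_{2}(CE^{\rm lin}_{\epsilon}(K))$.

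Finally, for the nonvanishing disk count I would take $b = b(\epsilon)$ as an algebraic section of the rank-one kernel bundle of $d^{\rm lin}_{\epsilon}$ in degree one over the open locus. The coefficient of $\xi_{0}$ in $d'_{\epsilon}(b(\epsilon))$ is then a regular function on the open subset which is nonzero at $\epsilon_{0}$, hence nonzero on a possibly smaller but still Zariski open subset; by the geometric definition of $d'_{\epsilon}$ this coefficient is the count of disks with positive puncture at $b(\epsilon)$ passing through a chosen point of $K$, which by the standard correspondence between point-incidence counts and intersection counts with a parallel $\xi$ of $x$ gives the final claim. The main obstacle is securing the algebraic dependence of $b(\epsilon)$ on $\epsilon$: this is ensured by restricting to the locus where the kernel rank of $d^{\rm lin}_{\epsilon}|_{\text{degree one}}$ is locally constant, where the kernel bundle is locally free.
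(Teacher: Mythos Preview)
Your overall strategy matches the paper's: establish everything at $\epsilon_0$ via the acyclicity of $C^{\rm tot}_{\epsilon_0}(K)$ furnished by Lemma~\ref{l:qisoknot2}, and then propagate across the branch by Zariski-openness of maximal-rank conditions. Your explicit long exact sequence argument for the rank statements is a helpful clarification of what the paper leaves implicit in the phrase ``$C^{\rm tot}_{\epsilon'}(K)$ is generically acyclic.''

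The one place where you are looser than the paper is the final nonvanishing step. Two points. First, a minor slip: the coefficient of $\xi_0$ in $d'_{\epsilon}(b)$ records the intersection number of the disk boundaries with the curve $K$, not a point-incidence count (the latter would be the coefficient of $\xi_1$). Second, and more substantively, for $\epsilon \ne \epsilon_0$ the map $d'_\epsilon$ counts disks \emph{with insertions} passing through $K \subset L_K$, whereas the statement asks about $\R$-invariant disks (no insertions) passing through a parallel $\xi$ of $x$ at infinity; your appeal to a ``standard correspondence'' conflates these. The paper addresses this by first replacing $C_\ast(K)$ with $C_\ast(\gamma)$ for $\gamma$ a parallel of $x$ (legitimate since $K$ is homotopic to $\gamma$ in $L_K$, so the modified total complex stays acyclic), and then arguing by contradiction: if the $\R$-invariant count through $\gamma$ vanished, so would the count with insertions (attaching curves below cannot produce new intersections with a curve sitting at infinity), whence the generator of $H_1(\gamma)$ would survive in the total complex, contradicting acyclicity. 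This extra move is what actually links the acyclicity of $C^{\rm tot}_\epsilon$ to the disk count named in the lemma.
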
 

\begin{proof}
	Let $\epsilon$ denote the augmentation induced by $L_{K}\subset X$ for small $x$. As in the proof of Lemma \ref{l:mixedlinvanish} we see that the differential on $CE^{\rm tot}_{\epsilon}(K)=CE^{\rm lin}_{\epsilon}(K)\oplus C_{\ast}(K)$, with $d'_{\epsilon}$ defined by counting disks with insertions passing through $K$, is a small perturbation of the differential on $C^{\rm tot}_{\epsilon_{0}}(K)$. Since the condition that the differential is an isomorphism is stable, it follows that $C^{\rm tot}_{\epsilon'}(K)$ is generically acyclic. For the last statement, we note that since the knot $K$ is homotopic in $L_{K}$ to a parallel $\gamma$ of $x$, the complex $C^{{\rm tot}'}_{\epsilon'}(K)$ obtained by replacing $C_{\ast}(K)$ with $C_{\ast}(\gamma)$ is still acyclic. If the count of $\R$-invariant disks with positive puncture at a generator of $CE^{\rm lin}_{\epsilon}(K)$ through $\gamma$ were equal to $0$, then the same would be true for disks with insertions, and it would follow that the generator of $H_{1}(\gamma)$ survives in the homology of $C^{{\rm tot}'}_{\epsilon'}(K)$. This contradicts vanishing homology, and it follows that the count must be nonzero.  
\end{proof}

\begin{rmk}
If $K$ is a knot and if $a_{1},\dots,a_{r}$ are its Reeb chords of degree 0 then we can consider the full augmentation variety $\tilde V_{K}$ as the subset of $\C^{r}\times (\C^{\ast})^{3}$ given by
\begin{align*}
\tilde V_{K}=\bigl\{&(\epsilon_{1},\dots,\epsilon_{r},e^{x_{0}},e^{p_{0}},Q_{0})\colon\\
 &a_{j}\mapsto \epsilon_{j},e^{x}\mapsto e^{x_{0}},e^{p}\mapsto e^{p_{0}},Q\mapsto Q_{0} \text{ is an augmentation}\bigr\}.
\end{align*}
Then since there are no generators in negative degree, the degree $0$ linearized contact homology $H_0(CE^{\rm lin}_{\epsilon}(K))$ is the tangent space to the subset of $\tilde V_{K}$ lying over $(e^{x_{0}},e^{p_{0}},Q_0)$. By Lemma~\ref{l:generic} and the easily checked fact that $CE^{\rm lin}_{\epsilon}(K)$ has Euler characteristic $0$, $H_0(CE^{\rm lin}_{\epsilon}(K))=0$ over a generic point in $V_{K}$. It follows that the natural projection map $\tilde V_{K}\to V_{K}$ is an immersion over generic points in $V_{K}$.
\end{rmk}

\subsection{The annulus amplitude for a 2-component link}\label{sec:annulus}
In this subsection we explain how the results in Sections \ref{sec:linlink} and \ref{sec:linknot} allow us to compute the annulus amplitude over a generic point in the augmentation variety. This corresponds to the first step in the recursive calculation of the wave function that we present in Section \ref{sec:toprec}. We choose to treat this case separately since the curve counts at infinity involved in this case do not need any abstract perturbations, and therefore, in combination with Lemma \ref{l:nogenus}, they lead to a direct combinatorial formula. See Section \ref{sec:annulusforHopf} for the computation of the annulus amplitude for the Hopf link.

Let $K_{1}$ and $K_{2}$ be two disjoint knots and let $L_{K_{1}}$ and $L_{K_{2}}$ denote their Lagrangian conormals in $X$. Assume that we are at a generic point in the augmentation variety and let $c=\sum_{j} \gamma_{j}c_{j}$ be a linear combination of Reeb chords of $\Lambda_{K_{1}}$, with $|c_{j}|=1$ for each $j$, such that $c$ represents a generator for $H_1(CE^{\rm{lin}}_{\epsilon}(K_{1}))$. Here we take $\gamma_{j}=\gamma_{j}(e^{x_{1}},Q)$. We consider two counts of holomorphic disks with positive puncture at chords in $c$. 

First, define $\Delta^{c_{j}}_{K_{1}}$ to be the count of augmented disks with positive puncture at $c_{j}$:
\[ 
\Delta_{K_{1}}^{c_{j}}=\sum_{|\mathbf{a}|=0}|\mathcal{M}_{l,m,k}(c_{j},\mathbf{a})|e^{lx}e^{mp}Q^{k}\epsilon(\mathbf{a}),
\]
where we view the augmentation $\epsilon$ as an algebraic function of $(e^{x_{1}},Q)$. Noting that $\left.\Delta_{K_{1}}^{c_{j}}\right|_{p_{1}=\frac{\partial W_{K_{1}}}{\partial x_{1}}}$ is the constant term in the dg-algebra differential (i.e., the differential in $\mathcal{A}_{K}$) of $c_{j}$ twisted by the augmentation $\epsilon$, we find that $\Delta_{K_{1}}^{c_{j}}=0$ along the augmentation variety. This means that the augmentation polynomial divides $\Delta_{K_{1}}^{c_{j}}$. 

If we now define
\[ 
\Delta_{K_{1}}^{c}=\sum_{j}\gamma_{j}\Delta_{K_{1}}^{c_{j}},
\]
then for some $G$, we have
\[ 
\Delta_{K_{1}}^{c} = G(e^{x_{1}},e^{p_{1}},Q)\cdot A_{K_{1}}(e^{x_{1}},e^{p_{1}},Q),
\]
where $A_{K_{1}}$ is the augmentation polynomial of $K_{1}$. 

Second, we define $\Theta^{c_{j}}$ to be the count of augmented holomorphic disks with two mixed negative punctures:
\[ 
\Theta^{c_{j}}=\sum_{|\mathbf{a}|=0}|\mathcal{M}_{l,m,k}(c_{j},\mathbf{a})|e^{lx}e^{mp}Q^{k}\epsilon(\mathbf{a}')\partial_{a_{12}}\partial_{a_{21}},
\] 
where $\mathbf{a}$ runs over all degree 0 words with exactly two mixed negative punctures $a_{12}$ and $a_{21}$, and $\mathbf{a}'$ denotes the subset of pure punctures in $\mathbf{a}$.

Finally, for mixed Reeb chords, let 
\[ 
B = g_{s}\sum_{a_{12},a_{21}}|\mathcal{M}_{l_1,1_2,k}(a_{12}a_{21})|e^{l_{1}x_{1}}e^{l_{2}x_{2}}Q^{k}a_{12}a_{21}
\]
be the generating function for disks with two positive punctures at mixed chords, and let the annulus amplitude be
\[ 
R_{K_{1}K_{2}} = \sum_{l_{1},l_{2},k} R_{l_{1},l_{2},k} e^{l_{1}x_{1}}e^{l_{2}x_{2}}Q^{k}.
\]
Similarly let
\[ 
R_{K_{1}K_{2}}^{c} = \sum_{l_{1},l_{2},k} R_{l_{1},l_{2},k}^{c} e^{l_{1}x_{1}}e^{l_{2}x_{2}}Q^{k}
\]
denote the count of $\R$-invariant annuli with positive puncture at $c$. We point out that Lemma \ref{l:nogenus} below shows that we can perturb $\Lambda_{K}$ so that $R^{c}_{K_{1}K_{2}}=0$.
\begin{thm}\label{t:annulus}
The following equation holds:
\[ 
\left.\frac{\partial}{\partial p}\Delta^{c}_{K_{1}}\right|_{p_{1}=\frac{\partial W_{K_{1}}}{\partial x_{1}}} \cdot \frac{\partial}{\partial x_{1}} R_{K_{1}K_{2}} +
\Theta^{c}B + R^{c}_{K_{1}K_{2}}=0.
\]
\end{thm}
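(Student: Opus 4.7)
The plan is to obtain the stated identity by applying the SFT master equation of Theorem~\ref{t:sftmastereq} to the portion of the Hamiltonian with positive puncture at the degree one cycle $c$, and extracting the coefficient at annulus order in $g_s$.

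First, I would write $\mathbf{H}^c = \mathbf{H}^{c,\mathrm{disk}} + g_s \mathbf{H}^{c,\mathrm{ann}} + O(g_s^2)$ for the Hamiltonian piece with positive puncture $c$ and degree zero negative punctures, where the two pieces collect rigid $\R$-invariant disks and annuli respectively. In $\mathbf{F}_K = g_s^{-1}W + R + \cdots$ I would retain only the disk potential $W$, the mixed annulus potential $R_{K_1K_2}$, and the mixed two-positive-puncture piece responsible for $B$. Conjugating the substitution $p = g_s\partial/\partial x$ by $e^{\mathbf{F}_K}$ gives
\[
e^{-\mathbf{F}_K}\,e^{m\cdot g_s\partial/\partial x}\,e^{\mathbf{F}_K}
= e^{\mathbf{F}_K(x+mg_s)-\mathbf{F}_K(x)}\,e^{m\cdot g_s\partial/\partial x}.
\]
The zeroth-order coefficient in the $g_s$ expansion is $e^{m\cdot\partial W/\partial x}$, so combined with $\mathbf{H}^{c,\mathrm{disk}}$ it reproduces $\Delta^c_{K_1}|_{p_1=\partial W_{K_1}/\partial x_1}$; by assumption on $c$ this vanishes along the augmentation branch.

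At the next order in $g_s$ there are three contributions. The first comes from differentiating $R_{K_1 K_2}$ in the exponent $\mathbf{F}_K(x+mg_s)-\mathbf{F}_K(x)$: the linear term $m_1 g_s \,\partial R_{K_1 K_2}/\partial x_1$ combines with $\mathbf{H}^{c,\mathrm{disk}}$ to produce
\[
\frac{\partial\Delta^c_{K_1}}{\partial p}\bigg|_{p_1=\partial W_{K_1}/\partial x_1}\cdot\frac{\partial R_{K_1 K_2}}{\partial x_1},
\]
since the $p$-charge $m_1$ of each disk summand is exactly what the $p$-derivative of $\Delta^c$ extracts. The second contribution comes from the part of $\mathbf{H}^c$ with two mixed negative punctures $a_{12},a_{21}$, namely $g_s\Theta^c\partial_{a_{21}}\partial_{a_{12}}$: the operator picks out the connected coefficient of $a_{12}a_{21}$ in $\mathbf{F}_K$, which at disk order is $B$, yielding $\Theta^c\cdot B$. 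The third contribution is the direct $\R$-invariant annulus term from $g_s\mathbf{H}^{c,\mathrm{ann}}$, which equals $R^c_{K_1 K_2}$ after augmentation of the degree zero negatives. Setting the sum of these three coefficients to zero yields the identity.

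The main obstacle I expect is the careful bookkeeping to ensure that no additional pieces of $\mathbf{F}_K$ contribute at annulus order. One must check that the quadratic terms in the translation expansion (like $(m_1 g_s)^2 \partial^2 W/\partial x_1^2$) combine with the leftover action of $e^{m\cdot g_s\partial/\partial x}$ on $e^{\mathbf{F}_K}$ into $O(g_s^2)$ terms, and that the pure-component annuli $R_{K_j K_j}$ do not couple into this particular equation. A secondary point is verifying that the coefficients $\gamma_j$ in the linear combination $c = \sum_j \gamma_j c_j$ pass through the expansion so that the output assembles correctly into $\Delta^c$, $\Theta^c$, and $R^c_{K_1 K_2}$ rather than the individual $c_j$-level quantities. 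The connectedness that justifies the use of the master equation \eqref{eq:masterconnected} rather than \eqref{eq:master} enters precisely to strip off the disconnected quadratic-in-$\mathbf{F}$ contributions at this order.
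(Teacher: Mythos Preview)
Your master-equation expansion is a legitimate repackaging, but the paper proceeds more directly: it considers the $1$-dimensional moduli space of (generalized) annuli in $(X,L_K)$ with one boundary component on each $L_{K_j}$ and positive puncture at $c$, and reads off the three terms of the identity as the three types of two-level curves on its boundary (a disk at infinity with a bounding-chain insertion into an annulus below; a disk at infinity with two mixed negative punctures glued to a $B$-disk below; an $\R$-invariant annulus at infinity). In that picture there is a fourth boundary type you have not accounted for: a disk in the symplectization with positive puncture at $c$ and exactly one un-augmented pure negative puncture at a degree-$0$ chord $a$, glued to an annulus in $(X,L_K)$ carrying a single positive puncture at $a$. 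Summed over $a$, this contribution is $d^{\rm lin}_\epsilon c$ paired against the one-puncture annulus amplitude, and it vanishes precisely because $c$ is chosen to be a cycle in linearized contact homology. That is where the cycle hypothesis enters; it is not needed for the leading-order vanishing of $\Delta^c|_{p_1=\partial W_{K_1}/\partial x_1}$, which holds for each $c_j$ separately as the statement $\epsilon\circ d = 0$.

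In your expansion the same missing term is present: when you ``retain only $W$, $R_{K_1K_2}$, and the $B$-piece'' in $\mathbf{F}_K$ you are discarding the part of $\mathbf{F}_K$ that counts annuli with a single positive pure puncture, which sits at the same $g_s$-order as $R_{K_1K_2}$ and is hit by the $\partial_a$'s in $\mathbf{H}^{c,\mathrm{disk}}$. Your extraction of the next-order coefficient is therefore incomplete until you invoke $d^{\rm lin}_\epsilon c=0$ to kill this piece. Once that step is added, your route and the paper's agree.
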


\begin{rmk}
Note that
\[ 
\left.\frac{\partial}{\partial p}\Delta^{c}_{K_{1}}\right|_{p_{1}=\frac{\partial W_{K_{1}}}{\partial x_{1}}}= G\left(e^{x_{1}},e^{\frac{\partial W}{\partial x_{1}}},Q\right)\cdot\frac{\partial A_{K_{1}}}{\partial p_{1}}\left(e^{x_{1}},e^{\frac{\partial W}{\partial x_{1}}},Q\right).
\]
\end{rmk}

\begin{rmk}
A similar result starting from $K_{2}$ implies that
\begin{align*} 
&\left(\left.\frac{\partial}{\partial p}\Delta^{c}_{K_{1}}\right|_{p_{1}=\frac{\partial W_{K_{1}}}{\partial x_{1}}}\right)^{-1}
\left(\frac{\partial}{\partial x_{1}}\Theta^{c}B+R^{c}_{K_{1}K_{2}}\right)\\
&\quad = \left(\left.\frac{\partial}{\partial p}\Delta^{c'}_{K_{2}}\right|_{p_{2}=\frac{\partial W_{K_{2}}}{\partial x_{2}}}\right)^{-1}
\left(\frac{\partial}{\partial x_{2}}\Theta^{c'}B+R^{c'}_{K_{1}K_{2}}\right).
\end{align*}
\end{rmk}

\begin{proof}[Proof of Theorem~\ref{t:annulus}]
The equation accounts for the boundary points of the oriented manifold of annuli with one boundary component on each Lagrangian and positive puncture at $c$. To see this, note that after using bounding chains only SFT splitting remains. The fact that $c$ is a cycle in linearized contact homology implies that the term with a strip in the cylindrical region equals zero.  
\end{proof}

We next explain how to compute $B$ from data at infinity. Consider the coefficient $B(a_{12},a_{21})$ of $a_{12}a_{21}$ in $B$ counting disks with two positive punctures, at $a_{12}$ and $a_{21}$.

\begin{lma}
If $d$ denotes the differential in $CE^{\rm lin}_\epsilon(K_{1},K_{2})$ and $a_{12}$ is a degree $0$ generator, then there exists $b_{12}$ such that $d b_{12}=a_{12}$, and if $B'(b_{12},a_{21})$ is the generating function for augmented disks in the symplectization then
\[ 
B(a_{12},a_{21})= B'(b_{12},a_{21}).
\]  
\end{lma}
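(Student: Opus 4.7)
The existence of $b_{12}$ with $db_{12}=a_{12}$ is immediate from Lemma~\ref{l:mixedlinvanish}: at a generic point of the branch of the augmentation variety corresponding to $L_K\subset X$, the mixed linearized contact homology $H_*(CE^{\rm lin}_\epsilon(K_1,K_2))$ vanishes. Since all Reeb chords have nonnegative degree, the class $a_{12}$ is automatically a degree-$0$ cycle, and hence admits a degree-$1$ primitive $b_{12}$.

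For the identity $B(a_{12},a_{21})=B'(b_{12},a_{21})$, my plan is to realize both sides as the two boundary enumerations of a single one-parameter moduli space, using an SFT stretching argument. Concretely, I would introduce a one-parameter family of almost complex structures on the cobordism $X$ that stretches the neck along $\R\times ST^{\ast}S^{3}$, and consider the parameterized moduli space $\widetilde{\mathcal{M}}$ of augmented generalized holomorphic disks with positive punctures at $b_{12}$ and $a_{21}$. By Theorem~\ref{t:mainmodulispace}, the codimension-one boundary of $\widetilde{\mathcal{M}}$ consists only of two-level SFT breakings; the boundary splittings, crossings, tangencies and capping-path degenerations all cancel via Lemmas~\ref{l:crossunstable}--\ref{l:capping}.

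At one end of the family (fully stretched neck), the only rigid configurations are those in which the entire curve escapes into the symplectization while trivial Reeb strips fill the cobordism end, contributing exactly $B'(b_{12},a_{21})$. At the other end (unstretched), SFT breaking at the degree-$1$ puncture $b_{12}$ produces a rigid $\R$-invariant augmented symplectization disk with positive puncture $b_{12}$ and a single negative puncture $c$, glued above a cobordism disk with positive punctures $c$ and $a_{21}$. Summing over the possible $c$'s weighted by the coefficient of $c$ in $d_\epsilon^{\rm lin}b_{12}=a_{12}$ collapses this sum to $B(a_{12},a_{21})$. Since $\widetilde{\mathcal{M}}$ is a compact oriented 1-manifold with boundary, the signed count of its boundary vanishes, yielding the asserted identity.

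The main obstacle is to verify that no other terms survive in the SFT boundary of $\widetilde{\mathcal{M}}$. The potentially problematic contributions are (i) breakings at $b_{12}$ with more than one negative puncture, which would naively pick up the higher $A_\infty$-part of $db_{12}$ rather than its linearization, and (ii) breakings at the $a_{21}$ puncture. Contribution (ii) is eliminated because $|a_{21}|=0$ precludes a rigid symplectization configuration above it, while (i) is controlled by using the augmentation $\epsilon$ to cap off all negative punctures other than the distinguished one: the gluing then recognizes the linearized differential acting on $b_{12}$. The sign, weight and perturbation-theoretic bookkeeping required to make this rigorous relies on properties of the abstract perturbation scheme of the same sort flagged in Remark~\ref{rmk:parallelchains}, so the argument above is an outline rather than a complete proof.
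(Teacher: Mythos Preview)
Your first paragraph is correct and matches the paper exactly.

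For the second statement, however, your neck-stretching setup is both unnecessary and confused. The moduli space of holomorphic disks in $(X,L_K)$ with positive punctures at $b_{12}$ and $a_{21}$ is already $1$-dimensional for a \emph{fixed} generic almost complex structure, since $|b_{12}|+|a_{21}|=1$. Adjoining a neck-stretching parameter would make the parametrized space $2$-dimensional, and then it is unclear what ``rigid configurations at one end'' is supposed to mean. Your description also has the geometry backwards: in a neck-stretching family, SFT two-level limits appear at the \emph{stretched} end, not the unstretched one.

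The paper's argument is the one you are implicitly groping toward without the parameter: take the $1$-dimensional moduli space $\mathcal{M}$ of disks in $(X,L_K)$ with positive punctures at $b_{12}$ and $a_{21}$, for a single generic $J$. By Theorem~\ref{t:mainmodulispace} its boundary consists only of two-level curves. There are exactly two types. In the first, the index-$1$ curve sits in the symplectization with positive punctures $b_{12},a_{21}$ and negative punctures capped by the augmentation induced by $L_K$; this contributes $B'(b_{12},a_{21})$. In the second, an augmented symplectization disk with positive puncture $b_{12}$ and one distinguished negative puncture sits above a rigid cobordism disk with two degree-$0$ positive punctures; summing over the distinguished negative puncture gives $B(d^{\rm lin}_\epsilon b_{12},a_{21})=B(a_{12},a_{21})$. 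Your final paragraph identifying obstacles (i) and (ii) is on target and applies verbatim to this simpler setup; you just do not need a family of $J$'s to run it.
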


\begin{proof}
The first statement follows from Lemma \ref{l:mixedlinvanish}. To establish the second, identify the two sides as counting boundary components of the 1-dimensional moduli space of disks with two positive punctures at $b_{12}$ and $a_{21}$.
\end{proof}

\begin{rmk}
The above discussion gives the following scheme for determining the annulus amplitude. First, determine the augmentation on Reeb chords by elimination theory. Second, find the differential for linearized homology and preimages of all mixed degree 0 chords. Next, count disks contributing to $\Theta^{c}$, $\Delta^{c}$, and $B'(b_{12},a_{21})$. Together with the augmentation polynomial, this gives the annulus amplitude.
\end{rmk}

\subsection{A-model recursion for the full wave function.}\label{sec:toprec}
In this subsection, we describe the nature of A-model recursion at infinity, which shows how to recover counts of closed curves (i.e.~curves without punctures) of arbitrary Euler characteristic on the conormal from rational curves at infinity. One of the reasons for this is that it shows that the SFT-formalism indeed recovers the wave function and thus the $D$-module that gives the recursion relation. Another is that it gives an inductive scheme for actually computing amplitudes for curves of higher negative Euler characteristic. 

Let $K=K_1\cup\dots\cup K_k$ be a $k$-component link. Let 
\[ 
\mathbf{F}_{K}=F(x,Q)= \sum C_{K;\chi,n,k} \,g_s^{-\chi}\, e^{n\cdot x}\, Q^k
\]  
denote the holomorphic curve amplitude, so that
\[ 
\Psi_{K}=\Psi(x)=\exp(F_{K})
\]
is the wave function, counting all disconnected curves. Note that
\[ 
F_{K}= g_{s}^{-1}\left(F_{K;0} + g_{s}F_{K;1} + g_{s}^{2}F_{K;2} + \dots \right),
\]
where $F_{K;j}=F_{j}(x)$ counts curves of Euler characteristic $\chi=1-j$.

The disk amplitude $F_{K;0}$ determines (an irreducible component of) the augmentation variety and can be computed from it via
\[ 
p_j=\frac{\partial F_{K;0}}{\partial x_j},
\]
where $e^{p_j}=e^{p_j(x)}$ is a local parameterization of the augmentation variety. Note that if $L_K$ is the conormal Lagrangian then $e^{p_j}=e^{p_j(x_j)}$ is a function of $x_j$ only and consequently
\[ 
F_{K;0}(x)=\sum_{j=1}^{k} F_{K_j;0}(x_j),
\]
where $F_{K_j;0}(x_{j})$ is the disk potential of $L_{K_{j}}$.

We next turn to the recursion. We consider curves with several positive and several negative punctures. Either one or none of the positive punctures will have grading 1, and all other positive punctures have grading 0. All negative punctures have grading $0$. We call a curve with one positive puncture of grading 1 an \emph{index 1 curve} and other curves under consideration \emph{index 0 curves}. 

We say that a curve has \emph{type} $(n,\chi)$ if it has $n$ positive grading $0$ punctures and if it has Euler characteristic $\chi$. We say that an index 0 curve attached to an index $1$ curve has \emph{attached type} $(n_0,n_1,\chi)$ if it is attached via $n_0$ bounding chain insertions and positive punctures, has  $n_1$ auxiliary positive punctures (not attached to any negative puncture), and has Euler characteristic $\chi$.

We observe that an index 1 curve of type $(n^{0},\chi^0)$ with $m$ index 0 curves of attached types $(n_0^{j},n_1^{j},\chi^{j})$, $j=1,\dots,m$ attached gives a two-level index 1 curve of type 
\[
(n,\chi)=\left(\sum_{j=0}^{m} n_1^{j} \ , \ \sum_{j=0}^{m}\chi^{j}-\sum_{j=1}^{m} n_0^{j}\right).
\]

The key step for the inductive calculation of the amplitudes $F_{j}$ is the following result.
\begin{lma}
The amplitudes of all index 0 curves of type $(n,\chi)$ with $-\chi+n=r$ is determined by the amplitudes of the curves of index 1 of type $(n,\chi)$ at infinity, together with the amplitudes of the index 0 curves of type $(n,\chi)$ with $-\chi+n<r$.  
\label{l:indexamp}
\end{lma}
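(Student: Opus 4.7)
The plan is to read the SFT master equation \eqref{eq:master},
\[
\mathbf{H}_K|_{p_j = g_s \partial/\partial x_j}\, e^{\mathbf{F}_K} = 0,
\]
as a recursion for $\mathbf{F}_K$ graded by the quantity $r=-\chi+n$. By the two-level description of the boundary of one-dimensional moduli spaces of generalized curves established in Theorem \ref{t:mainmodulispace}, every term in this equation corresponds to a configuration consisting of a single index $1$ curve at infinity glued to one or more index $0$ curves in $X$, either at negative Reeb chord punctures (via the operators $\partial_{\mathbf{c}^-}$ in $\mathbf{H}_K$) or along bounding chains (via the substitution $p_j\mapsto g_s\partial/\partial x_j$). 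A bookkeeping check of the $g_s$-weights built into $\mathbf{H}_K$ and $\mathbf{F}_K$ shows that the total $g_s$-power of such a contribution equals $-\chi+n$ of the resulting two-level curve, so the $g_s^r$-component of the equation involves only two-level curves of total type $(n,\chi)$ with $-\chi+n=r$.

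To run the induction, assume all index $0$ amplitudes at level strictly less than $r$ are known together with the full Hamiltonian $\mathbf{H}_K$ (which is determined by curves at infinity alone). Expanding the $g_s^r$-component of the master equation one obtains, for each choice of a degree $1$ Reeb chord $c$ and of the remaining combinatorial data, a linear equation of the form
\[
\mathcal{L}\bigl(\mathbf{F}_K^{[r]}\bigr) \;=\; \text{(data from $\mathbf{H}_K$ and from $\mathbf{F}_K^{[s]}$, $s<r$)},
\]
where $\mathcal{L}$ collects those terms involving exactly one factor of the unknown level-$r$ potential $\mathbf{F}_K^{[r]}$. The distinguished contribution to $\mathcal{L}$ comes from attaching a single index $0$ piece of the correct type to a rigid-modulo-$\R$ index $1$ curve at infinity via one derivative $\partial/\partial x_j$ applied to $\mathbf{F}_K^{[r]}$; all other terms pair $\mathbf{H}_K$-data with strictly lower-level potentials $\mathbf{F}_K^{[s]}$, $s<r$, and so belong to the inductive data.

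What remains is to show that $\mathcal{L}$ is invertible at a generic point of the augmentation variety, and this is precisely the role of Sections \ref{sec:linlink}--\ref{sec:linknot}. For a single-component knot, Lemma \ref{l:generic} implies that at a generic augmentation $H_1(CE^{\rm lin}_\epsilon(K))$ is one-dimensional and that the count of augmented index $1$ disks with positive puncture on its generator that pass through a parallel of the longitude is nonzero; this count appears as the leading coefficient of $\mathcal{L}$. In the multi-component case one combines this with Lemma \ref{l:mixedlinvanish} to handle mixed Reeb chords, which yields triangularity of the system with respect to the $r$-filtration. Division by the generically nonzero leading coefficient then recovers $\mathbf{F}_K^{[r]}$ in terms of known data. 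The main difficulty I anticipate is the combinatorial bookkeeping required to identify $\mathcal{L}$: one must verify that no term in the $g_s^r$-piece of the master equation involves a product of two or more level-$r$ factors, and that after indexing the resulting scalar equations by Reeb chord input the leading coefficient matches exactly the counts provided by the quasi-isomorphisms of Lemmas \ref{l:qisoknot}--\ref{l:qisoknot2}; once this matching is made explicit, the argument is formal.
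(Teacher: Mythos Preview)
Your approach is the paper's, repackaged through the master equation rather than a direct moduli-space boundary count, and it correctly identifies Lemma~\ref{l:generic} as the source of invertibility. However, two points that you defer to ``bookkeeping'' are in fact the substance of the argument, and your sketch does not supply them.

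First, you claim that the leading operator $\mathcal{L}$ is multiplication by the nonzero disk-through-longitude count, with all other terms involving only $\mathbf{F}_K^{[s]}$ for $s<r$. But attaching a level-$r$ index $0$ curve to the upper piece at a \emph{Reeb chord} (via the $\partial_{\mathbf{c}^-}$ operators in $\mathbf{H}_K$), rather than along a bounding chain, also produces a level-$r$ contribution and would a priori enter $\mathcal{L}$. The paper kills these terms by choosing the positive puncture at the \emph{cycle} $b$ generating $H_1(CE^{\rm lin}_\epsilon(K))$: the relevant top pieces are then augmented strips from $b$ to a degree $0$ chord, and since $b$ is a cycle for the linearized differential, the sum of these contributions vanishes. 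This is precisely why one works with the homology generator rather than an arbitrary degree $1$ chord, and it is what reduces $\mathcal{L}$ to a scalar. Your sketch invokes the generator but does not use that it is a cycle.

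Second, your argument only treats the piece of $\mathbf{F}_K^{[r]}$ with no degree $0$ positive punctures. The lemma also asserts that the amplitudes of level-$r$ index $0$ curves with $n>0$ are determined, and this requires a separate step. The paper handles it by using the other half of Lemma~\ref{l:generic} (vanishing of $H_0$): for a degree $0$ chord $a$ appearing as a positive puncture, choose $b'$ with $d^{\rm lin}b'=a$ and study the $1$-dimensional moduli space with positive puncture at $b'$ and the remaining $n-1$ degree $0$ punctures. Now Reeb-chord breaking \emph{does} contribute, and it produces exactly the unknown level-$r$ amplitude with puncture at $a$; one solves for it using the already-determined $F_r$ and lower-level data. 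Your triangularity claim needs this second pass to be complete.
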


\begin{rmk}
We show in Section \ref{sec:curvesatinfinity} that the amplitudes of curves of index 1 at infinity can be expressed in terms of rational curves only, after introducing extra Reeb chords.
\end{rmk}

\begin{proof}[Proof of Lemma~\ref{l:indexamp}]
 Let $b$ be the generator of $H_1(CE^{\rm lin}_{\epsilon}(K))$ and consider the moduli space of holomorphic curves of index 1 and type $(0,r)$ which have a positive puncture at $b$. The boundary of this moduli space consists of two-level curves of type $(0,r)$. Here there is only one type of broken configuration that contains index $0$ curves of attached type $(1,0,r)$: these are augmented disks with one bounding chain insertion. If $F_r$ is the amplitude of index $0$ curves of type $(0,r)$ then this gives
\[ 
B(e^{x_1},Q)\cdot F_r,
\] 
where $B$ counts $\R$-invariant disks with positive puncture at $b$. By Lemma \ref{l:generic}, $B$ is nonzero.

Furthermore there is one broken configuration that contains an attached curve of type $(0,1,r)$. The upper level of such a curve is a strip with positive puncture at $b$ and one negative puncture. Since  $b$ is a cycle for the linearized differential, we find that the total contribution from such two-level curves equals $0$ (since the augmented curves in the upper level of the two-level configurations already cancel out). Consequently, counting ends of the 1-dimensional moduli space of curves with one positive puncture at $b$, we find:
\[ 
B(e^{x_1},Q)\cdot F_r + R^{2} + R^{\infty}=0,
\]
where $R^{2}$ is the count of two-level curves with both components of type $(n,\chi)$ with $-\chi+n<r$, and $R^{\infty}$ is the count of $\R$-invariant curves of type $(n,\chi)$ with $-\chi+n=r$.
Thus we can solve for $F_{r}$ in terms of index 0 curves of type $(n,\chi)$ with $-\chi+n<r$ and $\R$-invariant curves of index 1 of type $(n,\chi)$ with $-\chi+n\le r$.

In order to complete the proof we then have to show also how to compute the amplitudes of all other curves with $\chi+n=r$. The argument is similar: any such curve has a positive puncture of grading $0$ and by Lemma \ref{l:generic} we can find a linear combination $b'$ of degree $1$ chords such that the image of $b'$ under the linearized differential is the positive puncture of degree $0$. Studying breakings of the moduli space of index 1 curves with positive puncture at $b'$ and $n-1$ other positive grading 0 punctures and arguing exactly as above, we can solve for the desired amplitude. (Note that we use also the knowledge of $F_{K;r}$ in this calculation.) This finishes the proof.
\end{proof}

\subsection{Curves at infinity}\label{sec:curvesatinfinity}
In this subsection we discuss the curves at infinity. We recall the strategy for describing the holomorphic curves in the $\R$-invariant region, see \cite{EENS}. Represent $K$ as a braid around the unknot. Then $\Lambda_{K}$ lies in a $1$-jet neighborhood of $\Lambda_{U}$, where $U$ is the unknot. Furthermore, if we shrink $K$ toward $U$, holomorphic curves with boundary on $\Lambda_{K}$ converge to holomorphic curves on $\Lambda_{U}$ with flow lines attached. As in the calculation of the knot contact homology differential, we choose the almost complex structure so that the projection of holomorphic curves into $T^{\ast} S^{2}$ remain holomorphic. Furthermore, we take the link to lie in a small ball in $S^{3}$, and we call Reeb chords contained in the unit cotangent bundle restricted to this ball ``small''. It is straightforward to check that any non-small Reeb chord has index at least $2$.

\begin{lma}\label{l:nogenus}
Let $K$ be any link. Then there exists a deformation of $\Lambda_{K}$ such that any rigid holomorphic curve on $\R\times\Lambda_{K}$ is rational and such that all small Reeb chords of $\Lambda_{K}$ have degrees $0$, $1$, or $2$. 
\end{lma}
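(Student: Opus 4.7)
The plan is to adapt the braid model of \cite{EENS}. Represent $K$ as the closure of a braid $\beta$ around an unknot $U\subset S^{3}$ so that $\Lambda_{K}$ is $C^{\infty}$-close to several parallel copies of $\Lambda_{U}$ inside a $1$-jet neighborhood. Choose the almost complex structure $J$ on $\R\times ST^{\ast}S^{3}$ to be adapted to the geodesic-flow projection $\pi\colon ST^{\ast}S^{3}\to T^{\ast}S^{2}$, so that every $J$-holomorphic curve with boundary on $\R\times\Lambda_{K}$ projects to an honest holomorphic curve in $T^{\ast}S^{2}$ with boundary on $\pi(\Lambda_{K})$.

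For the Reeb chord degrees, recall that small Reeb chords lie in the unit cotangent bundle over a small ball containing $K$. In that ball the unknot conormal $\Lambda_{U}$ has exactly three families of binormal chords, corresponding in a local flat model to geodesics of Morse indices $0$, $1$, and $2$. Restricting to $\beta$ splits each family into self-chords of single strands and mixed chords between pairs of strands, and their gradings, computed by the Conley--Zehnder index formula of \cite{EENS}, differ from the parent unknot index by at most one. A small generic Morse--Bott perturbation within the $1$-jet neighborhood makes all of these chords nondegenerate with gradings in $\{0,1,2\}$.

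For rationality of rigid curves I would combine the projection above with the further projection $T^{\ast}S^{2}\to S^{2}$. A rigid $J$-holomorphic curve $u\colon(\Sigma,\partial\Sigma)\to(\R\times ST^{\ast}S^{3},\R\times\Lambda_{K})$ projects to a holomorphic map $\bar u\colon\Sigma\to S^{2}$ sending boundary components to the finite point set $\pi(\Lambda_{K})\cap S^{2}$. Rigidity modulo $\R$-translation forces formal dimension $1$, and a Riemann--Hurwitz/index split of this dimension shows that every base component must be a disk, since any positive-genus component or component with more than one boundary circle contributes strictly positive index coming from antiholomorphic differentials on the projected surface. Combining this with the anchored flow-tree model of \cite{EENS}, which represents rigid curves on $\Lambda_{K}$ as trees of disks on $\Lambda_{U}$ glued along gradient flow lines, then shows that each rigid configuration has the Euler characteristic of a disk.

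The hard part will be the rationality step: one must verify that no higher-genus rigid curve survives when the $1$-jet perturbation is small but nonzero. The plan is to combine the projection-based cokernel computation with standard SFT compactness, showing that for $J$ generic among those adapted to $\pi$, positive-genus configurations carry positive-dimensional cokernel of the linearized Cauchy--Riemann operator and hence fail to be rigid. The degree computation for small chords is by contrast purely local and should follow directly from the index formulas in \cite{EENS} together with a standard Morse--Bott perturbation argument.
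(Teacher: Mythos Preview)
Your rationality argument has a genuine gap and does not match the mechanism the paper actually uses. First, the projection claim is incorrect: under $ST^{\ast}S^{3}\to T^{\ast}S^{2}$ the Legendrian $\Lambda_{K}$ goes to an exact Lagrangian (close to the zero section), not to a finite set of points, so the further projection to $S^{2}$ does not collapse boundaries to points and the Riemann--Hurwitz/antiholomorphic-differential count you invoke is not available. Second, even granting an index split, the statement ``positive genus forces positive-dimensional cokernel, hence not rigid'' is not how rigidity works: rigid means index $1$ (modulo $\R$), and there is no general principle preventing a genus-one curve from being transversely cut out of index $1$. Indeed the lemma asserts only that \emph{some} deformation of $\Lambda_{K}$ achieves rationality, which already signals that this is not an automatic index obstruction.

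The paper's argument is concrete and quite different. In the $K\to U$ limit, rigid curves on $\R\times\Lambda_{K}$ degenerate to big disks on $\Lambda_{U}$ with Morse flow trees attached; a nonrational curve must arise either from a flow line connecting a big disk to itself, or from a flow graph that is not a tree. For the first case the key observation is that, in the limit, the attached flow strip is nearly horizontal in the symplectization direction, so for the glued curve to close up the flow line must join two boundary points of the big disk lying at the \emph{same symplectization height}. One then exhibits an explicit perturbation of $\Lambda_{K}$---introducing extra short Reeb chords that subdivide the Morse flow---so that no flow line connects equal-height points; this is the content of the two figures in the proof. The non-tree flow-graph case is handled by separating the braid strands by increasing amounts so that the last-step flow lines lift to essentially vertical strips, again preventing closure. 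The claim about small-chord degrees $0,1,2$ is a byproduct of this specific perturbation (the extra short chords introduced have those degrees), not a general Conley--Zehnder computation for an arbitrary Morse--Bott perturbation as you suggest.
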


\begin{proof}
 Since the only holomorphic curves with boundary on $\Lambda_{U}$ are disks, it follows that holomorphic curves on $\Lambda_{K}$ must limit to these holomorphic disks with flow lines attached. In particular, curves of Euler characteristic $\ne 1$ must either contain a flow tree connecting such a disk to itself or contain a flow graph which is not a tree. Note however that such a configuration lifts to a holomorphic curve in the symplectization if and only if the lifts of the disk and the flow tree  or the flow graph close up. 
 
 We consider first the case of a flow tree connecting the big disk to itself. In the limit the flow line is very thin and therefore nearly horizontal (in the symplectization direction). It follows that if we find a perturbation of $\Lambda_{K}$ such that no flow line connects points of the disks of the unknot that lift to same height, then there are no such non-rational curves. Figure \ref{fig:height} shows the points of equal heights and it is straightforward to check the representation of the braid shown in Figure \ref{fig:noannuli} has no flow lines connecting points at the same height.
 
\begin{figure}
	\centering
	\includegraphics[width=0.6\linewidth]{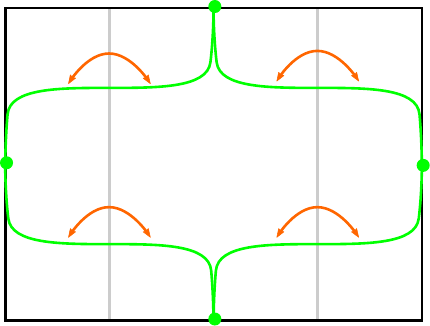}
	\caption{Boundaries of big disks on the conormal of the unknot, compare Figure~\ref{fig:unknotdisk} below. The gray lines subdivide the disk boundaries in two components. The height function is symmetric in these components, decreases in one, and increases in the other.}
	\label{fig:height}
\end{figure}

\begin{figure}
\labellist
\small\hair 2pt
\pinlabel braiding at 54 3
\endlabellist	
	\centering
     \includegraphics[width=0.6\linewidth]{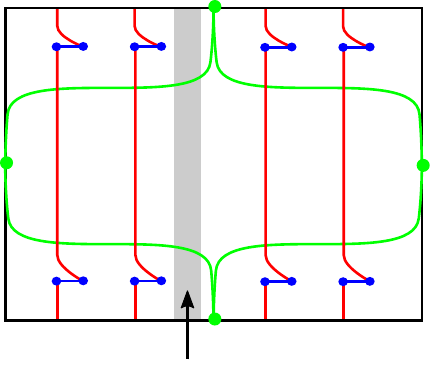}
	\caption{Additional short Reeb chords subdivide the Morse flow so that no points of equal height on the big disks are connected.}
	\label{fig:noannuli}
\end{figure}

 We next consider flow graphs other than trees. Here our argument uses the specific form of braid we use. We separate the strands in the braid by an increasing amount in each step. This means that flow graphs of $n$ sheets can be viewed as flow graphs of $(n-1)$ sheets with flow lines attached. Furthermore, from the point of view of the lifts of curves in earlier steps, the flow lines in the last step lift to disks in the symplectization that are virtually constant in the symplectization direction until they go vertically down (negative puncture) or up (positive puncture) at the Reeb chord. It follows from this that there are no higher genus flow graph contributions from flow graphs that are not trees. 
\end{proof}

We point out that even though the rational curves of Lemma \ref{l:nogenus} are disks, they may give rise to generalized curves with $\chi<1$ via insertion of bounding chains which here correspond to linking. Basic rational curves are straightforward to describe: they are as easy to find as the curves in the contact homology differential. The exact contributions at higher genus from curves with several positive punctures involve the nature of the actual perturbation scheme. We next give a conjectural description of this.

We first discuss what generalized curves there are. The obvious generalized curves are the 1-vertex graphs with edges connecting this vertex. Here there is a disk of dimension 1 at the vertex and the edges correspond to self linking. As we will see in the examples below the generalized curves that contribute to the Hamiltonian include also graphs where there are trivial strips at the vertices that are not the main vertex. 

Furthermore, at the main vertex there can be only transversely cut out 1-dimensional curves. 
To see this, we must discuss possible contributions from 1-dimensional curves with branched covers of trivial Reeb chord strips attached. Consider a branch cover of degree $d$. We use an obstruction bundle argument. If we fix the location of the branch points, the linearized holomorphic curve equation has index $-2d$ and zero kernel. We can find a section of the obstruction bundle which is nonzero over the interior of the space branch points. When a branch point moves to the boundary, the curve breaks in two branched covers of trivial strips. Continuing this way, we eventually break the curve into only trivial strips, and the contribution to the moduli space is controlled by the usual linking and intersection with $C_{K}$.

\begin{rmk}\label{r:factorsforextrapositive}
We conjecture that rational curves in the $\R$-invariant region with $m$ positive punctures should be counted with a factor of
\[ 
\pm e^{rg_{s}}(e^{\frac12g_{s}}-e^{-\frac12g_{s}})^{m},
\]
where $r$ is determined by self-linking (including interior intersections with $C_{K}$) as usual, and where the other factors come from extension of the abstract perturbation and limits to the original disk with constant lower Euler characteristic curves attached. 

To motivate this, we consider the construction of the perturbation scheme. The scheme starts from indecomposable curves (minimal energy and disks in the symplectization with only one positive puncture) and makes them regular. These curves then appear in various configurations at the boundary of curves of the next complexity. Here we pick a perturbation near the boundary and extend in order to make curves of the next complexity regular. In this case the boundary configurations correspond to breaking at Reeb chords, interior crossing, and boundary crossing. Near the latter two, the gluing arguments of Lemmas \ref{l:boundarysplitandcross} and \ref{l:elliptic} give a neighborhood of the boundary in the moduli space. Consider a disk with $m$ interior boundary crossings. Such a disk lies at the boundary of a moduli space of disks with $m$ holes, which can further split into a disk with $m$ positive punctures and a disk with $1$ positive and $m$ negative punctures. Gluing a strip with one positive and one negative puncture to the lower half we find a disk with $m$ positive punctures in the symplectization. The gluing analysis at the interior crossing gives a count:
\[ 
(e^{\frac12 g_{s}}-e^{-\frac12 g_{s}})^{m},
\] 
and we claim that there exists a perturbation scheme such that these small curves can be concentrated near positive punctures. One can study boundary crossings in a similar spirit with the same result.

We point out that this conjecture asserts more than the mere existence of a perturbation scheme: it says that there is such a scheme for which the contribution from rational curves at infinity has a specific form.
\end{rmk}

We end this section with a discussion of the 2-component link case. Here we claim that if we choose the conormal $\Lambda_{K_{1}}\cup \Lambda_{K_{2}}$ so that Lemma \ref{l:nogenus} holds, then there are no generalized holomorphic annuli at infinity: 

\begin{lma}\label{l:noformalannuli}
Let $K=K_{1}\cup K_{2}$ be such that Lemma \ref{l:nogenus} holds. Then $R^{c}_{K_{1}K_{2}}=0$ (see Theorem \ref{t:annulus} for notation).
\end{lma}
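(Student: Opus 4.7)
The plan is to leverage Lemma \ref{l:nogenus} together with the locality of bounding chains to rule out any generalized $\R$-invariant annulus with one boundary component on each $\Lambda_{K_j}$ and positive puncture at $c\subset \Lambda_{K_1}$.

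First I would invoke Lemma \ref{l:nogenus}: after the chosen deformation of $\Lambda_K$, every rigid holomorphic curve in $\R\times ST^{\ast}S^{3}$ with boundary on $\R\times\Lambda_{K}$ is a disk, i.e.\ has $\chi=1$. The flow tree and flow graph argument of Lemma \ref{l:nogenus}---separating the braid strands by successively increasing amounts and keeping the projection to $T^{\ast}S^{2}$ holomorphic---extends verbatim to $1$-parameter families, where the single additional modulus replaces $\R$-translation. Consequently the $1$-dimensional curve at the main vertex of any generalized configuration contributing to $R^{c}_{K_{1}K_{2}}$ is also a topological disk. Hence every vertex curve of the graph $\Gamma$ is a disk, and the Euler characteristic formula $\chi(\Gamma_{\mathbf{u}})=|V(\Gamma)|-|E(\Gamma)|$ forces $|V(\Gamma)|=|E(\Gamma)|$ in order to produce an annulus ($\chi=0$).

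Next I would exploit the locality of bounding chains. Because $\nabla f$ preserves each component of $L_{K}=L_{K_{1}}\sqcup L_{K_{2}}$ and each unstable manifold $W^{\mathrm{u}}(\kappa^{1}_{j})$ lies in a single $L_{K_{j}}$, the bounding chain $\sigma_{u}$ of a disk whose boundary is contained in $\R\times\Lambda_{K_{j}}$ lies entirely in $\R\times L_{K_{j}}$. An edge between two distinct vertices is an intersection of a boundary with a bounding chain; a self-edge is either such an intersection or the intersection of a shifted interior with $C_{K}$. In all cases the edge is confined to a single component, so no edge of $\Gamma$ connects a vertex supported on $\R\times\Lambda_{K_{1}}$ to one supported on $\R\times\Lambda_{K_{2}}$.

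The hard part is to handle potential mixed disks whose boundary alternates between $\Lambda_{K_{1}}$ and $\Lambda_{K_{2}}$ through mixed Reeb chord endpoints. I would argue that, refining Lemma \ref{l:nogenus} to $1$-parameter families, the braid perturbation combined with the holomorphic projection to $T^{\ast}S^{2}$ forbids such mixed disks from appearing as the main vertex: the puncture $c$ lives on $\Lambda_{K_{1}}$ and its attached disk stays on $\R\times\Lambda_{K_{1}}$. The only remaining role for a mixed disk is as an auxiliary rigid vertex, but since no edge of $\Gamma$ bridges the two components, the graph $\Gamma$ splits as a disjoint union $\Gamma_{1}\sqcup\Gamma_{2}$ of subgraphs supported on $\R\times\Lambda_{K_{1}}$ and $\R\times\Lambda_{K_{2}}$ respectively. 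The corresponding generalized curve is therefore disconnected. An annulus is connected, so this disconnected configuration contributes $0$ to $R^{c}_{K_{1}K_{2}}$, whence $R^{c}_{K_{1}K_{2}}=0$. The main technical obstacle is the refinement of Lemma \ref{l:nogenus} that rules out mixed $1$-parameter disks at the main vertex; this amounts to repeating the strand-separation argument in the parameterized setting.
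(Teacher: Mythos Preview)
Your argument has a genuine gap at exactly the point you flag as ``the hard part.'' You claim that the main-vertex disk with positive puncture at the pure chord $c$ on $\Lambda_{K_1}$ ``stays on $\R\times\Lambda_{K_1}$,'' and you try to justify this by a refinement of the strand-separation argument. But this is simply false: disks with a pure positive puncture and mixed negative punctures exist and are counted. The Hopf link computation in the paper gives an explicit example: the disk contributing the $\partial_{a_{12}}\partial_{a_{21}}$ term in $H(c_{11})$ has positive puncture at the pure chord $c_{11}$ and two mixed negative punctures, so its boundary visits both $\Lambda_{K_1}$ and $\Lambda_{K_2}$. Once such a disk sits at a vertex, your decomposition $\Gamma=\Gamma_1\sqcup\Gamma_2$ by component breaks down, because that vertex cannot be assigned to a single component; the rest of the argument then collapses.

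The paper's proof bypasses this entirely with an algebraic observation you are missing: the augmentation $\epsilon$ induced by the disconnected filling $L_{K_1}\sqcup L_{K_2}$ vanishes on all mixed chords. The quantity $R^{c}_{K_1K_2}$ is an augmented count, so any disk with a mixed negative puncture contributes zero once you cap off with $\epsilon$. The surviving disks therefore have only pure punctures, and since $c$ is pure on $\Lambda_{K_1}$, their boundaries lie entirely on $\Lambda_{K_1}$. A self-insertion on such a disk produces a formal annulus with homotopically trivial boundary on $\Lambda_{K_2}$, which does not contribute to the mixed annulus amplitude. The point is not that mixed disks do not exist, but that they are killed by the augmentation.
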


\begin{proof}
Formal annuli come from disks with a self insertion. Since the augmentation equals 0 on all mixed chords, such an annulus has homotopically trivial boundary in at least one $\Lambda_{K_{j}}$ and hence does not contribute to the amplitude considered.	
\end{proof}

\section{Knot contact homology in quotients of $S^{3}$}
\label{sec:quotients}
In this section we briefly turn to the study of more general ambient spaces than $S^{3}$. We use similar notation: let $M$ be a 3-manifold, $T^{\ast}M$ its cotangent bundle, and $ST^{\ast}M$ its unit cotangent bundle. If $K\subset M$ is a knot then denote its Lagrangian conormal $L_{K}$ and Legendrian conormal $\Lambda_{K}$. 

Here we consider the setting where $M$ is a quotient of $S^3$ by a discrete group, which we assume is a group of isometries of $S^3$ with the round metric.
Iit is known how to construct a large $N$ dual to $T^{\ast}M$ using toric geometry, see e.g. \cite{Marino}. In particular, the K\"ahler parameters in the large $N$ dual correspond to the free homotopy classes of loops in $M$. We will discuss the augmentation variety in this setup, and show in particular how deformation parameters associated to these K\"ahler parameters arise in this context from the orbit contact homology dg-algebra. 

In general, if $\Lambda\subset Y$ is a Legendrian submanifold of a contact manifold then the Chekanov--Eliashberg dg-algebra $CE(\Lambda)$ discussed above, generated by Reeb chords, is an algebra over the orbit contact homology dg-algebra $\mathcal{Q}(Y)$ of $Y$. Here the orbit contact homology algebra is the closed string counterpart of the Chekanov--Eliashberg algebra generated by Reeb orbits and with differential counting holomorphic spheres with one positive and several neagtive punctures.   

When the first Chern class of $Y$ equals $0$, then $\mathcal{Q}(Y)$ admits a grading and one can restrict the coefficient algebra of the Chekanov--Eliashberg algebra to the degree 0 orbit contact homology. In particular, when $M$ is a geometric quotient of the 3-sphere and $Y = ST^{\ast}M$, we have the following.

\begin{lma}\label{l:deg0CH}
If $M$ is a closed $3$-manifold with constant sectional curvature $1$ (universal cover $S^{3}$) and $Y = ST^{\ast}M$ then the homology of the orbit dg-algebra in degree 0, $H\mathcal{Q}_{0}(Y)$, is a commutative algebra generated by the free homotopy classes of loops in $M$.	
\end{lma}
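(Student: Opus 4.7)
The plan is to identify closed Reeb orbits of $ST^\ast M$ with closed geodesics of $M$, compute the SFT grading in terms of the Morse index on the free loop space, and use the constant positive curvature to pin down precisely which orbits live in degree zero.

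First I would note that under the Liouville contact form on $Y = ST^\ast M$, the Reeb vector field generates the geodesic flow, so closed Reeb orbits correspond bijectively to oriented closed geodesics of $M$; components of the free loop space $\Lambda M$ are indexed by conjugacy classes in $\pi_1(M) = \Gamma$, equivalently by free homotopy classes. Since $\dim Y = 5 = 2n - 1$ with $n = 3$, the standard grading convention $|\gamma| = \mu_{CZ}(\gamma) + n - 3$ reduces to $|\gamma| = \mu_{CZ}(\gamma)$, and the classical identification of the Conley--Zehnder index with the Morse index of the corresponding geodesic gives $|\gamma| = \operatorname{ind}(\gamma)$ on the appropriate component of $\Lambda M$, consistent with the Morse-index grading convention used in the paper for Legendrian chords.

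Second I would compute these indices using the round geometry. Any closed geodesic of $M$ lifts to an arc of a great circle in $S^3$; Jacobi fields vanishing at both endpoints exist precisely when the length is a multiple of $\pi$, and each such conjugate point contributes $n - 1 = 2$ to the Morse index. Thus a closed geodesic of length at most $\pi$ has index $0$, while longer geodesics have index at least $2$. On the other hand, each nontrivial conjugacy class $[\alpha] \subset \Gamma$ is represented in $M$ by a minimizing closed geodesic of length $\min_{p \in S^3} d(p, \alpha(p)) \leq \pi$, which is an energy minimum in its component of $\Lambda M$ and hence has Morse index zero. The trivial class contributes nothing in degree zero because its nonconstant critical points are iterated great circles of length $2\pi k$ and index $(2k-1)(n-1) \geq 2$. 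Consequently the degree zero orbits are in natural bijection with the nontrivial free homotopy classes of loops in $M$. Since all generators of $\mathcal{Q}(Y)$ then sit in non-negative degree, the differential $d \colon \mathcal{Q}_0 \to \mathcal{Q}_{-1}$ vanishes for degree reasons, so $H\mathcal{Q}_0(Y)$ equals the degree zero piece of the free graded-commutative algebra on these generators. All such generators being of even degree zero, the graded-commutativity reduces to ordinary commutativity and $H\mathcal{Q}_0(Y)$ is the polynomial algebra freely generated by the free homotopy classes of loops in $M$.

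The principal technical obstacle is the highly degenerate nature of the round geodesic flow: the closed geodesics typically appear in positive-dimensional Morse--Bott families (isometric copies of a single minimizer), so the Reeb orbits of $ST^\ast M$ are not isolated. A careful proof therefore requires a Morse--Bott perturbation of the contact form with auxiliary Morse functions on these orbit families, in the spirit of Bourgeois' cascades; one must check that after perturbation the minimum critical points of each family remain in degree zero while the higher critical points lie in strictly positive degree, and that holomorphic cascades between the perturbed orbits produce no new degree zero differential. For Hopf-type subgroups $\Gamma$, where the minimizing family can sweep out a three-dimensional submanifold of $S^3$, this bookkeeping is particularly delicate, and it forms the main substantive step in what is otherwise an index calculation.
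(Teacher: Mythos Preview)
Your approach is essentially the same as the paper's: identify Reeb orbits with closed geodesics, use the constant-curvature geometry to see that the minimizing geodesic in each nontrivial free homotopy class has length $\le\pi$ and Morse index $0$ while all other closed geodesics have index $\ge 2$, and conclude that the degree $0$ orbit algebra is freely generated by these classes with trivial differential. The paper handles the Morse--Bott degeneracy you flag with a one-line ``after small perturbation'', so you are in fact more explicit than the paper about this technical point rather than less.
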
 

\begin{proof}
For the standard action from $p\, dq$ as the contact form on 
$Y=ST^{\ast}M$, closed Reeb orbits are lifts of geodesic loops. Consider a closed geodesic in $M$ that lifts to a geodesic arc of length $\ell< \pi$ in $S^{3}$ with respect to the standard metric. Then any other geodesic in its homotopy class has index $\ge 2$. For geodesics of length equal to $\pi$ the same argument applies after small perturbation. This means that there is a contact form with one Reeb orbit for each free homotopy class and all other orbits of grading $\ge 2$. Thus the contact homology orbit algebra is $0$ in degrees $1$ and $\leq -1$, and in degree $0$ it is the commutative algebra generated by free homotopy classes of loops. The result follows.
\end{proof}

As a special case of Lemma~\ref{l:deg0CH}, for $M=S^{3}$ there is a unique homotopy class of free loops, and so the orbit algebra is just the ground ring in degree $0$, which explains why we can disregard the orbit algebra in this case. 

Now let $K$ be a link in $M$ with Lagrangian conormal $L_{K}$. Exactly as in $S^{3}$ we can use a closed form in a neighborhood of $K$ to shift $L_{K}$ off the zero section and then consider $L_{K}\subset X_{M}$ as a Lagrangian in the large $N$ dual $X_{M}$ of $T^{\ast}M$. Consider now the Chekanov--Eliashberg dg-algebra $CE(\Lambda_{K})$ and restrict the coefficients to the degree 0 homology $H\mathcal{Q}_{0}(ST^{\ast}M)$. Using Lemma \ref{l:deg0CH} we then find that the resulting
knot contact homology $H_*(CE(\Lambda_{K}))$ in this case can be considered as an algebra over
\[ 
\C[e^{x},e^{p},Q, H\mathcal{Q}_{0}(ST^{\ast}M)].
\]
In particular we get the expected deformation variables for the augmentation variety. Note also that if $\gamma$ is a free homotopy class then the value on $\gamma$ corresponds to the count of curves in $X_{M}$ with one positive puncture at $\gamma$:
\[ 
\epsilon(\gamma)=\sum_{k_{0},k_{1},\dots, k_{m}} C_{k} Q_{0}^{k_{0}}\cdots Q^{k_{m}}_{m}.
\]
Here $C_{k}$ counts curves with one positive puncture at $\gamma$ representing the class $\sum_{j}k_{j}t_{j}$, where $t_{j}$ is the $j^{\rm th}$ K\"ahler parameter.

\section{Examples}

In this section, we outline some illustrative computations for a few simple examples: the unknot and Hopf link in $S^3$ and the line in $\mathbb{R}P^3$. 

\begin{rmk}
Some of the details in the below calculations (signs, the effect of choices of capping paths, etc.) are not made explicit here. It is an interesting open question to make these choices in some systematic manner in order to get a direct combinatorial formula for the SFT Hamiltonian in terms of a braid presentation. We leave these details to future work and view the examples worked out here as good indications that this is indeed possible.
\end{rmk}

\subsection{The unknot}
Let $U$ denote the unknot in $S^3$. The Chekanov--Eliashberg dga of $\Lambda_U$, $CE(\Lambda_U)$, was worked out in \cite{EENS}.
In the conormal of the unknot, there is one generator $c$ of degree $1$ and no generators of degree 0, and there are four disks contributing to $\partial c$; see Figure \ref{fig:unknotdisk}.
We can choose capping paths such that (possibly after a change of variables) the Hamiltonian is
\[ 
\mathbf{H}= \mathbf{H}^{c} = 1 - e^{x} -e^{p} + Qe^{x}e^{p}.
\]  
Setting $p=g_{s}\frac{\partial}{\partial x}$, we get the recursion relation for the unnormalized colored HOMFLY-PT polynomial of the unknot.

\begin{figure}[htp]
\labellist
\small\hair 2pt
\pinlabel $c^+$ at 66 102
\pinlabel $c^+$ at 66 -2
\pinlabel $c^-$ at -2 51
\pinlabel $c^-$ at 134 51
\endlabellist
	\centering
	\includegraphics[width=0.4\linewidth]{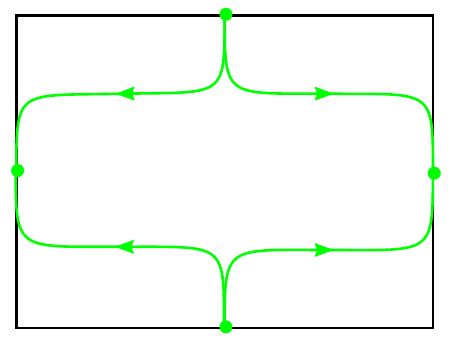}
	\caption{Rigid disks on the conormal of the unknot $U$. Here the Legendrian torus $\Lambda_U$ is the depicted square with sides identified, and the curves denote the boundaries of the four rigid disks. The labels $c^+,c^-$ denote the ending and beginning points of the Reeb chord $c$. Compare \cite[Figure 20]{EENS}.}
	\label{fig:unknotdisk}
\end{figure}

We also illustrate how to reconstruct the wave function recursively. We write the wave function as
\[ 
\Psi_{U}= \exp\left(g_{s}^{-1}F_{U;0}+F_{U;1}+g_{s}F_{U;2}+\dots\right).
\]
Since there are only disks at infinity we find that the disks with insertion must cancel as follows:
inserting $F_{U;m}$ once must cancel with all possible insertions of lower genus curves that give the same Euler characteristic. This means that
\[ 
g_{s}\frac{d F_{U;m}}{dx} = \sum \frac{1}{s_{1}!\dots s_{n}!}\left(\frac{d^{s_{1}}F_{U;m_{1}}}{dx^{s_{1}}}\right)^{t_{1}}\dots \left(\frac{d^{s_{n}}F_{U;m_{n}}}{dx^{s_{n}}}\right)^{t_{n}},
\] 
where the sum ranges over all products for which $\sum s_{j}m_{j}t_{j}=m$ and all $m_{j}\ge 2$. It then follows that
\[ 
e^{g_{s}\frac{d}{dx}} e^{F_{U}}= e^{\frac{dF_{U;0}}{dx}}e^{F_{U}}= \frac{1-e^{x}}{1-Qe^{x}}e^{F_{U}};
\]
to see this note that all terms in the right hand side that includes $\frac{d^{s}F_{U;m}}{dx^{s}}$ where $m+s>1$ cancel out. We conclude that
\[ 
\left(1 - e^{x} -e^{g_{s}\frac{d}{dx}} + Qe^{x}e^{g_{s}\frac{d}{dx}}\right)e^{F_{U}}=0,
\]
as expected.

\subsection{The line in $\R P^{3}$}
In this subsection we carry out the calculation of the knot contact homology of a projective line $\ell\subset \R P^{3}$. This is a special case of the setup discussed in Section~\ref{sec:quotients}. The large $N$ dual of $\R P^{3}$ is local $\C P^{1}\times\C P^{1}$, i.e., the total space of the $\mathcal{O}(-2,-2)$ line bundle over $\C P^{1}\times\C P^{1}$. 

We use the constant sectional curvature $1$ metric on $\R P^{3}$. In this metric there is a Bott-family of closed geodesics of length $\pi$ where the Bott-manifold is $G_{2,4}$, the Grassmann manifold of 2-planes in $\R^{4}$. Furthermore, $\ell$ can be represented by a closed geodesic of length $\pi$ and there is a $T^{2}$ Bott-family of binormal geodesics of $\ell$ of length $\pi$ (where in fact all Reeb chords are also Reeb orbits). Morsifying this situation gives one Reeb chord of index 0, corresponding to the minimum on $G_{2,4}$, and a shortest Reeb chord of index 1, corresponding to the minimum of the $T^{2}$-family. (To see that the index of the chord equals 1, note that we can shrink it to a constant by moving the endpoints along $\ell$.) Let $\gamma$ denote the index 0 Reeb orbit and $c$ the index 1 Reeb chord. 

\begin{thm}
	The contact homology differential $\partial$ on $CE(\Lambda_{\ell})$ satisfies
	\[ 
	\partial c = e^{x} - e^{-x} + e^{p} + Q e^{-p} + \gamma.
	\]
\end{thm}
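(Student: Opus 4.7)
The plan is to enumerate the rigid holomorphic disks in the large $N$ dual $X_{\R P^{3}}$ (the total space of $\mathcal{O}(-2,-2)\to\C P^{1}\times\C P^{1}$) with boundary on $L_{\ell}$, one positive puncture at $c$, and negative punctures that are constrained by action and degree to be either empty or a single copy of $\gamma$. By Lemma \ref{l:deg0CH} applied to $M=\R P^{3}$, the orbit algebra in degree $0$ is the polynomial algebra generated by free homotopy classes, and since $\pi_{1}(\R P^{3})=\Z/2$ the only nontrivial class contributes the single generator $\gamma$. The Morsification described in the paragraph preceding the theorem ensures that $c$ is the unique Reeb chord of index $1$ and that all other chords have index $\geq 2$, so $\partial c$ must lie in the submodule with basis $\{1,\gamma\}$ over $\C[e^{\pm x},e^{\pm p},Q]$.

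To count the four terms without a $\gamma$-puncture, I would pass to the double cover $\pi\colon S^{3}\to \R P^{3}$. The line $\ell$ lifts to a great circle $\tilde\ell\subset S^{3}$, i.e., the standard unknot, and $\Lambda_{\ell}$ lifts to $\Lambda_{\tilde\ell}$. The chord $c$ lifts to the single Reeb chord $\tilde c$ computed in the unknot subsection, whose differential $\partial\tilde c=1-e^{\tilde x}-e^{\tilde p}+Qe^{\tilde x}e^{\tilde p}$ is realized by the four disks of Figure \ref{fig:unknotdisk}. The nontrivial deck transformation acts by an involution on this set of four disks; descending via $\pi$, the corresponding quotient configurations are rigid disks in $X_{\R P^{3}}$ whose boundary wraps the longitude of $\ell$ by a \emph{half} of the longitude of $\tilde\ell$ in each of the two directions, and similarly for the meridian. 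Keeping track of how the classes $(n_{\tilde x},n_{\tilde p})\in\{(0,0),(1,0),(0,1),(1,1)\}$ reorganize under the change of variables forced by the double cover of $\ell$ and by the degeneration $\mathcal{O}(-1)^{\oplus 2}\to\mathcal{O}(-2,-2)$ of the ambient Calabi--Yau, and tracking signs and framings as in Section \ref{sec:linknot}, these four disks contribute precisely $e^{x}-e^{-x}+e^{p}+Qe^{-p}$.

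The remaining summand $\gamma$ is produced by a Morse--Bott cascade. Before Morsification, the chord $c$ and the orbit $\gamma$ both sit at length $\pi$, with $c$ the minimum of the $T^{2}$ Bott family of binormal geodesics and $\gamma$ the minimum of the $G_{2,4}$ Bott family of closed geodesics. After a generic small perturbation, standard Bott--Morse SFT (in the form used in similar settings) produces exactly one rigid cascade configuration: a half-trivial-strip over $c$ joined to a gradient flow line in the Bott locus that runs from the $T^{2}$-minimum into the $G_{2,4}$-minimum, capped by a trivial cylinder over $\gamma$. This cascade has trivial relative homology class, so it contributes a term $+\gamma$ with unit coefficient.

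The main obstacle is the bookkeeping of homology classes and signs under the double cover in the second step, in particular identifying which of the descended disks picks up the $Q$ (and with which sign of $p$), and verifying that the Morse--Bott cascade in the third step is cut out transversely with multiplicity exactly one. Both can be cross-checked by imposing $\partial^{2}c=0$: the required cancellations mix the contributions from the four non-$\gamma$ disks against $\partial\gamma$ (computed independently from the closed Reeb orbits in $ST^{\ast}\R P^{3}$) and against the differentials of the index $2$ chords, and these constraints should pin down every sign in the asserted formula.
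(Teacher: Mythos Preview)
Your argument for the $\gamma$ term is essentially the paper's argument: a Morse--Bott cascade from the $T^{2}$-minimum to the $G_{2,4}$-minimum gives one rigid configuration. That part is fine.

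The four-disk part has real problems. First, the differential on $CE(\Lambda_{\ell})$ counts rigid disks in the symplectization $\R\times ST^{\ast}\R P^{3}$ with boundary on $\R\times\Lambda_{\ell}$, not in the large $N$ dual $X_{\R P^{3}}$; the variable $Q$ already lives in $H_{2}(ST^{\ast}\R P^{3},\Lambda_{\ell})$ and has nothing to do with the K\"ahler class of $\C P^{1}\times\C P^{1}$. Consequently the ``degeneration $\mathcal{O}(-1)^{\oplus 2}\to\mathcal{O}(-2,-2)$'' is irrelevant to this computation. Second, your covering mechanics are off: since disks are simply connected, the natural direction is to \emph{lift} disks on $\Lambda_{\ell}$ to disks on $\Lambda_{U}$, not to descend. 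Each disk downstairs then has two lifts upstairs, so to get four disks for $\Lambda_{\ell}$ out of four disks for $\Lambda_{U}$ you would need the deck transformation to fix each unknot disk setwise, which you do not check; and in any case the statement that boundaries ``wrap by a half'' is not meaningful since boundary homology classes are integral. Third, the reference to Section~\ref{sec:linknot} for signs and framings is misplaced: that section concerns linearized contact homology of knot conormals, not the computation of $\partial c$.

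The paper sidesteps all of this by working directly on $\Lambda_{\ell}$: it draws the boundaries of the four rigid disks on the torus $\Lambda_{\ell}$ (Figure~\ref{fig:disksforline}) and reads off their homology classes in a basis $x,p$ adapted to the quotient, where $x$ is dual to $x'$ and $p$ is dual to $x'+p'$. The phrase ``corresponding to the disks of the unknot after lifting'' is a sanity check, not the mechanism of the computation. If you want to salvage the covering approach, you should lift disks from $\R P^{3}$ to $S^{3}$, analyze how the $\Z/2$ deck transformation (half-period translation in the longitude direction of $\Lambda_{U}$) acts on the four unknot disks, and then compute homology classes in $H_{1}(\Lambda_{\ell})$ using the explicit basis change, all inside the symplectization.
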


\begin{proof}
	There are again four disks corresponding to the disks of the unknot after lifting, see Figure \ref{fig:disksforline}. The disk with positive puncture at $c$ and negative at $\gamma$ corresponds to shrinking a based loop to a free loop. In the Bott-Morse picture this corresponds to holomorphic cylinder with a slit at the location of the minimum on $T^{2}$ followed by a Morse flow line to the minimum in $G_{2,4}$. A gluing argument gives exactly one such disk for generic perturbation, see Figure \ref{fig:MorseBott}. 
\end{proof}

\begin{figure}
\labellist
\small\hair 2pt
\pinlabel $x'$ at 47 0
\pinlabel $p'$ at 68 50
\endlabellist	
	\centering
	\includegraphics[width=0.3\linewidth]{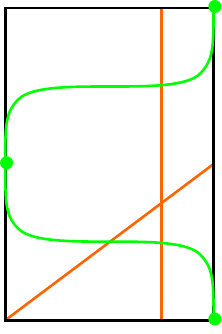}
	\caption{Rigid disks on the conormal of the line (boundaries of these disks are drawn in green). The generators of $H_1(\Lambda_\ell)$ are $x$ dual to $x'$ and $p$ dual to $x'+p'$.}
	\label{fig:disksforline}
\end{figure}

\begin{figure}
\labellist
\small\hair 2pt
\pinlabel min at 51 83
\pinlabel min at 85 10
\pinlabel $T^2$ at 106 77
\pinlabel $G_{2,4}$ at 106 4
\pinlabel ${\color{blue} \ev^+}$ at 36 76 
\pinlabel ${\color{blue} \ev^-}$ at 34 19
\endlabellist	
	\centering
	\includegraphics[width=0.5\linewidth]{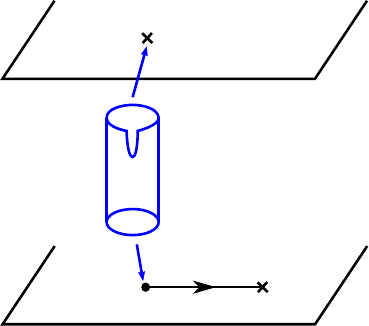}
	\caption{The disk with interior puncture from the shortest chord to the shortest orbit. There is a $T^{2}$-family of holomorphic disks with positive puncture at the chords of $\Lambda_{\ell}$. Evaluation at the boundary puncture gives a map $\ev^{+}$ into the $T^{2}$ Bott-family of chords and at the interior puncture a map $\ev^{-}$ into the $G_{2,4}$ Bott-family of orbits. The configuration that contributes to the differential of $c$ corresponds to a curve in this family with $\ev^{+}$ restricted to lie at the minimum in $T^{2}$. The map $\ev^{-}$ at this curve then gives a Reeb orbit in $G_{2,4}$ at which a flow line starts, and the flow line ends at the minimum in $G_{2,4}$.}
	\label{fig:MorseBott}
\end{figure}

\begin{rmk}
It follows as for the unknot in $S^{3}$ that the augmentation polynomial of $\ell$ is simply
\[ 
A_{\ell}(e^{x},e^{p},Q,\gamma)=
e^{x} - e^{-x} + e^{p} + Q e^{-p} + \gamma.
\]
Note that the conic bundle
\[ 
A_{\ell}(e^{x},e^{p},Q,\gamma)=uv
\]
gives a mirror to local $\C P^{1}\times\C P^{1}$.
\end{rmk}

\subsection{The annulus amplitude for the Hopf link}\label{sec:annulusforHopf}
In this subsection we compute the annulus amplitude for the conormal of the Hopf link in two different ways. Let us first describe with more details what we actually compute. Let $K=K_{1}\cup K_{2}$ denote the Hopf link. Let $L_{j}$, $j=1,2$ denote the Lagrangian conormal of $K_{j}$ in the resolved conifold $X$, and let $x_{j}$ denote the longitude class in $L_{j}$. Let 
$F_{1}(e^{x_{1}},e^{x_{2}},Q)$ denote the count of generalized holomorphic annuli with boundary on $L_{1}\cup L_{2}$. We compute the quantity
\[ 
\frac{\partial F_{1}}{\partial x_{1}\partial x_{2}}=\sum_{k_{1},k_{2},r} C_{k_{1},k_{2},r}\, Q^{r}e^{k_{1}x_{1}}e^{k_{2}x_{2}},
\]
where the sum ranges over $k_{1},k_{2}\ne 0$.
\begin{rmk}
Note that $\frac{\partial F_{1}}{\partial x_{1}\partial x_{2}}$ picks up contributions only from annuli with one boundary component on $L_{1}$ and one on $L_{2}$. Since $L_{1}\cap L_{2}=\varnothing$ it then follows that there are contributions only from formal curves $\Gamma_{\mathbf u}$ of the following form: the graph $\Gamma$ has exactly one vertex where there is a regular holomorphic annulus, there are disks at all other vertices, each disk vertex is connected to the annulus vertex with exactly one edge, and there are no other edges. This means that the count is defined with reference only to bounding chains of the rigid disks.
\end{rmk}

We consider first the count from the perspective of topological strings. The Hopf link consists of two linked unknots. The above calculation for the unknot shows that there is no contribution from large annuli. Consider the contribution to the topological string amplitude from strings stretching between two branes on the conormal of the unknot. Here only annuli contribute and the amplitude is
\[ 
\sum_{n=1}\frac{1}{n} e^{nx_{1}}e^{-nx_{2}}= \log(1-e^{x_{1}}e^{-x_{2}}).
\]
There are many ways to see this; consider for example a small shift along the closed form $d\theta$ and count the annuli arising over the unique closed geodesic in $S^{1}\times\R^{2}$, see Figure \ref{fig:annuli}. Alternatively, see \cite[Section 2.2]{AENV}. 

\begin{figure}[htp]
\labellist
\small\hair 2pt
\pinlabel $T^*S^1$ at 59 137
\pinlabel ${\color{blue} \Gamma_{d\theta}}$ at 140 90
\pinlabel $0$ at 140 53
\endlabellist
		\centering
	\includegraphics[width=0.2\linewidth]{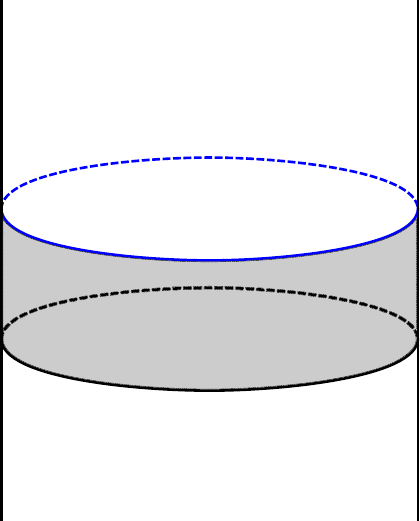}
	\caption{Annuli in $T^{\ast} S^{1}$. The annulus that is an $n$-fold cover contributes by $\frac{1}{n}$.}
	\label{fig:annuli}
\end{figure}

In order to replace the second brane with the conormal of the linked unknot we note that the shift $x_{2}$ of the curve $x_{1}$ is the meridian $p_{2}$ of the second component $K_{2}$ in the limit $K_{2}\to K_{1}$, i.e. it is the curve $x_{2}$ that is filled in by $L_{2}$.
Our annulus amplitude is then expressed as
\[ 
\frac{\partial^{2}}{\partial x_{1}\partial x_{2}} \log(1-e^{x_{1}}e^{-p_{2}(x_{2})}),
\] 
where $p_{2}(x_{2})=\frac{\partial W_{K_{2}}}{\partial x_{2}}$ parameterizes the augmentation variety $\{1-e^{x_{2}}-e^{p_{2}}+Qe^{x_{2}}e^{p_{2}}=0\}$ determined by the disk potential. Here 
\[ 
e^{p_{2}}=\frac{1-e^{x_{2}}}{1-Qe^{x_{2}}}
\]
and the amplitude is
\begin{align*}
&\frac{\partial^{2}}{\partial x_{1}\partial x_{2}}
\Bigl(\log(1-e^{x_{2}}-e^{x_{1}} +Qe^{x_{1}}e^{x_{2}})-\log(1-e^{x_{2}})\Bigr)\\
&\quad=\frac{\partial^{2}}{\partial x_{1}\partial x_{2}}
\log(1-e^{x_{2}}-e^{x_{1}} +Qe^{x_{1}}e^{x_{2}}).
\end{align*}

We consider next the recursive calculation of the same amplitude from the SFT point of view. The Reeb chords of $\Lambda_{K}$ are
\begin{itemize}
\item $a_{12}$ and $a_{21}$ of degree 0;
\item $c_{11}$, $c_{12}$, $c_{21}$, $c_{22}$, $b_{12}$, and $b_{21}$ of degree 1. 
\end{itemize}

The generator of $CE^{\rm lin}(\Lambda_{K_{1}})$ is $c_{11}$ and we depict relevant moduli spaces and amplitudes in Figure \ref{fig:amplitudes}; see Proposition~\ref{prp:HamHopf} below for a justification of these counts.

\begin{figure}[htp]
\labellist
\small\hair 2pt
\pinlabel \underline{Moduli space} at 24 180
\pinlabel \underline{Count} at 122 180
\pinlabel \underline{Moduli space} at 230 180
\pinlabel \underline{Count} at 330 180
\pinlabel $c_{11}$ at 23 158
\pinlabel $c_{11}$ at 23 74
\pinlabel $a_{12}$ at 10 0
\pinlabel $a_{21}$ at 40 0
\pinlabel $c_{21}$ at 220 158
\pinlabel $a_{12}$ at 248 158
\pinlabel $c_{21}$ at 234 74
\pinlabel $a_{21}$ at 234 0
\pinlabel $1-e^{x_1}-e^{p_1}+Qe^{x_1}e^{p_1}$ at 122 120
\pinlabel $1$ at 122 35
\pinlabel $(1-e^{x_1})e^{p_1}e^{-p_2}$ at 330 120
\pinlabel $Qe^{x_1}e^{p_1}-e^{p_1}e^{-p_2}$ at 330 35
\endlabellist
	\centering
\includegraphics[width=0.8\linewidth]{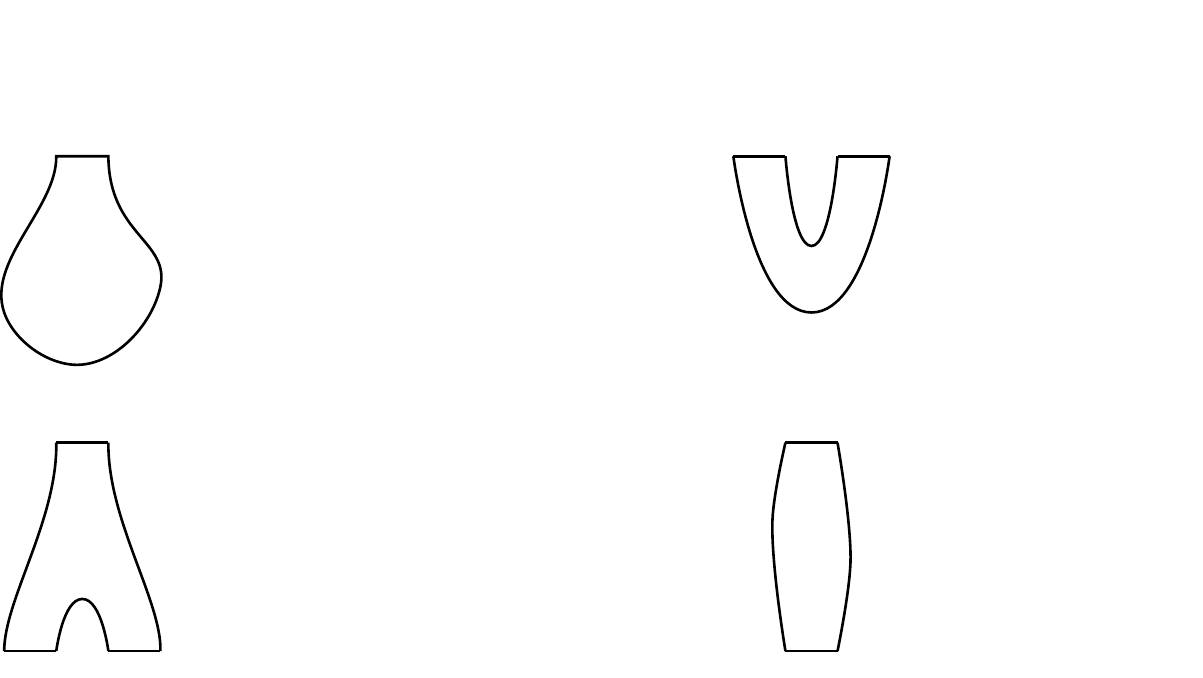}
	\caption{Moduli spaces used for computing the annulus amplitude for the Hopf link.}
	\label{fig:amplitudes}
\end{figure}

From the two rightmost counts in Figure \ref{fig:amplitudes}, the amplitude for disks with two positive punctures at $a_{12}$ and $a_{21}$ equals
\[ 
\frac{(1-e^{x_{1}})e^{-p_{2}}}{Qe^{x_{1}}-e^{-p_{2}}}.
\]

Looking now at the boundary of the 1-dimensional space of annuli with positive puncture at $c_{11}$, we find that
\[
\frac{\partial F_{1}}{\partial x_{1}}=\frac{1}{Qe^{x_{1}}e^{p_{1}}-e^{p_{1}}}
\frac{(1-e^{x_{1}})e^{-p_{2}}}{Qe^{x_{1}}-e^{-p_{2}}}
= \frac{e^{-p_{2}}}{Qe^{x_{1}}-e^{-p_{2}}}=
\frac{Q^{-1}e^{-x_{1}}e^{-p_{2}}}{1-Q^{-1}e^{-x_{1}}e^{-p_{2}}},
\]
where the second equality uses the equation for the augmentation variety of $K_{1}$.
Changing variables in the homology coefficients, $Q^{-1}e^{-x_{1}}\mapsto e^{x_{1}}$, we find that
\[ 
\frac{\partial F_{1}}{\partial x_{1}}=\frac{e^{x_{1}}e^{-p_{2}}}{1-e^{x_{1}}e^{-p_{2}}},
\]
and hence the amplitude is
\[ 
\frac{\partial^{2}}{\partial x_{1}\partial x_{2}}\log(1-e^{x_{1}}e^{-p_{2}})=
\frac{\partial^{2}}{\partial x_{1}\partial x_{2}}\log(1-e^{x_{1}}-e^{x_{2}}+Qe^{x_{1}}e^{x_{2}})
\] 
in agreement with the above.

\begin{rmk}
The change of variables reflect our choice of orientations, and a natural change of lift of this change of variables to a relative homology class, where the longitude $x$ on the unknot is capped by a half-sphere, so changing from $x$ to $-x$ gives a whole sphere with negative orientation, $Q^{-1}$.
\end{rmk}

\subsection{Legendrian SFT and recursion for the Hopf link}
\label{sec:Hopfrec}

In this subsection we will use a computation of Legendrian SFT to deduce the recursion for the Hopf link. We will find that the answer agrees with the physics computation from \cite{AENV}. Our computation can be viewed as a precursor to the more involved computation for the trefoil that occupies Section~\ref{sec:trefoil}. We note that the computation we give here is a sketch and we leave out some details; what we present is an argument for how to use Legendrian SFT to obtain a result that agrees with the known Hopf recursion.

Let $K$ denote the Hopf link, represented by a braid around the unknot $U$ with two (half) twists. Since $K$ lies very close to $U$, its Legendrian conormal $\Lambda_{K}$ lies in a small neighborhood of $\Lambda_{U}$ that can be identified with the $1$-jet space $J^{1}(\Lambda_{U})$. It is easy to check that the projection to the base in the 1-jet space gives a degree two covering map $\Lambda_{K}\to\Lambda_{U}$. 

We next determine the Reeb chords of $\Lambda_{K}$. These are of two types: short, which are entirely contained in $J^{1}(\Lambda_{U})$, and long, which are not. 
We can arrange that there are only four short Reeb chords, see \cite{EENS}, and the long chords are close to the chords of the unknot. This then results in the following Reeb chords of $\Lambda_{K}$:  
$a_{12},a_{21}$ in degree $0$; $b_{12},b_{21},c_{11},c_{12},c_{21},c_{22}$ in degree $1$; and the remainder in degree $2$. For our computation, as with computations of augmentation polynomials, it suffices to calculate the Hamiltonian of the degree $1$ Reeb chords. Each of these is a differential operator acting on the space of power series in $a_{12},a_{21}$ with coefficients in $\lambda_1,\mu_1,\lambda_2,\mu_2,Q,q$. Here we have
\[
\lambda_{j} = e^{x_{j}} \hspace{3ex}
\mu_{j} = e^{p_{j}} \hspace{3ex}
p_{j} = g_s \frac{\partial}{\partial x_{j}} \hspace{3ex}
q = e^{g_s}
\]
and $\mu_{j}\lambda_{j} = q\lambda_{j}\mu_{j}$. 

The following result lays out the parts of the Hamiltonian that we will use in our computation. If $c$ is a degree $1$ Reeb chord we write
\[ 
H(c)=\mathbf{H}^{c}
\]
for the portion of $\mathbf{H}$ corresponding to curves whose positive punctures are $c$ and some collection of degree $0$ Reeb chords (see Section~\ref{ssec:lsft}).

\begin{prp}
We have
\label{prp:HamHopf}
\begin{align*}
H(c_{11}) &= 1-\lambda_1-\mu_1+Q\lambda_1\mu_1+q^{-1} \d_{a_{12}} \d_{a_{21}} + \mathcal{O}(a) \\
H(c_{22}) &= 1-\lambda_2-\mu_2+Q\lambda_2\mu_2+q^{-1} Q \lambda_2\mu_2 \d_{a_{12}}  \d_{a_{21}} 
+ \mathcal{O}(a)\\
H(c_{12}) &= (\mu_2\mu_1^{-1}-Q\lambda_2\mu_2)\d_{a_{12}} + (1-q)(1-\lambda_2)a_{21}+ \mathcal{O}(a^2) \\
H(c_{21}) &= (q^{-1}Q \lambda_1 \mu_1 - q^{-1}\mu_1 \mu_2^{-1})\d_{a_{21}}
+ (q-1)(1-q\lambda_1)\mu_1\mu_2^{-1} a_{12} \\
&\qquad+ (q-q^{-1}) Q \lambda_1 \mu_1 a_{12} \d_{a_{12}} \d_{a_{21}}+ \mathcal{O}(a),
\end{align*}	
where $\mathcal{O}$ represents total combined order in $a_{12}$ and $a_{21}$.
\end{prp}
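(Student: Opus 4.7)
The plan is to enumerate, for each degree $1$ Reeb chord $c$, the rigid generalized holomorphic curves in $\R\times\Lambda_K$ with positive puncture at $c$ (and possibly additional positive/negative punctures at the degree $0$ mixed chords $a_{12},a_{21}$), and then to weight each such configuration using the factors from Remark~\ref{r:factorsforextrapositive} and from the linking/self-linking computations in Lemmas \ref{l:boundarysplitandcross}--\ref{l:encomplex}. As in \cite{EENS}, the projection $\Lambda_K\to\Lambda_U$ realizes $\Lambda_K$ as a $2$-fold cover sitting inside $J^{1}(\Lambda_U)$, so rigid holomorphic disks decompose into two families: the four ``big'' disks lifted from the rigid disks on the unknot (Figure~\ref{fig:unknotdisk}), and the ``small'' disks that live in the $1$-jet neighborhood and carry the mixed-chord information.

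First I would handle the pure-chord Hamiltonians $H(c_{11})$ and $H(c_{22})$. The leading terms $1-\lambda_j-\mu_j+Q\lambda_j\mu_j$ come from the four lifts of the unknot disks, exactly as in the unknot computation earlier in this section; the relative homology classes of the lifts give the monomials $1,\lambda_j,\mu_j,Q\lambda_j\mu_j$. The extra term $q^{-1}\partial_{a_{12}}\partial_{a_{21}}$ (respectively $q^{-1}Q\lambda_2\mu_2\partial_{a_{12}}\partial_{a_{21}}$) comes from disks with positive puncture at $c_{jj}$ and two negative mixed punctures, obtained by cutting a big disk open along the two sheets of the cover meeting at $a_{12}$ and $a_{21}$; the factor $q^{-1}$ records the self-linking/interior-intersection contribution associated to the extra pair of mixed punctures as dictated by the weight formula in Section~\ref{sec:sftpotential} and Remark~\ref{r:factorsforextrapositive}. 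For $c_{22}$ one gets the additional monomial $Q\lambda_2\mu_2$ because the relevant big disk on the second component has that homology class.

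Next I would compute the mixed Hamiltonians $H(c_{12})$ and $H(c_{21})$. The leading derivative terms (coefficient of $\partial_{a_{12}}$ or $\partial_{a_{21}}$) count rigid disks with one positive puncture at $c_{jk}$ and one negative mixed puncture; here there are two geometrically visible families, namely short disks in the $1$-jet neighborhood (giving the $\mu_2\mu_1^{-1}$ and $q^{-1}\mu_1\mu_2^{-1}$ terms, with the $q^{-1}$ again from a linking insertion) and big-disk lifts crossing at the mixed chord (contributing $-Q\lambda_2\mu_2$ and $q^{-1}Q\lambda_1\mu_1$). The linear-in-$a$ terms with coefficients $(1-q)(1-\lambda_2)$, $(q-1)(1-q\lambda_1)\mu_1\mu_2^{-1}$ come from rigid disks with \emph{two} positive mixed punctures and no negative mixed puncture; by Remark~\ref{r:factorsforextrapositive} each positive puncture contributes a factor of $e^{g_s/2}-e^{-g_s/2}$, and expanding this with the underlying $\Z[q^{\pm 1}]$-weight of the short disks yields the $(1-q)$ and $(q-1)$ prefactors. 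The higher-order term $(q-q^{-1})Q\lambda_1\mu_1\, a_{12}\partial_{a_{12}}\partial_{a_{21}}$ in $H(c_{21})$ arises from a big-disk lift with one extra positive and two negative mixed punctures, whose $q$-weight $(e^{g_s/2}-e^{-g_s/2})(e^{g_s/2}+e^{-g_s/2})=q-q^{-1}$ comes from one boundary-crossing gluing combined with one self-linking insertion, as in Lemma~\ref{l:boundarysplitandcross}.

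The hardest step will be justifying the precise $q$-powers and signs, and in particular showing that no additional generalized curves (multiple insertions of bounding chains, branched covers of trivial strips, or contributions from the remaining degree $1$ chords $b_{12},b_{21}$ in disguise) contribute at the orders stated. For this I would argue as follows: Lemma~\ref{l:nogenus} lets us arrange the braid presentation so that every rigid curve at infinity is rational; the obstruction-bundle argument in Section~\ref{sec:curvesatinfinity} shows that branched covers of trivial strips contribute only through the usual linking and $C_K$-intersection data; and a direct index/action computation rules out contributions involving $b_{12}$ or $b_{21}$ at the orders in $a$ that are displayed. The combinatorial identity $(e^{g_s/2}-e^{-g_s/2})^m$ for the $m$-puncture contribution from Remark~\ref{r:factorsforextrapositive}, together with the explicit short-disk counts already present in the knot contact homology differential of the Hopf link (see \cite{EENS}), then assembles into the stated formulas. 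The remaining $\mathcal{O}(a^{\bullet})$ terms correspond to configurations that do not enter the annulus recursion of Section~\ref{sec:annulusforHopf} and so can be left unanalyzed for the purposes of extracting the recursion relation.
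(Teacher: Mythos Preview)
Your overall strategy matches the paper's: enumerate rigid disks via the $2$-fold cover $\Lambda_K\to\Lambda_U$ and the Morse/flow-tree picture from \cite{EENS}, then read off homology classes and attach $q$-weights from self-linking and the extra-positive-puncture factor of Remark~\ref{r:factorsforextrapositive}. The paper's own argument is explicitly a sketch (see Remark~\ref{r:signextrapos}), so at this level of rigor your proposal is in the same spirit and largely on target.

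There is one substantive discrepancy worth flagging. For the term $(q-q^{-1})Q\lambda_1\mu_1\,a_{12}\partial_{a_{12}}\partial_{a_{21}}$ in $H(c_{21})$, you attribute the coefficient to a \emph{single} big-disk lift whose weight you factor as $(e^{g_s/2}-e^{-g_s/2})(e^{g_s/2}+e^{-g_s/2})$. The paper's mechanism is different: it identifies \emph{two} distinct generalized curves contributing to this monomial. One is a genuine disk with one extra positive puncture at $a_{12}$ and two negative punctures at $a_{12},a_{21}$, contributing $q-1$. The other is a \emph{generalized} curve consisting of a disk with a single negative puncture at $a_{21}$ together with a trivial strip over $a_{12}$ that links it; this contributes $1-q^{-1}$. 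The sum is $q-q^{-1}$. The appearance of the trivial-strip insertion is a key feature of the generalized-curve framework that your proposal does not surface, and your factorization $(q^{1/2}-q^{-1/2})(q^{1/2}+q^{-1/2})$ does not correspond to any mechanism in Lemmas~\ref{l:boundarysplitandcross}--\ref{l:encomplex} (both of which produce the \emph{difference} $e^{g_s/2}-e^{-g_s/2}$, not the sum).

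A smaller point: the $q^{-1}$ in front of $\partial_{a_{12}}\partial_{a_{21}}$ in $H(c_{11})$ and $H(c_{22})$ is, in the paper, traced to two local $q^{-1/2}$ contributions near the negative-puncture endpoints (via the capping-path analysis of Lemma~\ref{l:capping}), rather than to a self-linking/interior-intersection count as you suggest. Similarly, the disks contributing the $\partial_{a_{12}}$ and $\partial_{a_{21}}$ terms in $H(c_{12})$ and $H(c_{21})$ are big disks with flow lines attached, not ``short disks in the $1$-jet neighborhood.'' These are minor mislocalizations of the $q$-bookkeeping rather than errors of strategy, but since the whole content of the proposition is in the $q$-powers, they matter.
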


\begin{figure}[htp]
\labellist
\small\hair 2pt
\pinlabel ${\color{green} c_{\ast 1}^-}$ at -20 692
\pinlabel ${\color{green} c_{\ast 1}^-}$ at 515 692
\pinlabel ${\color{green} c_{1\ast}^+}$ at 249 889
\pinlabel ${\color{green} c_{1\ast}^+}$ at 249 490
\pinlabel ${\color{green} c_{2 \ast}^+}$ at 249 428
\pinlabel ${\color{green} c_{2\ast}^+}$ at 249 30
\pinlabel ${\color{green} c_{\ast 2}^-}$ at -20 228
\pinlabel ${\color{green} c_{\ast 2}^-}$ at 515 228
\pinlabel ${\color{blue} a_{12}^+}$ at 405 797
\pinlabel ${\color{blue} b_{12}^+}$ at 454 826
\pinlabel ${\color{blue} a_{21}^-}$ at 405 560
\pinlabel ${\color{blue} b_{21}^-}$ at 454 555
\pinlabel ${\color{blue} a_{12}^-}$ at 405 337
\pinlabel ${\color{blue} b_{12}^-}$ at 454 367
\pinlabel ${\color{blue} a_{21}^+}$ at 405 94
\pinlabel ${\color{blue} b_{21}^+}$ at 454 92
\endlabellist	
\centering
	\includegraphics[width=0.9\linewidth]{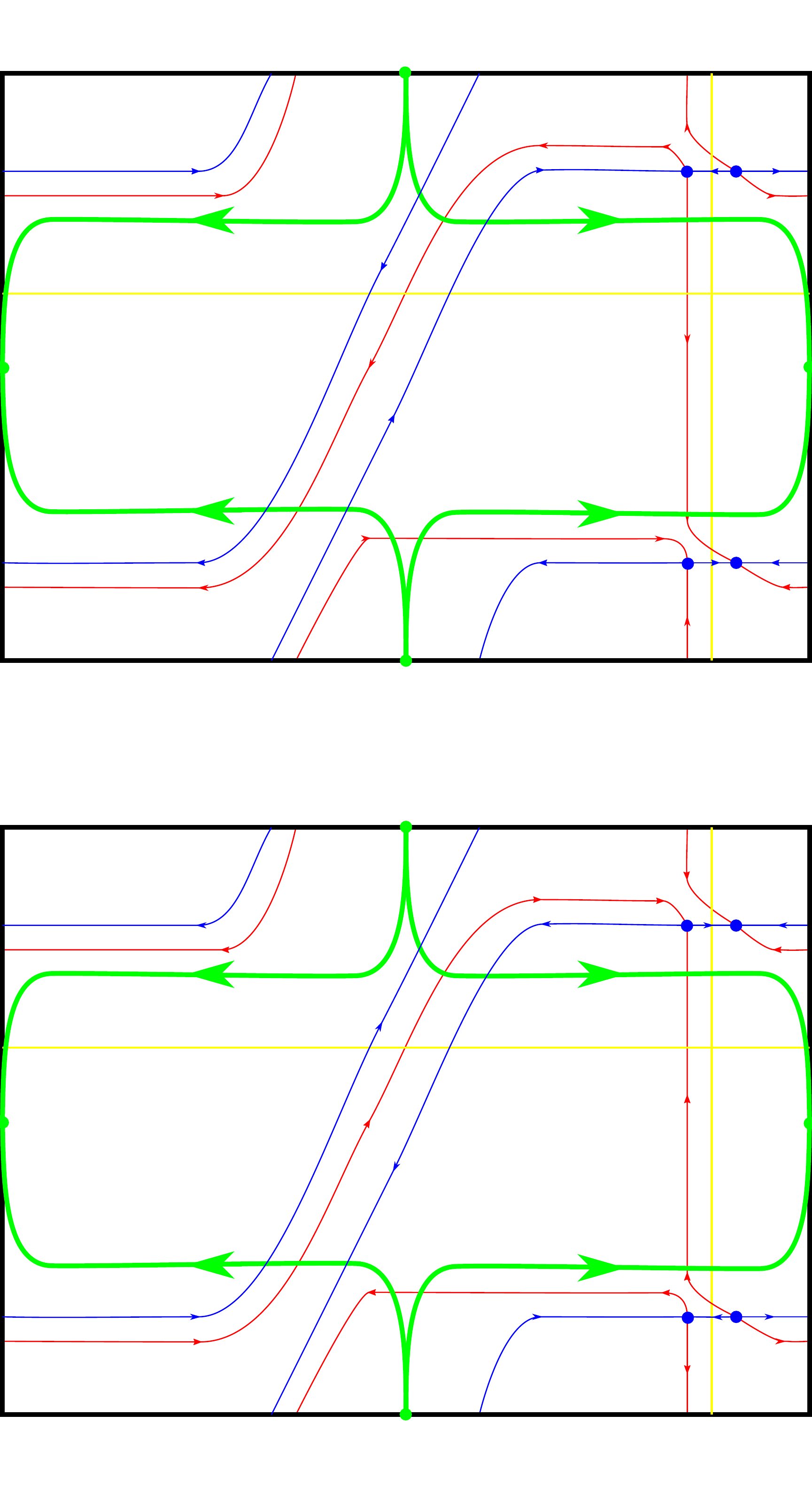}
	\caption{Flow lines and big disk boundaries for the Hopf link.}
	\label{fig:Hopfflow}
\end{figure}

\begin{rmk}
\label{rmk:Hopf}
The proof of Proposition \ref{prp:HamHopf} depends on Figure \ref{fig:Hopfflow}, which lays out the geometry of the terms contributing to the Hamiltonian for the Hopf link. Before we proceed to the proof, we describe what is depicted in this figure.	
The two squares (with opposite sides identified) are the Legendrian tori $\Lambda_{K_1}$ and $\Lambda_{K_2}$. The green dots are the endpoints of the Reeb chords $c_{ij}$, labeled by $+$ and $-$ depending on whether they are the positive or negative endpoint; for instance, $c_{1\ast}^+$ denotes the positive endpoint of (both) the Reeb chords $c_{11}$ and $c_{12}$.
The blue dots are the endpoints of the Reeb chords $b_{ij}$ and $a_{ij}$, as labeled. 
 The green curves are the boundaries of big disks. The yellow curves are reference curves; intersection numbers with the vertical (resp.\ horizontal) curves give the exponents of $e^{x_i}$ (resp.\ $e^{p_i}$) associated to each holomorphic curve.
 
The blue flow lines correspond to boundaries of disks with a negative puncture at $a_{ij}$ (stable manifold for the negative gradient flow of the positive function difference) and the red flow lines to boundaries of disks with a positive puncture at $a_{ij}$ (unstable manifold for the negative gradient flow of the positive function difference). As explained in \cite{EENS}, the stable manifolds follow the vector joining the two components of the braid; since the Hopf link consists of one full twist on two strands, the stable manifolds wind one full time around the meridional direction of the Legendrian tori.
For the unstable manifolds, note that there are flows between the endpoints for Reeb chords $a_{ij}$ and $b_{ij}$ that roughly follow the (vertical) fiber direction.

Figure \ref{fig:Hopfflow} gives rise to generalized holomorphic curves as listed in Figures \ref{fig:HHopf11and22}, \ref{fig:HHopf12}, and \ref{fig:HHopf21}. The curves drawn in these figures are the boundaries of the holomorphic curves that arise from big disks along with flow lines from Figure \ref{fig:Hopfflow}. Self intersections of the boundaries correspond to linking numbers and contribute factors $q^{\frac12\slk}$ to the corresponding curve count, see Section \ref{sec:sftpotential}. Note that there are also contributions from intersections with capping paths. In order not to clutter the pictures too much we have drawn the capping paths (shown in magenta) only when there are such contributions and left them out otherwise.

We also recall from Remark~\ref{r:factorsforextrapositive} that we count disks with two positive punctures in general with a factor of $\pm e^{rg_s}(e^{\frac12 g_s}-e{-\frac12 g_s})^m$. For our computation here we have $m=1$ (this counts the additional positive puncture beyond the input for $H$) and $r=\frac12\slk$, and $\slk = \pm 1$ for the disks that we will consider, for an overall factor of $\pm (q-1)$ or $\pm (1-q^{-1})$.
\end{rmk}

\begin{rmk}\label{r:signextrapos}
We emphasize that the proof of Proposition~\ref{prp:HamHopf} is a sketch, with some details omitted. In particular, exact $q$-contributions are sometimes hard to see directly from pictures, since they depend on choices of capping disks and how these intersect $C_{K}$. Such choices affect terms in the Hamiltonian but after basic choices are made other terms are determined. We plan to study particular choices suitable for algorithmic computations elsewhere. Also, we do not justify the overall signs of some terms; in particular, the disks with two positive punctures contribute (a power of $q$ times) $\pm (q-1)$ but we do not justify the individual choices of $\pm$.
\end{rmk}

\begin{proof}[Sketch of proof of Proposition~\ref{prp:HamHopf}]
Flow trees and big disk boundaries for the Hopf link are depicted in Figure \ref{fig:Hopfflow}.
We first note that the disks without negative punctures in $H(c_{11})$ and $H(c_{22})$ are the disks of the unknot components. Taking capping disks exactly as for the unknot components, we find that these disks contribute exactly as for the unknot. We therefore concentrate on the other disks. The subtlest point is determining the powers of $q$ attached to each term, and we focus on this.

For $H(c_{11})$ and $H(c_{22})$, there is one additional disk apiece, depicted in Figure \ref{fig:HHopf11and22}. The disk on the left corresponds to the $\partial_{a_{12}} \partial_{a_{21}}$ term in $H(c_{11})$, while the disk on the right corresponds to the $\lambda_2\mu_2\partial_{a_{12}} \partial_{a_{21}}$ term in $H(c_{22})$. In both cases there are two crossings with capping paths which cancel, leaving no contribution to powers of $q$ from intersections with capping paths. However, each disk has two contributions of $q^{-\frac12}$ near the endpoints of $a_{12}$ and $a_{21}$, compare the proof of Lemma \ref{l:capping}, and this results in an overall factor $q^{-1}$ for both disks.

\begin{figure}
\labellist
\small\hair 2pt
\pinlabel $\partial_{a_{12}}\partial_{a_{21}}$ at 72 0
\pinlabel $Q\lambda_2\mu_2\partial_{a_{12}}\partial_{a_{21}}$ at 278 0
\endlabellist
	\centering
	\includegraphics[width=0.6\linewidth]{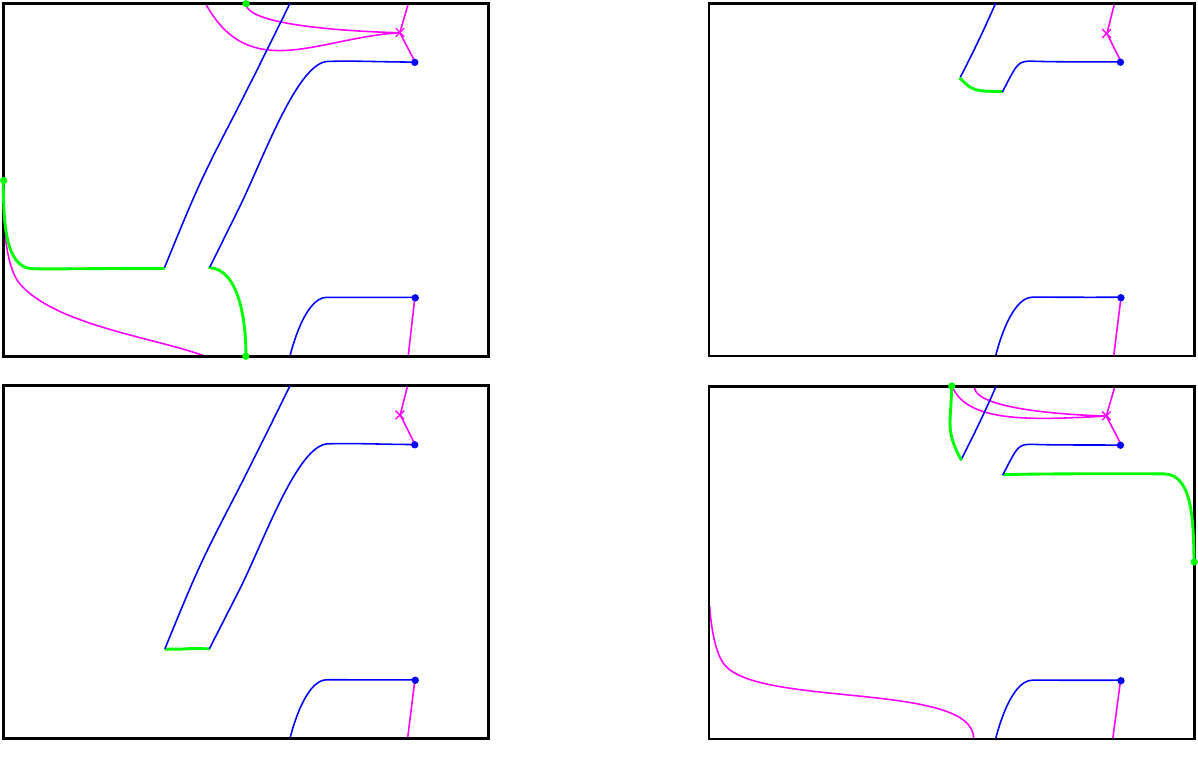}
	\caption{
	Disks for the terms $\partial_{a_{12}} \partial_{a_{21}}$ in $H(c_{11})$ (left) and $\lambda_2\mu_2\partial_{a_{12}} \partial_{a_{21}}$ in $H(c_{22})$ (right).	}
	\label{fig:HHopf11and22}
\end{figure} 

\begin{figure}
\labellist
\small\hair 2pt
\pinlabel  $\mu_2\mu_1^{-1}\partial_{a_{12}}$ at 72 246
\pinlabel $Q\lambda_2\mu_2\partial_{a_{12}}$ at 258 246
\pinlabel $a_{21}$ at 72 0
\pinlabel $\lambda_2a_{21}$ at 258 0
\endlabellist	
\centering
	\includegraphics[width=0.6\linewidth]{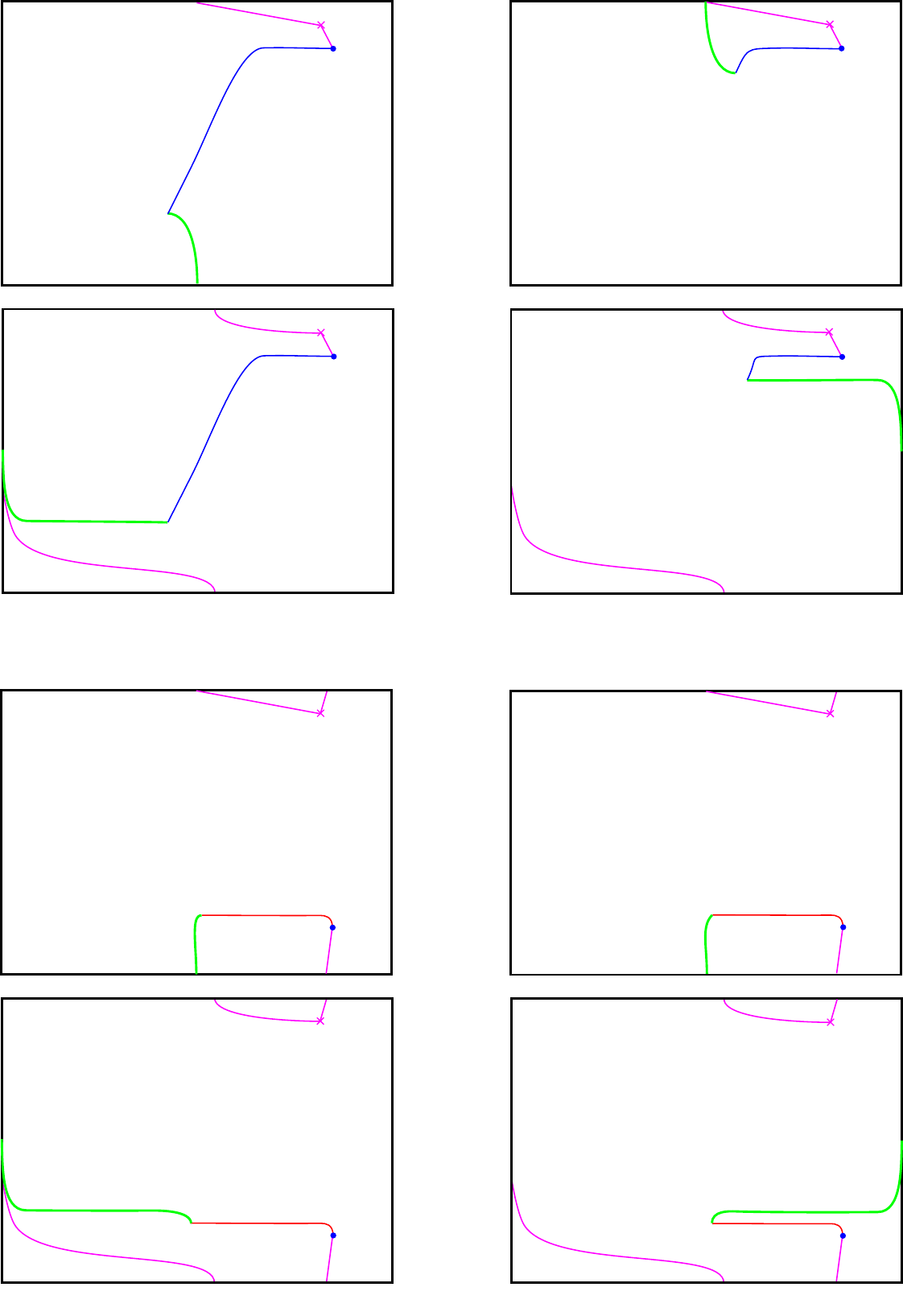}
	\caption{Disks for $H(c_{12})$.
}
	\label{fig:HHopf12}
\end{figure}

The disks for $H(c_{12})$ are shown in Figure \ref{fig:HHopf12}; the two disks with one positive puncture are to the left and the two disks with two positive punctures are to the right. There are no crossings with capping paths. The curves with two positive punctures both come with the factor $1-q$. (Comparing to the calculations of $H(c_{11})$ and $H(c_{22})$, the contribution $q^{-\frac12}$ at the negative puncture $a_{12}$ can be considered an overall factor and the factor of the disks with two positive punctures becomes $q^{\frac12}(q^{\frac12}-q^{-\frac12})$, see Remark \ref{r:factorsforextrapositive}.)

\begin{figure}[htp]
\labellist
\small\hair 2pt
\pinlabel $Q\lambda_1\mu_1\partial_{a_{21}}$ at 256 914
\pinlabel $\mu_1\mu_2^{-1}\partial_{a_{21}}$ at 908 914
\pinlabel $\mu_1\mu_2^{-1}a_{12}$ at 1546 914
\pinlabel $\lambda_1\mu_1\mu_2^{-1}a_{12}$ at 256 46
\pinlabel $Q\lambda_1\mu_1a_{12}\partial_{a_{12}}\partial_{a_{21}}$ at 908 46 
\pinlabel $Q\lambda_1\mu_1a_{12}\partial_{a_{12}}\partial_{a_{21}}$ at 1546 46
\endlabellist
	\centering
	\includegraphics[width=\linewidth]{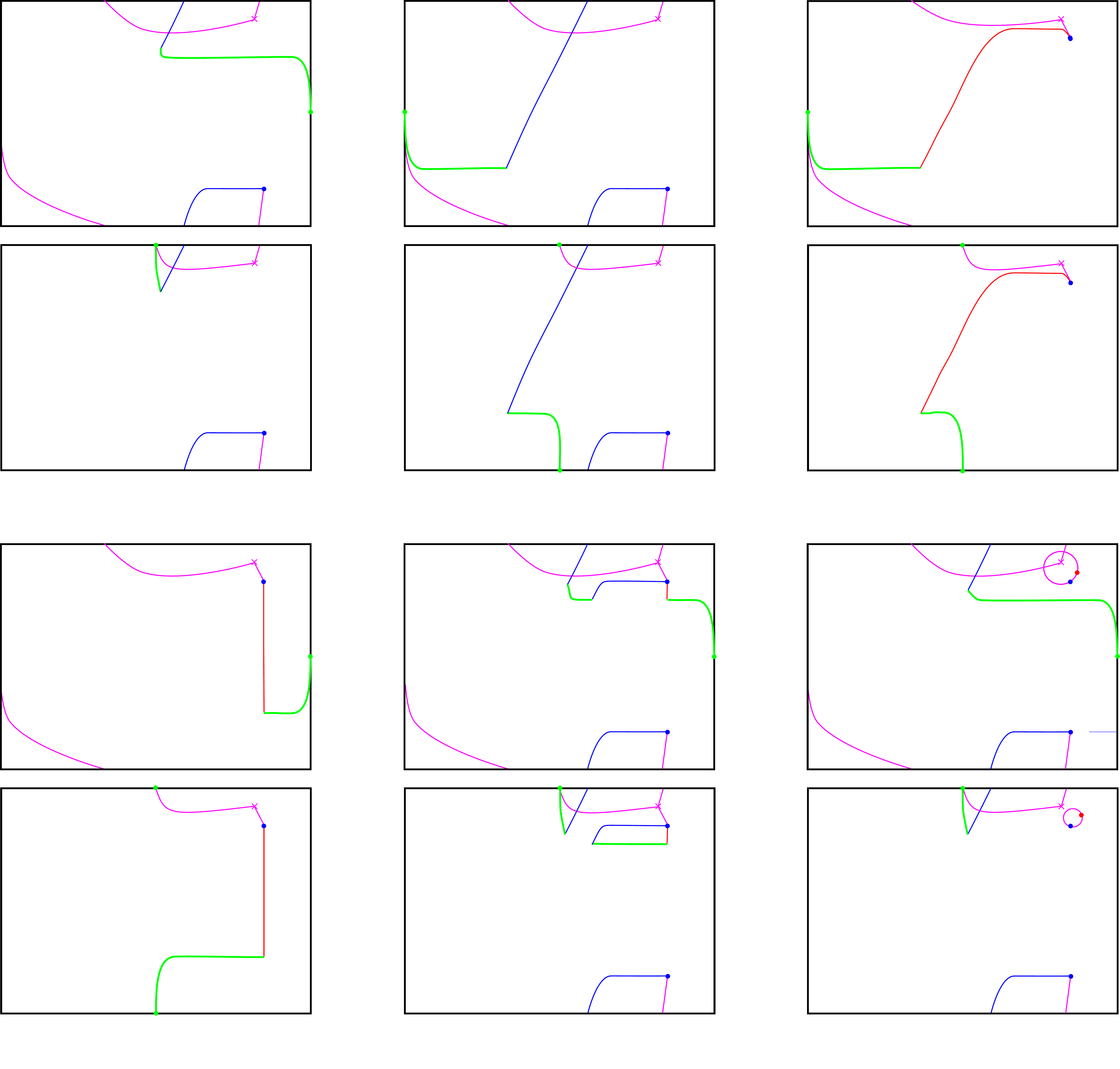}
	\caption{Disks for $H(c_{21})$.}
	\label{fig:HHopf21}
\end{figure}
Finally, consider $H(c_{21})$. The $6$ curves that contribute are depicted in Figure \ref{fig:HHopf21}. 
There are two disks with one positive puncture, corresponding to terms $Q\lambda_1\mu_1\partial_{a_{21}}$ and $\mu_1\mu_2^{-1}\partial_{a_{21}}$. For each of these, the crossings together with the contribution from the negative puncture at $a_{21}$ give coefficients $q^{-1}$. The remaining disks have two positive punctures. There are two disks with degree $0$ positive puncture at $a_{12}$, contributing $\mu_1\mu_2^{-1}a_{12}$ and $\lambda_1\mu_1\mu_2^{-1}a_{12}$. The former comes with coefficient $q-1$ as in the calculation of $H(c_{12})$. Comparing the latter disk to the former disk, we find that near the punctures the disk boundary enters from the opposite directions, and in combination with capping paths this adds a factor $q^{1}$; thus the $\lambda_1\mu_1\mu_2^{-1}a_{12}$ disk comes with total coefficient $q(1-q)$.

The two remaining contributions to $H(c_{21})$ both correspond to terms $Q\lambda_1\mu_1a_{12}\partial_{a_{12}}\partial_{a_{21}}$. Of these two, the left one in Figure~\ref{fig:HHopf21} is a single disk with one extra positive puncture at $a_{12}$ and two negative punctures at $a_{12}$ and $a_{21}$, while the right one is a disk with a negative puncture at $a_{21}$ linked with a trivial strip over $a_{12}$. 
Both disks have two crossings with capping paths, contributing a total of $q$. In the first disk there are two negative punctures and one positive with a total capping contribution $q^{\frac12-\frac12-\frac12}$; overall the first disk comes with coefficient $q^{\frac12}(q^{\frac12}-q^{-\frac12})=q-1$. 
The second disk is obtained by inserting a trivial strip that links the disk into which it is inserted with linking number $-1$. 
The lower right picture in Figure~\ref{fig:HHopf21} shows the boundary of the trivial strip as two dots connected by the shorter arc in the circle; the longer arcs in the circle represent capping paths of the trivial strip, which overall link with the capping path of the big disk once. 
The total capping contribution from the trivial negative punctures is $q^{-1}$, the perturbation chosen then gives no capping contribution at the positive puncture, and overall the total coefficient for the second disk is $qq^{-1}(1-q^{-1})=1-q^{-1}$. When we add the contributions from the first and second disks, we conclude that $Q\lambda_1\mu_1a_{12}\partial_{a_{12}}\partial_{a_{21}}$ appears with coefficient $q-q^{-1}$.
\end{proof}

We now use Proposition~\ref{prp:HamHopf} to calculate the recursion for the Hopf link. The recursion consists of not a single relation, as for a single-component knot, but several relations: in this case, three relations $A,B,C$ in $\lambda_1, \mu_1, \lambda_2, \mu_2, Q, q$. Recall that these coefficients commute except that $\mu_1\lambda_1 = q\lambda_1\mu_1$ and $\mu_2\lambda_2 = q\lambda_2\mu_2$.

The relations are obtained by eliminating $\d_{a_{12}}$ and $\d_{a_{21}}$ in the relations $H(c_{11}), H(c_{22}), H(c_{12}), H(c_{21})$ given by the Hamiltonian in Proposition~\ref{prp:HamHopf}. For instance, a linear combination of $H(c_{11})$ and $H(c_{22})$ will cancel the $\d_{a_{12}} \d_{a_{21}}$ terms in each: define
\begin{align*}
A &:= H(c_{11}) - q Q^{-1}\lambda_2^{-1}\mu_2^{-1}H(c_{22}) \\
&= -\lambda_1+q Q^{-1}\lambda_2^{-1}-(1-Q\lambda_1)\mu_1+(1-q\lambda_2^{-1})Q^{-1}\mu_2^{-1} + \mathcal{O}(a).
\end{align*}
Next note that
\[
\d_{a_{21}} H(c_{12}) = (1-q)(1-\lambda_2)+(\mu_2\mu_1^{-1}-Q\lambda_2\mu_2)\d_{a_{12}}
\d_{a_{21}} + \mathcal{O}(a)
\]
and then define
\begin{align*}
B &:= (\mu_1^{-1}-Q\lambda_2) H(c_{22}) - q^{-1}Q\lambda_2 \d_{a_{21}} H(c_{12}) \\
&= (\mu_1^{-1}-Q\lambda_2)(1-\lambda_2-\mu_2+Q \lambda_2\mu_2)-(q^{-1}-1)Q\lambda_2(1-\lambda_2)
+\mathcal{O}(a) \\
&= (1-\lambda_2-\mu_2+Q\lambda_2\mu_2)(\mu_1^{-1}-q^{-1}Q\lambda_2) + \mathcal{O}(a).
\end{align*}
Similarly define
\begin{align*}
C &:= (q Q \lambda_1-\mu_2^{-1})H(c_{11})-\mu_1^{-1}\d_{a_{12}} H(c_{21}) \\
&= (q-q \lambda_1-\mu_1+Q \lambda_1\mu_1)(Q\lambda_1-\mu_2^{-1}) + \mathcal{O}(a).
\end{align*}

We drop the $\mathcal{O}(a)$ terms in $A,B,C$ to obtain the recurrence relations for the Hopf link:
\begin{align*}
A&=-\lambda_1+q Q^{-1}\lambda_2^{-1}-(1-Q\lambda_1)\mu_1+(1-q\lambda_2^{-1})Q^{-1}\mu_2^{-1}
\\
B &= (1-\lambda_2-\mu_2+Q\lambda_2\mu_2)(\mu_1^{-1}-q^{-1}Q\lambda_2)\\
C &= (q-q \lambda_1-\mu_1+Q \lambda_1\mu_1)(Q\lambda_1-\mu_2^{-1}) .
\end{align*}

In \cite[Section 4.5]{AENV}, the recurrence relations for the Hopf link were calculated using techniques from Chern--Simons and topological string theory, with the following result:
\begin{align*}
\mathcal{A}_1 &= -\lambda_1+\lambda_2-(1-Q\lambda_1)\mu_1+(1-Q\lambda_2)\mu_2 \\
\mathcal{A}_2 &= (1-q^{-1}\lambda_2-(1-Q\lambda_2)\mu_2)(\mu_1-\lambda_2) \\
\mathcal{A}_3 &= (1-q^{-1}\lambda_1-(1-Q\lambda_1)\mu_1)(\mu_2-\lambda_1).
\end{align*}
We now check that our relations agree with the ones from \cite{AENV} up to a change of coordinates. Indeed, replace
\[
\lambda_2 \mapsto Q^{-1}\lambda_2^{-1}, \qquad \mu_2 \mapsto Q^{-1}\mu_2^{-1}
\]
in the expressions for $A,B,C$, to obtain
\begin{align*}
A&=-\lambda_1+q\lambda_2-(1-Q\lambda_1)\mu_1+(1-qQ\lambda_2)\mu_2
\\
B &= -\mu_1^{-1}\lambda_2^{-1}(1-q\lambda_2-q^{-1}\mu_2+Q\lambda_2\mu_2)(q^{-1}\mu_1-\lambda_2) \\
C &= Q(q-q \lambda_1-\mu_1+Q \lambda_1\mu_1)(\lambda_1-\mu_2) .
\end{align*}
If we drop the units $-\mu_1^{-1}\lambda_2^{-1},Q$ in front of $B,C$, change variables
\[
\lambda_1 \mapsto q \lambda_1, \qquad 
\mu_1 \mapsto q \mu_1, \qquad \lambda_2 \mapsto \lambda_2,
\qquad \mu_2 \mapsto q \mu_2, \qquad Q \mapsto q^{-1}Q,
\]
and finally replace $q$ by $q^{-1}$, then $A,B,C$ precisely become $q^{-1}\cdot \mathcal{A}_1,\mathcal{A}_2,q^{-2}\cdot\mathcal{A}_3$ respectively. This verifies that our relations are in agreement with \cite{AENV}.

\section{Legendrian SFT and recursion for the trefoil}
\label{sec:trefoil}

In this section we present an example of a more involved computation. We use Legendrian SFT to produce a $q$-deformed version of the augmentation polynomial for the right-handed trefoil, and then check that this agrees with the recurrence relation for the colored HOMFLY-PT polynomials for the trefoil. 

As in Section~\ref{sec:Hopfrec}, we need to include a disclaimer here. The formulas we write for the Hamiltonian in Legendrian SFT depend on a putative choice of geometric additional data---for example, choices of capping paths and perturbations of coincident flow lines can affect formulas---and we do not carefully justify this part of the computation leading to our formulas. In addition, we do not justify all of the signs of terms in the Hamiltonian; some of them come directly from Legendrian contact homology and are determined by the work in \cite{EENS}, but others are assigned without proof.
Finally, some parts of the calculations 
depend on our conjectures, see e.g.~Remark \ref{r:factorsforextrapositive}. 
Rather than a rigorous derivation, this section should be viewed as a first step towards a combinatorial description of the Legendrian SFT Hamiltonian and a sketch of how to use it to extract the recursion relation.    

\subsection{Hamiltonian for the trefoil}
Let $T$ denote the right-handed trefoil, represented by a braid around the unknot $U$ with three (half) twists. We view this as with the Hopf link in Section~\ref{sec:Hopfrec}: place $\Lambda_T$ in a neighborhood $J^1(\Lambda_U)$ of $\Lambda_U$. 
Exactly as for the Hopf link, $\Lambda_T$ has the following Reeb chords: $a_{12},a_{21}$ in degree $0$; $b_{12},b_{21},c_{11},c_{12},c_{21},c_{22}$ in degree $1$; and the remainder in degree $2$. The coefficients of the Hamiltonian are slightly simpler than for the Hopf link since the trefoil has a single component: they are $\lambda,\mu,Q,q$, where
\[
\lambda = e^x \hspace{3ex}
\mu = e^p \hspace{3ex}
p = g_s \frac{d}{dx} \hspace{3ex}
q = e^{g_s}
\]
and $\mu\lambda = q\lambda\mu$. 

The following result lays out the parts of the Hamiltonian that we will use in our computation. As for the Hopf link, we will only need the Hamiltonian for degree $1$ Reeb chords; if $c$ is a degree $1$ Reeb chord we write
\[ 
H(c)=\mathbf{H}^{c}.
\]

\begin{proposition}
We have \label{prop:trefoilHam}
\begin{align*}
H(b_{12}) &= \lambda^{-1} \d_{a_{12}} - \d_{a_{21}} +\mathcal{O}(a)\\
H(c_{11}) &= 
\lambda\mu - q^{-1}\lambda -((1+q^{-1})Q-\mu) \d_{a_{12}} - Q \d^2_{a_{12}} \d_{a_{21}} + \mathcal{O}(a) \\
H(c_{21}) &= Q - \mu + \lambda\mu \d_{a_{21}} + Q \d_{a_{12}} \d_{a_{21}} +  (q^{-1}-1) \lambda a_{12}  \\
& \qquad+ (q^{-1}-1) Q a_{12} \d_{a_{12}}
+ \mathcal{O}(a^2) \\
H(c_{22}) &= \mu-1-Q \d_{a_{21}}+\mu \d_{a_{12}} \d_{a_{21}} +(q-1) Q a_{12} \\
&\qquad + (q-1) \mu a_{12} \d_{a_{12}} + \mathcal{O}(a^2),
\end{align*}
where $\mathcal{O}$ represents total combined order in $a_{12}$ and $a_{21}$.
\end{proposition}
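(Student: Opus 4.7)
The plan is to follow the same template as for the Hopf link in Proposition~\ref{prp:HamHopf}. We represent $T$ as a $3$-braid around the unknot and place $\Lambda_T$ in a $1$-jet neighborhood of $\Lambda_U$, so that the boundaries of rigid generalized holomorphic curves in the symplectization of $ST^\ast S^3$ can be visualized on $\Lambda_T$ as unions of lifted big-disk boundaries of $\Lambda_U$ together with Morse flow trees between the three strands of the braid (stable and unstable manifolds of the positive height differences). The first step will be to produce the trefoil analog of Figure~\ref{fig:Hopfflow}: list the positive and negative endpoints of $a_{12},a_{21},b_{12},b_{21},c_{ij}$ on the Legendrian torus, draw the flow lines (which now wind $3/2$ times around the meridian instead of once since we have three half-twists), and fix reference curves so that intersection numbers with them read off the exponents of $\lambda$ and $\mu$ for each generalized curve.

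Next, for each degree $1$ Reeb chord $c\in\{b_{12},c_{11},c_{21},c_{22}\}$ we would enumerate the boundaries of all rigid generalized holomorphic curves having $c$ as one positive puncture, any further positive punctures among $\{a_{12},a_{21}\}$, and negative punctures recorded by operators $\d_{a_{12}},\d_{a_{21}}$. This is a finite combinatorial task: each such curve is a big disk of $\Lambda_U$ attached by flow trees to the endpoints on $\Lambda_T$, possibly with a trivial Reeb chord strip linked in. From each picture one reads (i) the relative homology class giving the $\lambda^i\mu^j Q^r$ factor, (ii) the number and signs of crossings of the boundary with the fixed capping paths, and (iii) the self-linking of the boundary. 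One then assembles the $q$-power using the rule from the proof of Lemma~\ref{l:capping} that each capping-path crossing at a puncture contributes $q^{\pm 1/2}$, together with the convention from Remark~\ref{r:factorsforextrapositive} that a rational curve with $m$ auxiliary positive punctures is weighted by $\pm e^{rg_s}(e^{g_s/2}-e^{-g_s/2})^m$. The terms with no $a$ and those linear in $\d_{a_{ij}}$ arise from disks with a single positive puncture; the $\d_{a_{12}}^{2}\d_{a_{21}}$ and $\d_{a_{12}}\d_{a_{21}}$ terms come from disks with several negative punctures and one positive; and the $a_{12}$ and $a_{12}\d_{a_{12}}$ terms arise from disks with an extra positive puncture at $a_{12}$ in combination (in the second case) with a linked trivial strip.

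A useful consistency check at each stage is to set $q=1$: the coefficients of $\d_{a_{12}}$ and $\d_{a_{21}}$ in $H(b_{12}),H(c_{11}),H(c_{21}),H(c_{22})$ should then reproduce the matrix entries of the classical Chekanov--Eliashberg differential of the trefoil as computed from the $3$-braid presentation in \cite{EENS}, while the $a_{12}$ and $a_{12}\d_{a_{12}}$ terms should vanish. This both cross-checks the enumeration of curves and pins down most overall signs directly from the classical answer.

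The main obstacle will be justifying the exact $q$-powers of the individual terms, since, as noted in Remark~\ref{r:signextrapos}, these depend on the choice of capping disks and of the perturbation scheme (in particular on how capping disks intersect the $4$-chain $C_T$). The most delicate cases are the two-positive-puncture contributions, each of which enters as $\pm(q-1)$ or $\pm(1-q^{-1})$; deciding between these two forms for a given disk requires tracking whether the boundary approaches the extra positive puncture in the same or the opposite direction as the capping path and the sign of the associated self-linking contribution, exactly as in the analysis of the $a_{12}$-type disks in $H(c_{21})$ for the Hopf link. Rather than fix these signs by a general principle, the plan is to determine them case by case, and to validate the resulting formulas a posteriori by verifying that elimination of $\d_{a_{12}}$ and $\d_{a_{21}}$ from the system $\{H(b_{12}),H(c_{11}),H(c_{21}),H(c_{22})\}$ yields, after an appropriate change of variables, the known recursion relation for the colored HOMFLY-PT polynomial of the right-handed trefoil, as will be carried out in the remainder of Section~\ref{sec:trefoil}.
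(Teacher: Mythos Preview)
Your proposal follows essentially the same approach as the paper's own sketch: place $\Lambda_T$ in $J^1(\Lambda_U)$, draw the analogue of Figure~\ref{fig:Hopfflow} with flow lines winding $3/2$ times around the meridian, enumerate the rigid disks for each degree~$1$ chord, and determine the $q$-powers from crossings with capping paths and self-linking, treating the two-positive-puncture contributions via Remark~\ref{r:factorsforextrapositive}. The paper also uses the $q=1$ limit as a consistency check against the classical dg-algebra differential, and likewise acknowledges that the exact $q$-powers depend on choices that are not fully justified.

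One point to correct: you write ``$3$-braid'' and ``three strands of the braid'', but the trefoil here is the closure of $\sigma_1^3$, a \emph{two}-strand braid with three half-twists. The paper explicitly notes that $\Lambda_T$ is a degree-$2$ cover of $\Lambda_U$ with two sheets and a single function difference. Your Reeb chord list and the $3/2$-winding remark are already consistent with the two-strand picture, so this appears to be a slip rather than a structural error, but it should be fixed since a genuine three-strand braid would produce additional degree~$0$ chords (e.g.\ $a_{13},a_{23},\dots$) and a different Hamiltonian.
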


\begin{proof}[Sketch of proof]
We prove this proposition by reducing the holomorphic curve count to combinatorics. We use the strategy recalled in Section \ref{sec:curvesatinfinity}: consider the limit $T\to U$ in which the braid of $T$ approaches the unknot. In this limit, as mentioned above, we can view $\Lambda_T$ as a subset of $J^1(\Lambda_U)$,
where a neighborhood of the 0-section in $J^{1}(\Lambda_{U})$ is identified with a neighborhood of $\Lambda_{U}$ in $ST^{\ast}S^{3}$. Since $T$ is a $2$-fold cover of $U$, $\Lambda_{T}$ is graphical over $\Lambda_{U}$ with two sheets, and 
there is only one function difference between the two sheets of $\Lambda_{T}$. This function determines a Morse flow along $\Lambda_{U}$. As explained in Section \ref{sec:curvesatinfinity}, the generalized holomorphic curves of $\Lambda_{T}$ can then be determined from knowledge of the disks of $\Lambda_{U}$ this Morse flow determined by the degree $2$ Legendrian $\Lambda_{T}\subset J^{1}\Lambda_{U}$.   

To count the generalized holomorphic curves via flow trees, we refer to Figure~\ref{fig:trefoilflow}. This shows the torus $\Lambda_T$ (the full rectangle, with opposite sides identified) as a $2$-fold cover of $\Lambda_U$ (the two smaller rectangles). The yellow curves are reference curves, where intersections with the vertical (resp.\ horizontal) curve count powers of $\lambda$ (resp.\ $\mu$). As in the calculation for the Hopf link (see Remark~\ref{rmk:Hopf}), the green and blue dots represent endpoints of Reeb chords as labeled. Also as in Remark~\ref{rmk:Hopf}, the blue and red curves are flow lines for the function difference between the two sheets, beginning and ending at endpoints of Reeb chords $a_{12},a_{21},b_{12},b_{21}$, which are critical points of the function difference. Note that the long flow lines between $b$ and $a$ critical points wind $\frac{3}{2}$ times around the meridional direction, corresponding to the $3$ half twists of the trefoil $T$.

\begin{figure}
\labellist
\small\hair 2pt
\pinlabel ${\color{green} c_{\ast 1}^-}$ at 156 60
\pinlabel ${\color{green} c_{1\ast}^+}$ at 78 118
\pinlabel ${\color{green} c_{1\ast}^+}$ at 78 2
\pinlabel ${\color{green} c_{2 \ast}^+}$ at 218 118
\pinlabel ${\color{green} c_{2\ast}^+}$ at 218 2
\pinlabel ${\color{green} c_{\ast 2}^-}$ at 0 60
\pinlabel ${\color{green} c_{\ast 2}^-}$ at 295 60
\pinlabel ${\color{blue} a_{12}^+}$ at 122 89
\pinlabel ${\color{blue} b_{12}^+}$ at 137 100
\pinlabel ${\color{blue} a_{21}^-}$ at 123 21
\pinlabel ${\color{blue} b_{21}^-}$ at 137 30
\pinlabel ${\color{blue} a_{12}^-}$ at 260 89
\pinlabel ${\color{blue} b_{12}^-}$ at 275 100
\pinlabel ${\color{blue} a_{21}^+}$ at 260 21
\pinlabel ${\color{blue} b_{21}^+}$ at 275 30
\endlabellist
	\centering
	\includegraphics[width=1.2\linewidth]{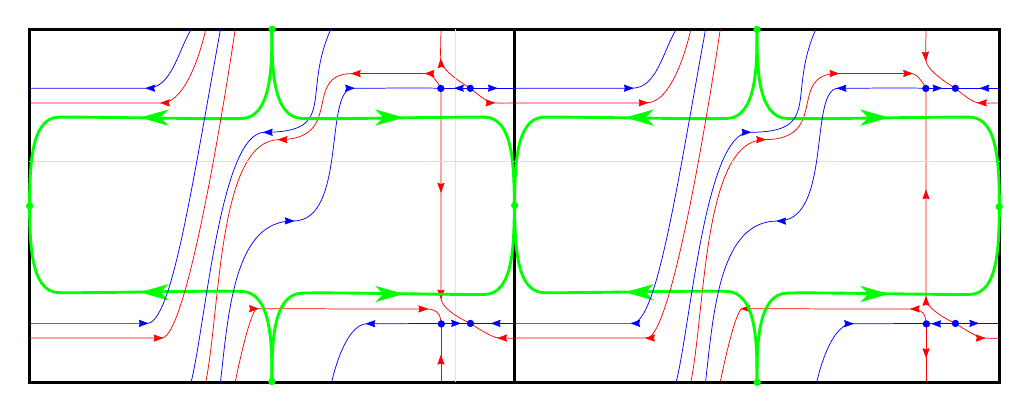}
	\caption{Morse flows and big disks on the conormal of the trefoil.
 }
	\label{fig:trefoilflow}
\end{figure}

We now proceed to the disks that contribute to $H(b_{12})$, $H(c_{11})$, $H(c_{21})$, and $H(c_{22})$. Most of these disks have a single positive puncture at $b_{12}$ etc., and these can be enumerated as in the explicit calculation of knot contact homology in \cite{EENS}. It is then also straightforward to enumerate the relevant disks with two positive punctures in a similar fashion, and we will list these below without proof. As with the Hopf link calculation in Section~\ref{sec:Hopfrec}, the focus of our calculation is on the powers of $q$ associated to all of these disks.

First consider the order $a^0$ part of $H(b_{12})$ (that is, total order $0$ in $a_{12}$ and $a_{21}$).
This counts disks with a single positive puncture at $b_{12}$. There are two such disks, each given by Morse flow strips, one with negative puncture at $a_{12}$, and one with negative puncture at $a_{21}$. These are precisely the two terms contributing to the differential of $b_{12}$ in Legendrian contact homology; their homology classes are easily read off, and these give the desired expression for $H(b_{12})$, with no powers of $q$ appearing.

\begin{figure}
\labellist
\small\hair 2pt
\pinlabel $\lambda\mu$ at 131 252
\pinlabel $\lambda$ at 426 252
\pinlabel ${Q \partial_{a_{12}}}$ at 131 126
\pinlabel ${Q \partial_{a_{12}}}$ at 426 126
\pinlabel $\mu \partial_{a_{12}}$ at 131 4
\pinlabel ${Q \d^2_{a_{12}} \d_{a_{21}}}$ at 426 4
\endlabellist
	\centering
	\includegraphics[width=\linewidth]{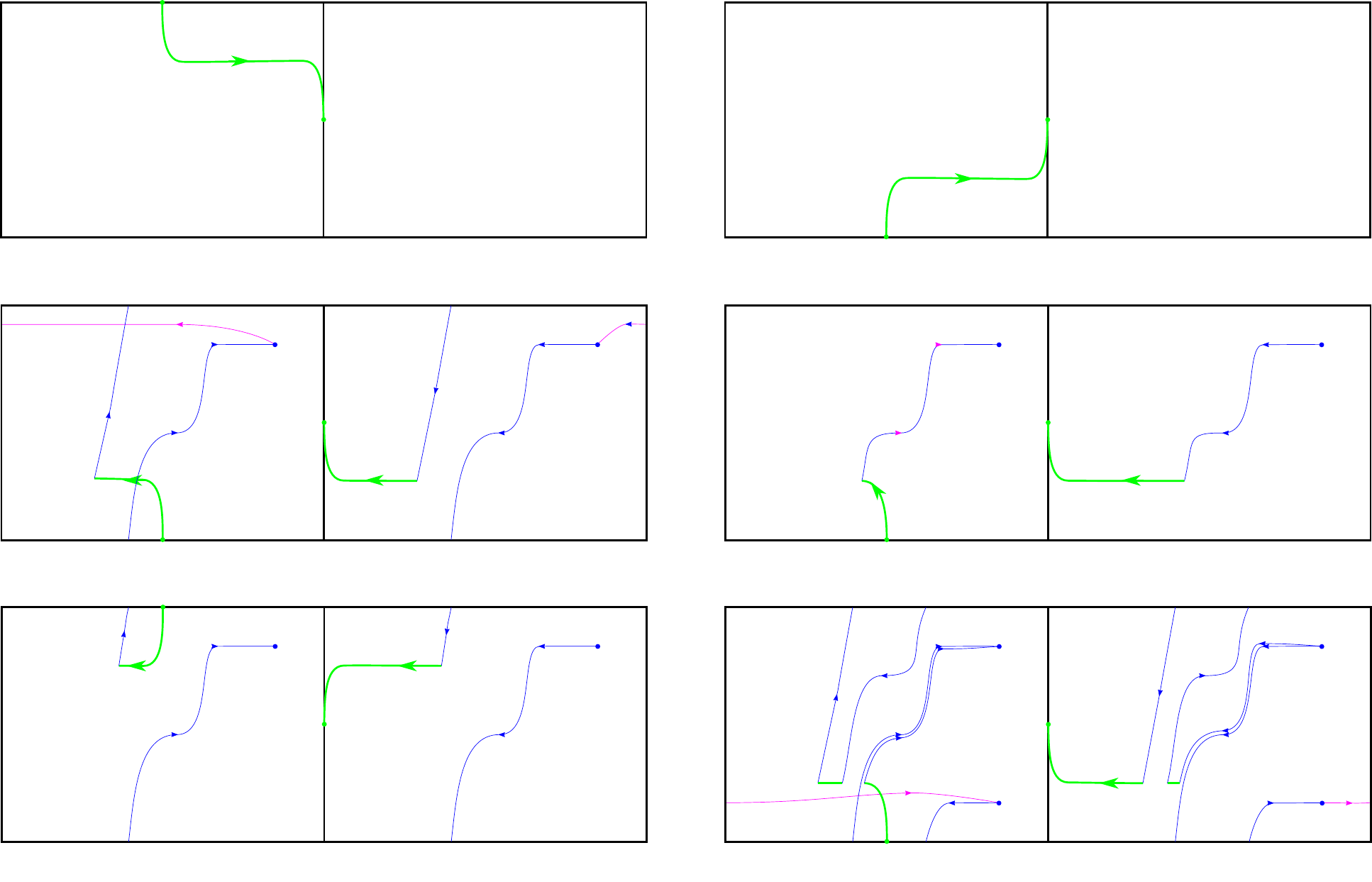}
	\caption{Geometry of coefficients for $H(c_{11})$.}
	\label{fig:H11}
\end{figure}

\begin{figure}
	\centering
	\includegraphics[width=0.8\linewidth]{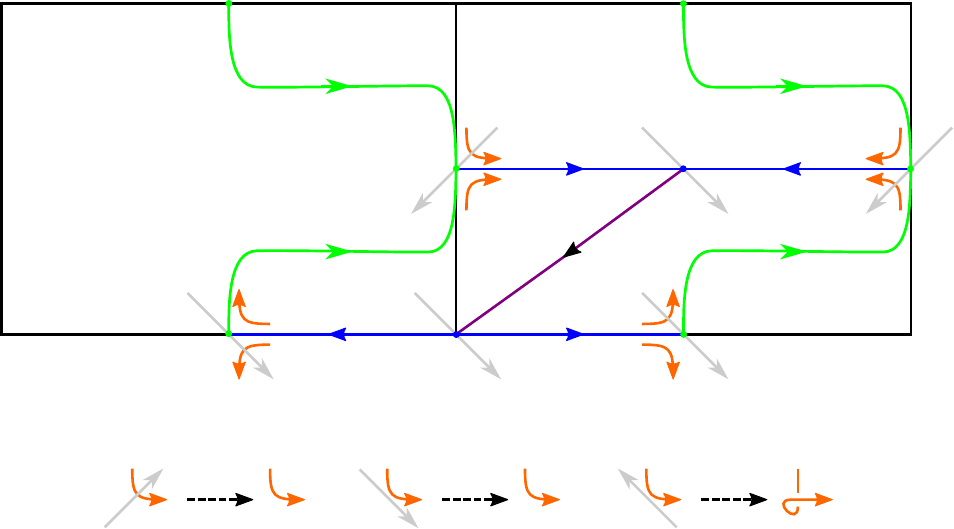}
	\caption{Comparing disks.}
	\label{fig:comparing}
\end{figure}

Next we look at the order $a^0$ part of $H(c_{11})$. There are two disks with positive puncture at $c_{11}$ and no negative punctures, shown in the top row of Figure~\ref{fig:H11}. There are no self-intersections of the boundary of these disks, but the second one comes with a factor of $q^{-1}$. This factor is related to our additional geometric data. To see it we consider the two components of the boundary of the moduli space of disks with positive puncture are the degree $2$ chord $u$ shown in Figure \ref{fig:comparing}. Recall that we need to perturb the gradient $\nabla f$ from the vertical near the Reeb chord endpoints. The arrows in the picture indicate such perturbations. Note that some of the disks near the boundary of the moduli space have tangencies with the perturbed vector field. We choose the perturbations so that negative tangencies give an intersection point with the $4$-chain $C_{T}$ and positive tangencies do not, compare Figure \ref{fig:realcapcross}. Since the algebraic number of points in the boundary of a $1$-dimensional moduli space equals $0$, we find when gluing the strip that the disk contributing to $\lambda$ in $H(c_{11})$ should have an additional $q^{-1}$ factor. Overall these two disks give the terms $\lambda\mu-q^{-1}\lambda$ in $H(c_{11})$.

There are $4$ more disks contributing to $H(c_{11})$ to order $a^0$, shown in the bottom two rows of Figure~\ref{fig:H11}. For the two that contribute $Q\d_{a_{12}}$, one (middle right in Figure~\ref{fig:H11}) has no crossing and hence no linking. For the other, we find two intersection points both contributing negatively to the linking number, combining to give $q^{-1}$: at the intersection with the capping path, the capping path passes under the other branch, while at the other intersection, the Morse flow line lies at the height of the point where it is attached, which here means that it passes over the big disk line. For the disk contributing $\mu\d_{a_{12}}$, there is no crossing and no linking. Finally, for the disk with three negative punctures contributing ${Q \d^2_{a_{12}} \d_{a_{21}}}$, the boundary in the flow tree limit is non-generic with respect to the Reeb flow; perturbing the flow lines, we would find two intersections and these cancel the capping path intersections, leading to no $q$ factor.

\begin{figure}
\labellist
\small\hair 2pt
\pinlabel $Q$ at 131 374
\pinlabel $\mu$ at 426 374
\pinlabel ${\lambda\mu \partial_{a_{21}}}$ at 131 250
\pinlabel ${Q \partial_{a_{12}} \partial_{a_{21}}}$ at 426 250
\pinlabel $\lambda a_{12}$ at 131 128
\pinlabel ${Q a_{12} \d_{a_{12}}}$ at 426 128
\pinlabel ${\lambda a_{12} \d_{a_{12}}}$ at 131 4
\pinlabel ${\lambda a_{12} \d_{a_{12}}}$ at 426 4
\endlabellist
	\centering
	\includegraphics[width=\linewidth]{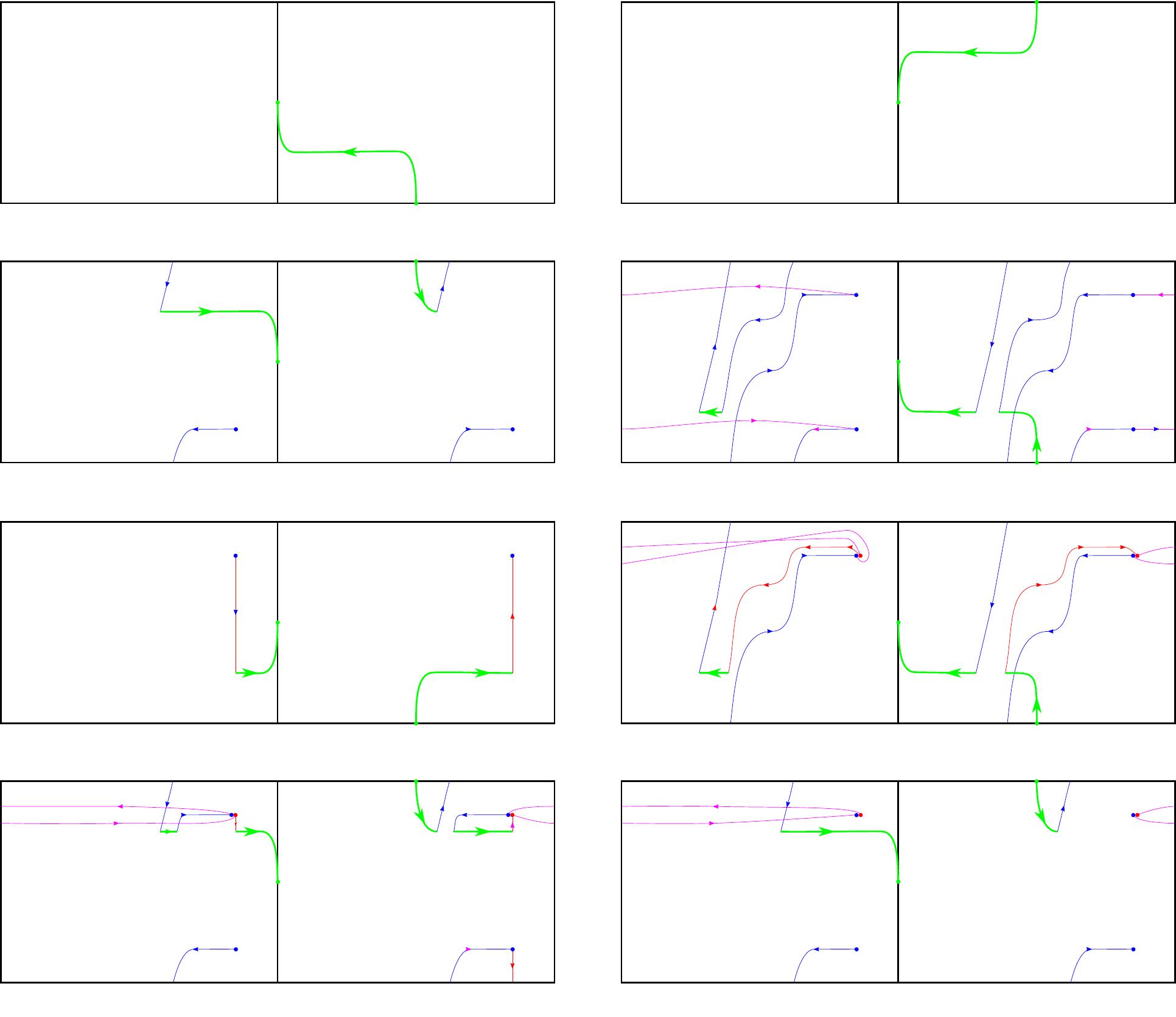}
	\caption{Geometry of coefficients for $H(c_{21})$.}
	\label{fig:H21}
\end{figure}

We next turn to $H(c_{21})$ up to order $a^1$. There are four disks with positive puncture at $c_{21}$ and no other positive puncture, shown in the top two rows of Figure~\ref{fig:H21}. For the first three, the boundaries of these disks do not self-intersect and so none of them has a $q$ factor. For the fourth disk contributing to $Q \d_{a_{12}} \d_{a_{21}}$, there are three crossings; the top two cancel and the lower crossing cancels with the contribution at the capping path at the positive end of $a_{21}$, compare Lemma \ref{l:capping}, resulting in no factor of $q$. Calculating orientation signs as determined in \cite{EENS} gives a total contribution of $Q - \mu + \lambda\mu \d_{a_{21}} + Q \d_{a_{12}} \d_{a_{21}}$ from these four disks, and this gives $H(c_{21})$ to order $a^0$.

The remaining contributions to $H(c_{21})$, of order $a^1$, come from disks with two positive punctures, one at $c_{21}$ and another at a degree $0$ Reeb chord. We refer to Remark~\ref{r:factorsforextrapositive} for the scheme to compute powers of $q$ for these disks. There are four such disks, shown in the bottom two rows of Figure~\ref{fig:H21}, and we treat these in order. For the first, contributing $\lambda a_{12}$, we have no crossing and the flow line enters $a_{12}$ from below, and the disk is counted with the factor $-(1-q^{-1})$. For the second, contributing $Qa_{12}\d_{a_{12}}$, there are three visible intersections, two of which cancel, and the flow line enters $a_{12}$ from above. Furthermore, the two copies of the capping path of $a_{12}$ project non-generically; after perturbation we get one extra intersection, and combined with the non-canceled intersection this gives gives a factor $q^{-1}$ and a total of $-q^{-1}(q-1)=q^{-1}-1$. The final two disks in the bottom row of Figure~\ref{fig:H21}, each of which contributes $\lambda a_{12} \d a_{12}$, cancel: the left one, an actual disk, is canceled by the right one, which is the formal disk given by joining a disk in class $\lambda$ with positive puncture at $c_{21}$ and no negative punctures to the trivial strip at $a_{12}$. (To see the latter contribution note the linking between the capping path of the trivial strip and the disk in class $\lambda$ with positive puncture at $c_{21}$.)

\begin{figure}
\labellist
\small\hair 2pt
\pinlabel $\mu$ at 141 272
\pinlabel $1$ at 455 272
\pinlabel ${Q \partial_{a_{21}}}$ at 141 140
\pinlabel ${\mu \partial_{a_{12}} \partial_{a_{21}}}$ at 455 140
\pinlabel ${Q a_{12}}$ at 141 8
\pinlabel ${\mu a_{12} \d_{a_{12}}}$ at 455 8
\endlabellist	\centering
	\includegraphics[width=\linewidth]{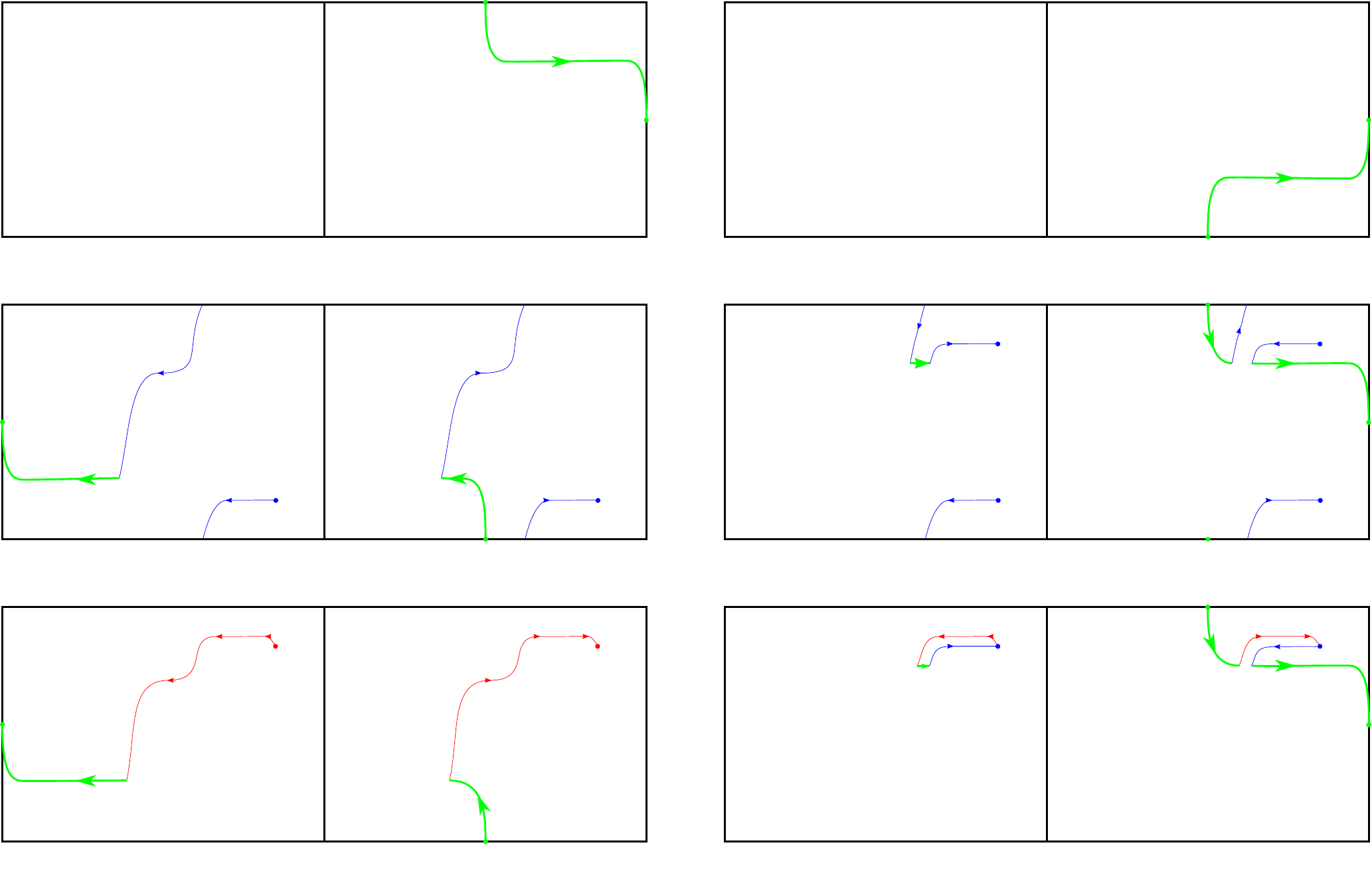}
	\caption{Geometry of coefficients for $H(c_{22})$.}
	\label{fig:H22}
\end{figure}

	Finally, we calculate $H(c_{22})$ up to order $a^1$. There are four disks with positive puncture at $c_{22}$ and no other positive puncture, shown in the first two rows of Figure~\ref{fig:H22}. The boundaries of these disks do not self-intersect and so none of them has a $q$ factor, and they give a total contribution of $\mu-1-Q \d_{a_{21}}+\mu \d_{a_{12}} \d_{a_{21}}$. The remaining contributions to $H(c_{22})$ are of order $a^1$ and are given by disks with positive punctures at $c_{22}$ and $a_{12}$. There are two of these, drawn in the bottom row of Figure~\ref{fig:H22}, and they each involve a flow line entering $a_{12}$ from above and without intersection. As with the analogous calculation for $H(c_{21})$, each of these counts with a factor of $q-1$, resulting in a total contribution of $(q-1)Q a_{12} + (q-1)\mu a_{12} \d_{a_{12}}$.
\end{proof}

\subsection{Extracting a quantized polynomial from the Hamiltonian}

In this subsection, we use the Hamiltonian as given in Proposition~\ref{prop:trefoilHam} to produce a $q$-deformed version of the augmentation polynomial of the trefoil, which we will then compare to the known HOMFLY-PT recurrence for the trefoil in Section~\ref{ssec:HOMFLY}.
We first describe how to extract the augmentation polynomial for the trefoil from the differential in knot contact homology; this is the ``classical limit'' of our main computation in this subsection. The relevant portion of the knot contact homology differential (which only counts holomorphic disks with a single positive puncture) can be read off from Proposition~\ref{prop:trefoilHam}, by discarding all $\mathcal{O}(a)$ terms (these represent curves with multiple positive punctures). Algebraically, this corresponds to letting $g_{s}=0$ and $q=e^{g_{s}}=1$, and replacing the operators $\d_{a_{12}}$ and $\d_{a_{21}}$ by the usual DGA generators that we write $a_{12}$ and $a_{21}$, respectively. This gives
\begin{equation}
\begin{aligned}
d(b_{12}) &= \lambda^{-1} a_{12} - a_{21} \\
d(c_{11}) &= \lambda\mu-\lambda-(2Q-\mu) a_{12} - Q a_{12}^2 a_{21} \\
d(c_{21}) &= Q-\mu+\lambda\mu a_{21} + Q a_{12}a_{21} \\
d(c_{22}) &= \mu-1-Q a_{21}+\mu a_{12}a_{21}.
\end{aligned} \label{eq:dgadiffltrefoil}
\end{equation}
Here for simplicity we have passed to the abelianized quotient of the usual Legendrian DGA, in which Reeb chords commute with each other; this quotient suffices for considering augmentations.

To calculate the augmentation polynomial, we want to find the set of $\lambda,\mu,Q$ so that there is some choice of $a_{12},a_{21}$ for all four polynomials in \eqref{eq:dgadiffltrefoil} vanish. This calculation is an exercise in elimination theory. First note that
\[
d(c_{11}) + a_{12} d(c_{21}) - \lambda d(c_{22}) + Q \lambda d(b_{12}) = 0
\]
and so the relation given by $d(c_{11})=0$ is redundant. Using the other differentials, we define
\begin{align*}
A &= \mu d(c_{21}) - Q d(c_{22}) = (Q-\mu^2)+(Q^2+\lambda\mu^2)a_{21} \\
B &= Q d(c_{21}) + \lambda\mu d(c_{22}) = (Q^2+\lambda\mu^2-Q\mu-\lambda\mu)+(Q^2+\lambda\mu^2)a_{12}a_{21} \\
C &= a_{12}A = (Q-\mu^2)a_{12}+(Q^2+\lambda\mu^2)a_{12}a_{21} \\
D &= C - \lambda(Q-\mu^2)d(b_{12}) = \lambda(Q-\mu^2)a_{21}+(Q^2+\lambda\mu^2)a_{12}a_{21}\\
E &= D-B =  -(Q^2+\lambda\mu^2-Q\mu-\lambda\mu)+\lambda(Q-\mu^2)a_{21}.
\end{align*}
Now $A,E$ are linear in $a_{21}$ and we can eliminate $a_{21}$:
\[
\lambda(Q-\mu^2)A-(Q^2+\lambda\mu^2)E = \Aug_T(\lambda,\mu,Q),
\]
where
$\Aug_T$ is the augmentation polynomial of the trefoil:
\[
\Aug_T(\lambda,\mu,Q) = (\mu^4-\mu^3)\lambda^2+(\mu^4-Q\mu^3+2Q^2\mu^2-2Q\mu^2-Q^2\mu+Q^2)\lambda+(Q^4-Q^3\mu).
\]
(Note that this is unnormalized; to obtain the augmentation polynomial for the trefoil as given in \cite{Ngsurvey}, replace
$\lambda,\mu,Q$ by $\lambda\mu^3U^{-1},\mu^{-1},U^{-1}$ respectively.)

We now replicate this calculation non-classically. First, we have
\[
H(c_{11}) +  \d_{a_{12}} H(c_{21}) - \lambda H(c_{22}) + Q \lambda H(b_{12}) = \mathcal{O}(a)
\]
and so $H(c_{11})$ is redundant as before. 
Next, define
\begin{align*}
A &= \mu H(c_{21})-Q H(c_{22}) \\
&= (Q-\mu^2)+(Q^2+q\lambda\mu^2) \d_{a_{21}} + \mathcal{O}(a) \\
B &= Q H(c_{21})+ q\lambda\mu H(c_{22}) \\
&= Q(Q-\mu)+q\lambda\mu(\mu-1)+(1-q)Q\lambda\mu\d_{a_{21}} \\
&\qquad + (Q^2+q\lambda\mu^2)\d_{a_{12}} \d_{a_{21}} + \mathcal{O}(a) \\
C &= \d_{a_{12}} A \\
&= (1-q)\lambda\mu-(q-1)Q^2
+(Q+(q^{-1}-q)Q \mu-\mu^2) \d_{a_{12}} \\
&\qquad
+ (Q^2+q\lambda\mu^2) \d_{a_{12}} \d_{a_{21}} + \mathcal{O}(a).
\end{align*}
In the expression for $C$, for the terms linear in $\d_{a_{12}}$, we replace $\d_{a_{12}}$ by $\lambda \d_{a_{21}}$ by gluing to $\lambda H(b_{12}) = \d_{a_{12}} - \lambda  \d_{a_{21}}$. In doing this, we need to move the capping path for $\lambda$ to the front of the expression, which will introduce a factor of $q^{-1}$ to certain terms in $C = \d_{a_{12}}(\mu H(c_{21})-Q H(c_{22}))$. More precisely, the terms in $C$ that are linear in $\d_{a_{12}}$ are:
\[
(\mu(Q-\mu) +(q^{-1}-1)Q\mu- Q(\mu-1)-(q-1)Q\mu) \d_{a_{12}},
\]
and the terms that are multiplied by $q^{-1}$ are $-\mu^2$, $-Q\mu$, and $-(q-1)Q\mu$, corresponding to the terms $-\mu$ in $H(c_{21})$, $\mu$ in $H(c_{22})$, and $(q-1)\mu a_{12} \d_{a_{12}}$ in $H(c_{22})$, respectively. Thus gluing to $\lambda H(b_{12})$ yields
\begin{align*}
(\mu(Q-q^{-1}\mu) &+(q^{-1}-1)Q\mu- Q(q^{-1}\mu-1)-q^{-1}(q-1)Q\mu) \lambda \d_{a_{21}} \\
&= (Q\lambda+(1-q)Q\lambda\mu-q\lambda\mu^2) \d_{a_{21}}
\end{align*}
and the full relation coming from $C$ is:
\begin{align*}
D &=  (1-q)\lambda\mu-(q-1)Q^2
+(Q\lambda+(1-q)Q\lambda\mu-q\lambda\mu^2) \d_{a_{21}} \\
&\qquad
+ (Q^2+q\lambda\mu^2) \d_{a_{12}} \d_{a_{21}} + \mathcal{O}(a).
\end{align*}
Subtracting $B$ from $D$ gives
\[
E = D-B = 
(-q Q^2+Q\mu+\lambda\mu-q\lambda\mu^2)+(Q-q^{-1}\mu^2)\lambda \d_{a_{21}} + \mathcal{O}(a).
\]

We now want to combine $A$ and $E$ to eliminate the $\d_{a_{21}}$ terms. Note that we have the following identity, which can be verified by breaking the left hand side into a sum of two terms, one for $Q^2$ and one for $q^{-1}\lambda\mu^2$, and then moving these monomials to the left:
\begin{align*}
&(Q-q^{-1}\mu^2)(Q-q^{-3} \mu^2)(Q^2+q^{-1}\lambda\mu^2)\\
&= (Q^2(Q-q^{-3}\mu^2)+q^{-1}\lambda\mu^2(Q-q\mu^2))(Q-q^{-1}\mu^2).
\end{align*}
Thus the following linear combination of $A$ and $E$ eliminates the $\d_{a_{21}}$ terms:
\begin{align*}
&(Q-q^{-1}\mu^2)(Q-q^{-3} \mu^2)\lambda A - (Q^2(Q-q^{-3}\mu^2)+q^{-1}\lambda\mu^2(Q-q\mu^2)) E \\
& \qquad =
(Q-q^{-1}\mu^2)(Q-q^{-3} \mu^2)\lambda(Q-\mu^2)\\
&\qquad \quad - (Q^2(Q-q^{-3}\mu^2)+q^{-1}\lambda\mu^2(Q-q\mu^2))
(-q Q^2+Q\mu+\lambda\mu-q\lambda\mu^2) + \mathcal{O}(a).
 \end{align*}

This last expression (without $\mathcal{O}(a)$) is the $q$-deformed augmentation polynomial for the trefoil, in the framing given by the braid $\sigma_1^3$. We write this as
$\qAug_T^3(\lambda,\mu,Q,q)$, where $3$ denotes the framing number corresponding to $\sigma_1^3$.
To convert to the polynomial for framing number $0$, we use the following result; compare Theorem~\ref{t:framing}.

\begin{proposition}
Let $\qAug_K^f(\lambda,\mu,Q,q) = \sum_{i=0}^k p_i(\mu,Q,q)\lambda^i$ denote the quantized augmentation polynomial for a knot $K$ in framing $f$. Then
\[
\qAug_K^{f-f'}(\lambda,\mu,Q,q) = \sum_{i=0}^k q^{f'i^2/2} p_i(\mu,Q,q)\mu^{-f'i}\lambda^i;
\]
that is, to lower the framing by $f'$, replace each $\lambda^i$ by $q^{f'i^2/2}\mu^{-f'i} \lambda^i$.
\label{prop:framingchange}
\end{proposition}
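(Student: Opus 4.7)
The plan is to derive the framing change at the quantum level via a symplectic change of coordinates on $H_1(\Lambda_K)$, reducing the statement to Theorem~\ref{t:framing} through a Baker--Campbell--Hausdorff calculation.

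First I would set up the coordinate change. Lowering the framing by $f'$ corresponds to the transformation $(x,p)\mapsto(x',p')=(x+f'p,p)$ (the $r=f'$ case of Theorem~\ref{t:framing}), so classically $\lambda=e^{x}=e^{x'-f'p}=\lambda'\mu^{-f'}$ and $\mu'=\mu$. At $q=1$, the asserted formula is just the classical substitution $\lambda^{i}\mapsto\lambda'^{i}\mu^{-f'i}$ applied to $\qAug_{K}^{f}$, so the entire content of the proposition is the $q^{f'i^{2}/2}$ correction.

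Second I would derive this correction by quantizing the substitution. Writing $\hat\lambda^{i}=e^{i\hat x}=e^{i\hat x'-if'\hat p}$ and applying BCH for the Heisenberg pair $(\hat x',\hat p)$ underlying $\hat\mu\hat\lambda'=q\hat\lambda'\hat\mu$, one gets $e^{i\hat x'-if'\hat p}=q^{-f'i^{2}/2}\hat\lambda'^{i}\hat\mu^{-f'i}$. Normal-ordering the $\hat\mu$-factors to the left via $\hat\lambda'^{i}\hat\mu^{-f'i}=q^{f'i^{2}}\hat\mu^{-f'i}\hat\lambda'^{i}$ then gives
\[
\hat\lambda^{i}=q^{f'i^{2}/2}\hat\mu^{-f'i}\hat\lambda'^{i}.
\]
Substituting into $\hat A^{f}=\sum_{i}p_{i}(\hat\mu,Q,q)\hat\lambda^{i}$ and using that $p_{i}(\hat\mu)$ commutes with $\hat\mu^{-f'i}$ produces exactly the right-hand side of the proposition.

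Third I would check that this operator annihilates the correct wave function. By Theorem~\ref{t:framing} the coefficients of $\Psi_{K}^{f-f'}(x')$ differ from those of $\Psi_{K}^{f}(x)$ by a Gauss-sum factor encoding the self-linking change, while the BCH reordering above re-expresses $\hat A^{f}$ in the new variables $(\hat\lambda',\hat\mu)$. The self-linking Gauss sum and the BCH correction arise from the same geometric source (the Heisenberg commutator induced by the symplectic pairing on $H_{1}(\Lambda_{K})$), so the identity $\hat A^{f}\Psi_{K}^{f}=0$ transports to $\hat A^{f-f'}\Psi_{K}^{f-f'}=0$, identifying the substituted operator with the recursion in the new framing.

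The main obstacle will be bookkeeping of the half-integer powers of $q$ and signs. The factor $q^{f'i^{2}/2}$ must be matched precisely against the self-linking factor in the proof of Theorem~\ref{t:framing}, which in the stated form reads $e^{n^{2}rg_{s}}$ rather than $e^{n^{2}rg_{s}/2}$; reconciling these requires either a symmetric convention for the quantization (Weyl ordering) or a careful accounting of which self-linkings are counted once versus twice, exactly the sort of convention issue the authors flag as a place where their treatment is sketchy.
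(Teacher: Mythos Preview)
Your approach is essentially the paper's: the paper's proof is just three sentences stating that $\qAug_K^f$ annihilates $\Psi_K^f$, that Theorem~\ref{t:framing} gives the wave-function transformation under framing change, and that the operator transforms accordingly. Your BCH computation makes explicit the ``corresponding transformation of operators'' that the paper leaves implicit, and your step~3 is exactly the paper's logic. Your observation about the factor of two is apt: the $q^{f'i^2/2}$ in the proposition and the $e^{n^2 r g_s}$ in Theorem~\ref{t:framing} are not obviously consistent, and the paper does not address this---it is precisely the kind of convention issue the authors repeatedly flag as not fully nailed down.
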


\begin{proof}
The quantized augmentation polynomial in framing $f$ annihilates the wave function $\Psi_{K}^{f}$ in framing $f$. Theorem \ref{t:framing} shows how the wave function transforms as we change the framing. The tranformation rule for $\qAug_{K}$ is the corresponding transformation of operators. 
\end{proof}

Using Proposition~\ref{prop:framingchange} and the above expression for $\qAug_T^3(\lambda,\mu,Q,q)$, and multiplying on the left by $\mu^3$ to clear denominators, we finally arrive at $\qAug_T = \qAug_T^0$, the quantized augmentation polynomial for the right-handed trefoil in the $0$ framing:
\begin{align*}
\qAug_T(\lambda,\mu,Q,q) &= 
q Q^3 \mu^3(Q-q^{-3}\mu^2)(Q-q^{-1}\mu) \\
&\quad + q^{-5/2}(Q-q^{-2}\mu^2)\left((q^2 \mu^2+q^3 \mu^2-q^3 \mu+q^4)Q^2 \right.\\
&\qquad 
\left.-(q\mu^3+q^3\mu^2+q\mu^2)Q+\mu^4\right)  \lambda \\
&\quad +(Q-q^{-1}\mu^2)(\mu-q)\lambda^2.
\end{align*}

\subsection{Colored HOMFLY-PT recurrence for the trefoil}
\label{ssec:HOMFLY}

We now compare $\qAug_T$ with the recurrence relation for the colored HOMFLY-PT polynomials of the trefoil. From \cite[\S 1.4]{GLL}, this is:
\begin{align*}
P_T(L,M,x,q)&=x^4(x^2M^2-1)(q^6x^2M^4-1) L^0 \\
&\quad +q^7\left(q^4x^2M^4-1)(q^8x^4M^8-q^4x^4M^6+q^2x^4M^4+x^4M^4\right. \\
& \qquad \left.-q^6x^2M^4-q^2x^2M^4-x^2M^2+1\right) L^1 \\
&\quad -q^{18}x^2M^6(q^4M^2-1)(q^2x^2M^4-1) L^2.
\end{align*}
For convenience, we recall from \cite{GLL} what it means for $P_T$ to be the recurrence relation. Let $W_{T;n}(x,q)$ denote the colored HOMFLY-PT polynomial for $T$ colored by the partition with a single row and $n$ boxes. Define operators $M,L$ on the set of all functions from $\mathbb{N}$ to $\mathbb{Q}(x,q)$ by
\[
(Lf)(n) = f(n+1), \qquad (Mf)(n) = q^n f(n).
\]
Then for the function $f$ given by $f(n) = W_{T;n}(x,q)$, we have
\[
P_T f = 0.
\]
Note that $P_T$ is an element of a Weyl algebra, namely the algebra over $\mathbb{Q}(q,x)$ generated by $L$ and $M$, with $L M = q M L$.

\begin{remark}
We can compare the recurrence relation $P_T(L,M,x,q)$ to the expression given in \cite{FGS} for the recurrence relation of the left-handed trefoil. If we start with the expression in \cite[(2.22)]{FGS}, and then set $t=-1$ (this corresponds to taking Euler characteristic in the homological grading) and replace $\widehat{x}$ and $\widehat{y}$ from \cite{FGS} by $M$ and $L$, respectively, we get the following recurrence for the left-handed trefoil:
\begin{align*}
&-a^2q M^3 (M-1)(a M^2 q-1) L^0 \\
&+a(aM^2-1)(-aM^2+q+a^2M^4q-Mq^2+M^2q^2-aM^2q^2-aM^3q^2+M^2q^3)L^1 \\
&+q(aM-1)(aM^2-q)L^2.
\end{align*}
Replacing $a \mapsto a^{-1}$, $q \mapsto q^{-1}$, $M \mapsto M^{-1}$ to pass to the mirror knot, we find precisely the polynomial $P_T(L,M,x,q)$ given above.
\end{remark}

We are now in a position to compare the recurrence $P_T(L,M,x,q)$ with the quantized augmentation polynomial $\qAug_T(\lambda,\mu,Q,q)$ from the previous subsection. The following result shows that they agree, and is proven by simply comparing the explicit equations for $P_T$ and $\qAug_T$.

\begin{proposition}
For the right-handed trefoil, the polynomials $P_T(L,M,x,q)$ and $\qAug_T(\lambda,\mu,Q,q)$ agree after a change of variables, up to an overall unit:
\[
P_T(q^{-1}Q^{-1}\lambda,q^{-1/2}\mu^{-1/2},qQ^{1/2},q^{1/2}) = q^7Q^{-1}\mu^{-6} \qAug_T(\lambda,\mu,Q,q).
\]
\end{proposition}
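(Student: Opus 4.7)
The proposition asserts an explicit identity between two two-term polynomials in $L$ (resp.\ $\lambda$), so the plan is purely a verification of this identity by substitution and coefficient matching. Both $P_T(L,M,x,q)$ and $\qAug_T(\lambda,\mu,Q,q)$ are polynomials of degree $2$ in the multiplication variable, so the comparison reduces to matching three coefficients (the $L^0$, $L^1$, $L^2$ terms).

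First I would check that the change of variables is compatible with the non-commutative structure, so that the substitution makes sense as an equation in an operator algebra rather than just a polynomial ring. From the paper we have $\mu\lambda = q\lambda\mu$, and in the HOMFLY Weyl algebra $LM = qML$. Using $\mu^{-1}\lambda = q^{-1}\lambda\mu^{-1}$ one computes $L M = q^{-3/2}Q^{-1}\lambda\mu^{-1/2}$ and $M L = q^{-2}Q^{-1}\lambda\mu^{-1/2}$, so $LM = q^{1/2}\cdot ML$. This matches the prescription that the ``old'' $q$ in $P_T$ becomes $q^{1/2}$ under the substitution, so the Weyl relation is preserved and all subsequent computations can be done by treating the variables as commuting up to these explicit $q$-factors (which I would standardize by moving all $\mu^{\pm 1/2}$'s to the right of every $\lambda$).

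Next I would perform the substitution $L\mapsto q^{-1}Q^{-1}\lambda$, $M\mapsto q^{-1/2}\mu^{-1/2}$, $x\mapsto qQ^{1/2}$, $q\mapsto q^{1/2}$ into each of the three explicit coefficients of $P_T$ listed in Section~\ref{ssec:HOMFLY}, expand $x^iM^j$ as an integer monomial in $q, Q, \mu$, and clear the overall factor $q^7Q^{-1}\mu^{-6}$. For the $L^0$ coefficient the substitution gives
\[
(qQ^{1/2})^4(q^{-1}Q\mu^{-1}-1)(q^3\cdot q^{-2}Q^2\mu^{-2}-1) = q^4Q^2\mu^{-2}(Q-q\mu)(Q-q\mu^2),
\]
which, after multiplying by $q^{-7}Q\mu^6$ to pull out the proposed unit, should reproduce (after the renormalization $q\mapsto q$ that reverses the chirality bookkeeping inside $\qAug_T$) the constant term $qQ^3\mu^3(Q-q^{-3}\mu^2)(Q-q^{-1}\mu)$ of $\qAug_T$ up to the unit. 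I would perform the analogous reduction for the $L^1$ and $L^2$ coefficients and check that one lands on the middle and top coefficients of $\qAug_T$ exactly.

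The main obstacle is not conceptual but bookkeeping: tracking half-integer powers of $q$ correctly when each $M^j$ becomes $q^{-j/2}\mu^{-j/2}$, handling the chirality (the recurrence in~\cite{GLL} is for the right-handed trefoil while Feigin--Gukov--Shakirov conventions differ by $q\to q^{-1}, M\to M^{-1}$, cf.\ the remark preceding the proposition), and ensuring the $L^1$ coefficient---an eight-term polynomial in $x^2M^4$ and $x^4M^8$---collapses to the four-term Laurent polynomial in $Q,\mu$ appearing in $\qAug_T$. I would organize this by factoring out $(Q-q^{-2}\mu^2)$ from the $L^1$ term of $P_T$ after substitution, matching the explicit $(Q-q^{-2}\mu^2)$ factor visible in $\qAug_T$, and then verifying the residual quadratic-in-$Q$ polynomial coincides term by term. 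Because both sides are completely explicit, no further geometric input is needed and the result is a routine (if tedious) polynomial identity.
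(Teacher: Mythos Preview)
Your approach is correct and matches the paper's: the paper's proof simply states that the identity ``is proven by simply comparing the explicit equations for $P_T$ and $\qAug_T$'', which is exactly the coefficient-by-coefficient verification you outline, together with the Weyl-algebra compatibility check. Your illustrative $L^0$ computation contains minor arithmetic slips (for instance $x^2M^2 \mapsto qQ\mu^{-1}$, not $q^{-1}Q\mu^{-1}$, and the final factored form should be $q^8 Q^2\mu^{-3}(Q-q^{-1}\mu)(Q-q^{-3}\mu^2)$), but the strategy is sound and once the arithmetic is done carefully all three coefficients match.
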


We note that the commutation relations $L M = q M L$ and $\mu\lambda=q\lambda\mu$ are compatible with each other under this change of variables.

\bibliographystyle{hplain}
\bibliography{myrefsqc}

\begin{thebibliography}{10}

\bibitem{AENV}
Mina Aganagic, Tobias Ekholm, Lenhard Ng, and Cumrun Vafa.
\newblock Topological strings, {D}-model, and knot contact homology.
\newblock {\em Adv. Theor. Math. Phys.}, 18(4):827--956, 2014.

\bibitem{AV}
Mina Aganagic and Cumrun Vafa.
\newblock {Large N Duality, Mirror Symmetry, and a Q-deformed A-polynomial for
  Knots}.
\newblock 2012, 1204.4709.

\bibitem{remodelB}
Vincent Bouchard, Albrecht Klemm, Marcos Mari\~no, and Sara Pasquetti.
\newblock Remodeling the {B}-model.
\newblock {\em Comm. Math. Phys.}, 287(1):117--178, 2009.

\bibitem{CELN}
Kai Cieliebak, Tobias Ekholm, Janko Latschev, and Lenhard Ng.
\newblock Knot contact homology, string topology, and the cord algebra.
\newblock {\em J. \'Ec. polytech. Math.}, 4:661--780, 2017.

\bibitem{shade}
T.~Ekholm.
\newblock The complex shade of a real space, and its applications.
\newblock {\em Algebra i Analiz}, 14(2):56--91, 2002.

\bibitem{Ekholm_rsft}
Tobias Ekholm.
\newblock Rational symplectic field theory over {$\mathbb{Z}\sb 2$} for exact
  {L}agrangian cobordisms.
\newblock {\em J. Eur. Math. Soc. (JEMS)}, 10(3):641--704, 2008.

\bibitem{Ekholmoverview}
Tobias Ekholm.
\newblock Notes on topological strings and knot contact homology.
\newblock In {\em Proceedings of the {G}\"okova {G}eometry-{T}opology
  {C}onference 2013}, pages 1--32. G\"okova Geometry/Topology Conference (GGT),
  G\"okova, 2014.

\bibitem{EENS}
Tobias Ekholm, John~B. Etnyre, Lenhard Ng, and Michael~G. Sullivan.
\newblock Knot contact homology.
\newblock {\em Geom. Topol.}, 17(2):975--1112, 2013.

\bibitem{ESh}
Tobias Ekholm and Vivek Shende.
\newblock Skeins on branes.
\newblock arXiv:1901.08027.

\bibitem{EGH}
Y.~Eliashberg, A.~Givental, and H.~Hofer.
\newblock Introduction to symplectic field theory.
\newblock {\em Geom. Funct. Anal.}, (Special Volume, Part II):560--673, 2000,
  math.SG/0010059.
\newblock GAFA 2000 (Tel Aviv, 1999).

\bibitem{eynardorantin}
B.~Eynard and N.~Orantin.
\newblock Invariants of algebraic curves and topological expansion.
\newblock {\em Commun. Number Theory Phys.}, 1(2):347--452, 2007.

\bibitem{FGS}
Hiroyuki Fuji, Sergei Gukov, and Piotr Su{\l}kowski.
\newblock Super-{$A$}-polynomial for knots and {BPS} states.
\newblock {\em Nuclear Phys. B}, 867(2):506--546, 2013.

\bibitem{FO3}
Kenji Fukaya, Yong-Geun Oh, Hiroshi Ohta, and Kaoru Ono.
\newblock {\em Lagrangian intersection {F}loer theory: anomaly and obstruction.
  {P}art {I}}, volume~46 of {\em AMS/IP Studies in Advanced Mathematics}.
\newblock American Mathematical Society, Providence, RI, 2009.

\bibitem{GLL}
Stavros Garoufalidis, Aaron Lauda, and Thang L\^e.
\newblock The colored {HOMFLYPT} function is $q$-holonomic.
\newblock arXiv:1604.08502.

\bibitem{hofer1}
H.~Hofer.
\newblock A general {F}redholm theory and applications.
\newblock In {\em Current developments in mathematics, 2004}, pages 1--71. Int.
  Press, Somerville, MA, 2006.

\bibitem{hofer2}
Helmut H.~W. Hofer.
\newblock Polyfolds and {F}redholm theory.
\newblock In {\em Lectures on geometry}, Clay Lect. Notes, pages 87--158.
  Oxford Univ. Press, Oxford, 2017.

\bibitem{iacovino3}
Vito Iacovino.
\newblock {Frame ambiguity in Open Gromov-Witten invariants}.
\newblock arXiv:1003.4684.

\bibitem{iacovino4}
Vito Iacovino.
\newblock {Obstruction classes and Open Gromov-Witten invariants}.
\newblock arXiv:1108.3313.

\bibitem{iacovino1}
Vito Iacovino.
\newblock {Open Gromov-Witten theory on Calabi-Yau three-folds I}.
\newblock arXiv:0907.5225.

\bibitem{iacovino2}
Vito Iacovino.
\newblock {Open Gromov-Witten theory on Calabi-Yau three-folds II}.
\newblock arXiv:0908.0393.

\bibitem{koshkin}
Sergiy Koshkin.
\newblock Conormal bundles to knots and the {G}opakumar-{V}afa conjecture.
\newblock {\em Adv. Theor. Math. Phys.}, 11(4):591--634, 2007.

\bibitem{Marino}
Marcos Mari{\~n}o.
\newblock Chern-{S}imons theory and topological strings.
\newblock {\em Rev. Modern Phys.}, 77(2):675--720, 2005.

\bibitem{Ng_rstf}
Lenhard Ng.
\newblock Rational symplectic field theory for {L}egendrian knots.
\newblock {\em Invent. Math.}, 182(3):451--512, 2010.

\bibitem{Ngsurvey}
Lenhard Ng.
\newblock A topological introduction to knot contact homology.
\newblock In {\em Contact and symplectic topology}, volume~26 of {\em Bolyai
  Soc. Math. Stud.}, pages 485--530. J\'anos Bolyai Math. Soc., Budapest, 2014.

\bibitem{OV}
Hirosi Ooguri and Cumrun Vafa.
\newblock Knot invariants and topological strings.
\newblock {\em Nuclear Phys. B}, 577(3):419--438, 2000.

\bibitem{worldsheet}
Hirosi Ooguri and Cumrun Vafa.
\newblock Worldsheet derivation of a large {$N$} duality.
\newblock {\em Nuclear Phys. B}, 641(1-2):3--34, 2002.

\bibitem{pardon}
John Pardon.
\newblock An algebraic approach to virtual fundamental cycles on moduli spaces
  of pseudo-holomorphic curves.
\newblock {\em Geom. Topol.}, 20(2):779--1034, 2016.

\bibitem{viro}
Oleg Viro.
\newblock Encomplexing the writhe.
\newblock In {\em Topology, ergodic theory, real algebraic geometry}, volume
  202 of {\em Amer. Math. Soc. Transl. Ser. 2}, pages 241--256. Amer. Math.
  Soc., Providence, RI, 2001.

\bibitem{Witten:1988hf}
Edward Witten.
\newblock {Quantum field theory and the Jones polynomial}.
\newblock {\em Commun. Math. Phys.}, 121:351, 1989.

\end{thebibliography}

\end{document}